\newtheorem{theorem}{Theorem}[section]
\newtheorem{definition}{Definition} [section]
\newtheorem{corollary}{Corollary}[theorem]
\newtheorem{lemma}[theorem]{Lemma}
\newtheorem{proposition}[theorem]{Proposition}
\newtheorem{remark}[theorem]{Remark}
\newcounter{yuppo}
\newcommand{\Keler} {K\"{a}hler }
\newcommand{\keler} {K\"{a}hler }
\newcommand{\kelerian} {K\"{a}hlerian }
\newcommand{\End}{\operatorname{End}}
\newcommand{\cd}{\cdot}
\newcommand{\om}{\omega}
\renewcommand{\phi}{\varphi}
\newcommand{\ra}{\rightarrow}
\newcommand{\lra}{\longrightarrow}
\newcommand{\C}{\mathbb{C}}
\newcommand{\R}{\mathbb{R}}
\newcommand{\Gl}{\operatorname{Gl}}
\newcommand{\Ad}{\operatorname{Ad}}
\newcommand{\ga}{\gamma}
\newcommand{\enf}{\emph}
\newcommand{\desudt}[1][]{\dfrac {\mathrm {d} #1 }{\mathrm {dt}}}
\newcommand{\desudtzero}{\desudt \bigg \vert _{t=0} }
\newcommand{\zero}{\bigg \vert _{t=0} }
\newcommand{\liu}{\mathfrak{u}}
\newcommand{\lia}{\mathfrak{a}}
\newcommand{\liek}{\mathfrak{k}}
\newcommand{\lier}{\mathfrak{r}}
\newcommand{\lieg}{\mathfrak{g}}
\newcommand{\liep}{\mathfrak{p}}
\newcommand{\liez}{\mathfrak{z}}
\newcommand{\la}{\lambda}
\newcommand{\alfa}{\alpha}
\newcommand{\sx}{\langle} % scalar product
\newcommand{\xs}{\rangle}
\newcommand{\spam}{\operatorname{span}}
\newcommand{\liea}{\mathfrak{a}}
\newcommand{\noparty}[1]{}%{\colorbox{Apricot}{#1}}
\newcommand{\scalo}{\sx \cdot , \cdot \xs}
\newcommand{\mup}{\mu_\liep}
\newcommand{\mupb}{\mu_\liep^\beta}
\title{Stability, analytic stability for real reductive Lie groups }
\author{Leonardo Biliotti}
\address{Dipartimento di Scienze Matematiche, Fisiche e Informatiche \\
          Universit\`a di Parma (Italy)}
\email{leonardo.biliotti@unipr.it}
\author{Oluwagbenga Joshua Windare}
\address{Dipartimento di Scienze Matematiche, Fisiche e Informatiche \\
          Universit\`a di Parma (Italy)}
\email{oluwagbengajoshua.windare@unipr.it}
\keywords{Momentum map, Hilbert criterion, stability.}
\thanks{The first author was partially supported by PRIN  2017
   ``Real and Complex Manifolds: Topology, Geometry and holomorphic dynamics ''
   and ``National Group for Algebraic and Geometric Structures, and their Applications'' (GNSAGA - INDAM)}
\subjclass[2010]{53D20; 14L24.}
\begin{document}
\maketitle
\begin{abstract}
\noindent
We presented a systematic treatment of a Hilbert criterion for stability theory for an action of a real reductive group $G$ on a real submanifold $X$ of a \Keler manifold $Z$. More precisely, we suppose that the action of a compact Lie group with Lie algebra $\liu$ extends holomorphically to an action of the complexified group $U^\C$ and that the $U$-action on $Z$ is Hamiltonian. If  $G\subset U^\C$ is compatible, there is a corresponding gradient map $\mu_\mathfrak{p} : X\to \mathfrak{p}$, where $\lieg = \liek \oplus \liep$ is a Cartan decomposition of the Lie algebra of $G$. The concept of energy complete action of $G$ on $X$ is introduced. For such actions, one can characterize stability, semistability and polystability of a point by a numerical criteria using a $G$-equivariant function called maximal weight. We also prove the classical Hilbert-Mumford criteria for semistability and polystability conditions.
\end{abstract}
%\newpage
%\renewcommand{\contentsname}{Table of Contents}
%\tableofcontents
\section{Introduction}
\pagenumbering{arabic}
In this paper, we study the Hilbert Mumford criterion for semistability and polystability points associated with the actions of real reductive groups on real submanifolds of \Keler manifolds. In the classical case of a group action on a K\"ahler manifold these characterization are given by Mundet i Riera \cite{MUNDETC,MUNDETT}, Bruasse and Teleman \cite{BT}, Teleman \cite{Teleman}, Kapovich, Leeb
and Millson \cite{KLM} and probably many others, indeed any of these ideas go
back as far as Mumford \cite[\S 2.2]{MUMFORD}, where systematic presentation of notions of stability theory in the non-algebraic \kelerian geometry of complex reductive Lie groups and the relationships between these notions are given. Our aim is to investigate a class of actions of real reductive Lie groups on real submanifolds of a \keler manifold using gradient map techniques. More precisely, we consider a \Keler manifold $(Z,\omega)$ with an holomorphic action of a complex reductive Lie group $U^\C$, where $U^\C$ is the complexification of a compact  Lie group $U$ with Lie algebra $\mathfrak{u}$. We also assume $\omega$ is $U$-invariant and that there is a $U$-equivariant momentum map $\mu : Z \to \mathfrak{u}^*.$ By definition, for any $\xi \in \mathfrak{u}$ and $z\in Z,$ $d\mu^\xi = i_{\xi_Z}\omega,$ where $\mu^\xi(z) := \mu(z)( \xi )$ and $\xi_Z$ denotes the fundamental vector field induced on $Z$ by the action of $U,$ i.e.,
$$\xi_Z(z) := \frac{d}{dt}\zero \exp(t\xi)z
$$
(see, for example, \cite{Kirwan} for more details on the momentum map).
%Since $U$ is compact we may identify $\mathfrak{u}  \cong \mathfrak{u}^*$ by an
%$\mathrm{Ad}(U)$-invariant scalar product on $\mathfrak{u}$. Hence, we consider a momentum map as a $\mathfrak{u}$-valued map, i.e., $\mu : Z \to \mathfrak{u}$. 
Recently, the momentum map has been generalized to the following settings \cite{PG,heinzner-schwarz-stoetzel,heinzner-schuetzdeller}.

We say that a subgroup $G$ of $U^\C$ is compatible if $G$ is closed and the map $K\times \mathfrak{p} \to G,$ $(k,\beta) \mapsto k\exp(\beta)$
is a diffeomorpism where $K := G\cap U$ and $\mathfrak{p} := \mathfrak{g}\cap \textbf{i}\mathfrak{u};$ $\mathfrak{g}$ is the Lie algebra of $G$.
The Lie algebra $\mathfrak{u}^\C$ of $U^\C$ is the direct sum $\mathfrak{u}\oplus \textbf{i}\mathfrak{u}.$ It follows that $G$ is compatible with the
Cartan decomposition of $U^\C = U\exp(\textbf{i}\mathfrak{u})$, $K$ is a maximal compact subgroup of $G$ with Lie algebra $\mathfrak{k}$ and that
$\mathfrak{g} = \mathfrak{k}\oplus \mathfrak{p}$ (see Section \ref{comp-subgrous}). %The adjoint action of $K$ preserves $\mathfrak{p}.$
The inclusion $\textbf{i}\mathfrak{p}\hookrightarrow \mathfrak{u}$ induces by restriction, a $K$-equivariant map $\mu_{\textbf{i}\mathfrak{p}} : Z \to (\textbf{i}\mathfrak{p})^*.$ One can choose and fix $B$ an $\mathrm{Ad}(U^\C)$-invariant inner product of the Euclidian type on the Lie algebra $\liu^\C$, see \cite[Section 3.2]{BT}, \cite[Definition 3.2.4]{LT1} and also \cite[Section 2.1]{He} for the analogue in the algebraic GIT. Such an inner product will automatically induce a well defined inner product on any maximal compact subgroup $U'$ of $U^\C$. We thank the referee for pointing out these references.

Let $\scalo$ denote the real part $B$. Then $\scalo$ is positive definite on $\textbf{i}\liu$, negative definite on $\liu$, $\langle \liu, \textbf{i} \liu \rangle=0$ and finally the multiplication by $\textbf{i}$ satisfies $\langle \textbf i \cdot ,\textbf{i} \cdot \rangle=-\scalo$ (see Subsection \ref{sgm}).  In order to simplify the notation we replace consideration of $\mu_{\text{i}\mathfrak{p}}$ by that of $\mup:Z \lra \liep$, where
$$\mu_\mathfrak{p}^\beta (x):=\langle \mup(x),\beta \rangle:=\langle \textbf {i}\mu(x),\beta\rangle =-\langle \mu(x),-\textbf{i}\beta\rangle=\mu^{-\textbf{i} \beta}(x).$$
The map $\mup:Z \lra \liep$ is $K$-equivariant and grad$\, \mu_\mathfrak{p}^\beta = \beta_Z$ for any $\beta \in \liep$. Here the grad is computed with respect to the Riemannian metric induced by the \Keler structure. The map $\mu_\mathfrak{p}$ is called the $G$-gradient map associated with $\mu$ (see Section \ref{subsection-gradient-moment}).
For a $G$-stable locally closed real submanifold $X$ of $Z,$ we consider $\mu_\mathfrak{p}$ as a map $\mu_\mathfrak{p} : X\to \mathfrak{p}$ such that $\mathrm{grad}\, \mup=\beta_X$, where the gradient is now computed with respect to the induced Riemannian metric on $X$. The norm square of $\mu_\mathfrak{p}$ is defined as
$f(x) := \frac{1}{2}\parallel \mu_\mathfrak{p}(x)\parallel^2; \quad x \in X.$ A strategy for analyzing the $G$-action on $X$ is to view $f$ as a generalized Morse function. In \cite{heinzner-schwarz-stoetzel} the authors proved that associated to critical points of $f$, we have $G$-invariant submanifolds of $X$ called pre-strata and they studied its properties. If $X$ is compact then the pre-strata are the strata of a Morse type stratification of $X$.
 Semistable (polystable) points in $X$ can be identified by taking into account the positions of their $G$-orbits with respect to
 $\mu_\liep^{-1}(0).$ A point $x\in X$ is polystable if it's $G$-orbit intersects the level set $\mu_\liep^{-1}(0)$
 ($G\cdot x \cap \mu_\liep^{-1}(0) \neq \emptyset$) and semistable if the closure of it's $G$-orbit intersects
 $\mu_\liep^{-1}(0)$ ($\overline{G\cdot x} \cap \mu_\liep^{-1}(0) \neq \emptyset$).
 The set of semistable points associated with the critical points of $f$ is studied in great details  \cite{PG,heinzner-schwarz-stoetzel,heinzner-stoetzel}.  The aim of these papers were to develop a Geometrical Invariant Theory for actions of real Lie groups on real submanifolds of a K\"ahler manifold, generalizing results of Heinzner-Huckleberry-Loose \cite{HGI,HH,HHL,HL} and Sjamaar \cite{sjamaar}.

%We define the norm square of $\mu_\mathfrak{p}$ by
%$$f(x) := \frac{1}{2}|\mu_\mathfrak{p}(x)|^2; \quad x \in X.$$ A strategy to analyzing the $G$-action on $M$ is to view $f$ as generalized Morse %function. In \cite{heinzner-schwarz-stoetzel} the authors proved the existence of a smooth $G$-invariant stratification of $X$ and they studied %its properties.
%The behavior of gradient map is encoded
%in a function called Kempf-Ness function (Section \ref{Kempf-Ness Function}).%
%Recently, Biliotti and Windare \cite{Properties of Gradient map} proved that Kempf-Ness function is Morse-Bott
%and it is convex along geodesics for the action of $G$ on $X$ if $X$ is compact and connected.
%This definition leads to the following questions. \textbf{Question:} Which $G$-orbits $G\cdot x\subset X$ intersect $\mu_\liep^{-1}(0)$? and what are what are the $G$-orbits $G\cdot x\subset X$ whose closure intersect $\mu_\liep^{-1}(0)$? This question has been studied in literature. For example, the first question was answered for the action of $G = U^\C$ on $X = Z$ in \cite{Mundet} using the moment map.

On the other hand, the semistability (polystability) condition can be checked using a numerical criterion in term of a function called the maximal weight, which can be regarded as a \kelerian version of the Hilbert Criterion in Geometric Invariant Theory \cite{MUNDETT,Teleman}. We refer to this numerical condition as analytic stability condition. Biliotti and Zedda \cite{LM}, see also \cite{bgs}, give a systematic treatment of the stability theory for an action of a real non-compact reductive Lie group $G$ on a compact connected real manifold $X$ using the maximal weight function and apply this settings to the action of $G$ on measures of $X.$

In this paper, we introduce a large class of actions of $G$ on $X$ following Teleman \cite{Teleman}, called \emph{energy complete action}. As Teleman pointed out, the energy completeness condition gives the natural framework for the stability theory in non-algebraic complex geometry. For such actions,
we prove that the semistability condition, respectively polystability condition, is  equivalent to analytic semistability condition (Theorem \ref{semistable}), respectively  analytic polystability condition (Theorem \ref{polystable}), extending the results due to Teleman \cite{Teleman} and Biliotti and Zedda \cite{LM}, respectively \cite[Theorem 7.4]{Salamon}. We also characterize the semistable and polystable points in terms of one-parameter subgroups (Corollary \ref{hmss} and Corollary \ref{hmps}).

The paper is organized as follows:

Section \ref{prelim} contains a few basic facts concerning real reductive Lie groups and the gradient map. Many of the results in this section are well known but we include short proofs in some cases. We also recall the  Kempf-Ness function associated to the $G$-action on $X$ and some of its properties are given. 

In Section \ref{analytic}, we define the stability conditions and the maximal weight associated with the gradient map and some of its properties are given. We also introduce and discuss the class of energy complete actions and we prove the moment-weight inequality (Theorem \ref{GMWI}).

In Section \ref{final-section} we introduce the notions of analytically semistability and polystability and we prove the main results.

In Section \ref{linear} we discuss examples which illustrate the general theory.

In Section \ref{final}, following Teleman \cite{Teleman}, we introduce the notion of symplectization of the $U^\C$-action on $Z$ with respect to $G$.
The notions of stability, polystability and semistability depend of the Cartan decomposition of $G$ induced by the Cartan decomposition of $U^\C=U\exp(\textbf{i}\liu)$ and so these concepts depend on the triple $(U,\mathtt g,\mu)$, where $\mathtt g$ is the Riemannian metric induced by the K\"ahler form  $\omega$. The norm square of the gradient map also depends on the triple $(U,\mathtt g,\mu)$.

Let  $g\in G$. Then $U'=gUg^{-1}$ is a maximal compact subgroup $U^\C$ and $U^\C=U'\exp(\textbf{i}\liu')$. $U'$ preserves the K\"ahler form $\omega_g=(g^{-1})^{\star}\omega$ and there exists a momentum map $\mu:Z \lra \liu'$, given by $\mu'=\mathrm{Ad}(g)\circ \mu \circ g^{-1}$. Note that $(g^{-1})^{\star}\mathtt g$ is the Riemannian metric associated to $\omega_g$. Now, $G$ is also compatible with $U^\C=U'\exp(\textbf{i}\liu')$. Indeed,   $G=K'\exp(\liep')$, where $K'=gKg^{-1}$ and $\liep'=\mathrm{Ad}(g)(\liep)$. Moreover,  $\mathrm{Ad}(g)\circ \mup \circ g^{-1}$ is the $G$-gradient map associated to $\mu'$.  This means that one may also chose the triple $(U',(g^{-1})^{\star}\mathtt g,\mu')$ for analyzing the $G$-action on $X$. We show that the polystable and semistable conditions and the stratification of $X$ with respect to the norm square gradient map do not depend of the triple chosen.

{\bfseries \noindent{Acknowledgements.}}   We wish to thank Alessandro Ghigi and Peter Heinzner  for interesting discussions. We would also like to thank the anonymous referee for carefully reading our paper and for giving such constructive comments which substantially helped improving the quality of the paper.
\section{Preliminaries}\label{prelim}
\subsection{Compatible subgroups and parabolic subgroups}
\label{comp-subgrous}
Let $H$ be a Lie group with Lie algebra $\mathfrak{h}$ and $E, F \subset \mathfrak{h}$. Then, we set
 \begin{gather*}
   E^F :=     \{\eta\in E: [\eta, \xi ] =0, \forall \xi \in F\} \\
   H^F = \{g\in H: \Ad g (\xi ) = \xi, \forall \xi \in F\}.
 \end{gather*}
 If $F=\{\beta\}$ we write simply $E^\beta$ and $H^\beta$.

 Let $U$ be a compact Lie group and let $U^\C$ be the corresponding complex linear algebraic group \cite{chevalley}. The group $U^\C$ is reductive and is the universal complexification of $U$ in the sense of \cite{ho}. On the Lie algebra level, we have the Cartan decomposition $\liu^\C=\liu\oplus \textbf{i} \liu$ with a corresponding Cartan involution $\theta:\liu^\C \lra \liu^\C$ given by $\xi+\textbf{i}\nu \mapsto \xi-\textbf{i}\nu$. We also denote by $\theta$ the corresponding involution on $U^\C$. The real analytic map $F:U\times \textbf{i}\liu \lra U^\C$, $(u,\xi) \mapsto u\exp(\xi)$ is a diffeomorphism. We refer to the composition $U^\C=U\exp(\textbf{i}\liu)$ as the Cartan decomposition of $U^\C$.

 Let $G\subset U^\C$ be a closed real
 subgroup of $U^\C$. We say
 that $G$ is \enf{compatible} with the Cartan decomposition of $U^\C$ if $F (K \times \liep) = G$ where $K:=G\cap U$ and $\liep:= \lieg \cap \textbf{i}\liu$. The
 restriction of $F$ to $K\times \liep$ is then a diffeomorphism onto
 $G$. It follows that $K$ is a maximal compact subgroup of $G$ and
 that $\lieg = \liek \oplus \liep$.  Since $K$ is a retraction of $G$, it follows that $G$ has only finitely many connected components and $G=KG^o$, where $G^o$ denotes the connected component of $G$ containing $e$.  Since $U$ can be embedded in $\Gl(N,\C)$ for
 some $N$, and any such embedding induces a closed embedding of
 $U^\C$, any compatible subgroup is a closed linear group. By \cite[Proposition 1.59 p.57]{knapp-beyond},
 $\lieg$ is a real reductive Lie algebra, and so $\lieg =
 \liez(\lieg)\oplus [\lieg, \lieg]$, where $\liez(\lieg)=\{v\in \lieg:\, [v,\lieg]=0\}$ is the Lie algebra of the center of $G$. Denote by $G_{ss}$ the analytic
 subgroup tangent to $[\lieg, \lieg]$. Then $G_{ss}$ is closed and
 $G^o=Z(G^o)^o\cd G_{ss}$ \cite[p. 442]{knapp-beyond}, where $Z(G^o)^o$ denotes the connected component of the center of $G^o$ containing $e$.
 \begin{lemma}[\protect{\cite[Lemma 7]{LA}}]\label{lemcomp}$\ $ \begin{enumerate}
   \item \label {lemcomp1} If $G\subset U^\C$ is a compatible
     subgroup, and $H\subset G$ is closed and $\theta$-invariant,
     then $H$ is compatible if and only if $H$ has only finitely many connected components.
   \item \label {lemcomp2} If $G\subset U^\C$ is a connected
     compatible subgroup, then $G_{ss}$ is compatible.
     \item \label{lemcomp3} If $G\subset U^\C$ is a compatible
       subgroup and $E\subset \liep$ is any subset, then $G^E$ is
       compatible. Indeed, $G^E=K^E\exp(\liep^E)$, where $K^E=K\cap G^E$ and $\liep^E=\{x\in \liep:\, [x,E]=0\}$.
   \end{enumerate}
 \end{lemma}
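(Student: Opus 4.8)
The plan is to prove part (a) and to derive (b) and (c) from it. In (a), the ``only if'' direction is immediate: a compatible $H$ is diffeomorphic to $(H\cap U)\times(\lieh\cap i\liu)$, and $H\cap U$ is closed in the compact group $U$, hence a compact Lie group, so $H$ has finitely many components. For the ``if'' direction I would first note that $\theta$-invariance of $H$ makes $\lieh$ $\theta$-invariant, so $\lieh=\liek_H\oplus\liep_H$ with $\liek_H:=\lieh\cap\liu$, $\liep_H:=\lieh\cap i\liu$, and that $\lieh$ is reductive (being $\theta$-invariant, $\lieh$ acts completely reducibly on $\C^N$, since orthogonal complements of $\lieh$-submodules are $\lieh$-submodules). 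Put $K_H:=H\cap U$. The identity component $H^0$ is closed, connected and $\theta$-invariant, so by the Cartan decomposition of connected real reductive subgroups of $U^\C$ (see, e.g., \cite{knapp-beyond}) one has $H^0=(H^0\cap U)\exp(\liep_H)$, in particular $H^0\cap\exp(i\liu)=\exp(\liep_H)$. Given any $h\in H$, I would write $h=k\exp\beta$ with $k\in K$, $\beta\in\liep$ (possible since $G$ is compatible); applying $\theta$ gives $\theta(h)=k\exp(-\beta)$, whence $\exp(2\beta)=\theta(h)^{-1}h\in H$ because $H$ is $\theta$-invariant. As $\pi_0(H)$ is finite, $\exp(2d\beta)=(\exp 2\beta)^{d}\in H^0$ for $d:=|\pi_0(H)|$, so $\exp(2d\beta)\in H^0\cap\exp(i\liu)=\exp(\liep_H)$; injectivity of $\exp$ on $i\liu$ then forces $2d\beta\in\liep_H$, i.e. $\beta\in\liep_H$. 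Hence $\exp\beta\in H$, $k=h\exp(-\beta)\in H\cap U=K_H$, and $h\in K_H\exp(\liep_H)$. This gives $H=K_H\exp(\liep_H)$, and since the map $K_H\times\liep_H\to H$ is the restriction of the diffeomorphism $f\colon K\times\liep\to G$ to a closed submanifold, it is a diffeomorphism onto $H$, so $H$ is compatible.

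Part (b) then follows at once: $\lieg=\liez(\lieg)\oplus[\lieg,\lieg]$, $G_{ss}$ is closed (as recalled above) and connected, and $\theta$ --- an automorphism of $\lieg$ --- preserves $[\lieg,\lieg]$, so $\theta(G_{ss})$ is the connected subgroup with Lie algebra $[\lieg,\lieg]$ and hence equals $G_{ss}$; thus $G_{ss}$ is closed, $\theta$-invariant and has finitely many components, so (a) applies. For part (c), $G^E$ is closed (defined by the conditions $\Ad(g)\xi=\xi$, $\xi\in E$), and it is $\theta$-invariant because, for $\xi\in E\subset\liep$ and $g\in G^E$, $\Ad(\theta(g))\xi=\theta\bigl(\Ad(g)(\theta\xi)\bigr)=\theta\bigl(\Ad(g)(-\xi)\bigr)=-\theta\xi=\xi$, using $\Ad(\theta(g))=\theta\circ\Ad(g)\circ\theta$ and $\theta|_{\liep}=-\mathrm{id}$. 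To get the explicit shape (which in particular gives finiteness of $\pi_0(G^E)$), I would rerun the computation of (a): for $g=k\exp\beta\in G^E$ ($k\in K$, $\beta\in\liep$) one gets $\exp(2\beta)=\theta(g)^{-1}g\in G^E$, so $e^{2\ad\beta}$ fixes $E$ pointwise; choosing an embedding $U^\C\subset\Gl(N,\C)$ with $U\subset\mathrm{U}(N)$, $\beta$ is Hermitian, $\ad\beta$ is self-adjoint for the Hilbert--Schmidt inner product, hence diagonalizable with real eigenvalues, so $\ker(e^{2\ad\beta}-\mathrm{id})=\ker(\ad\beta)$; therefore $E\subset\ker(\ad\beta)$, i.e. $\beta\in\liep^E$, whence $\exp\beta\in G^E$ and $k\in G^E\cap U=K^E$. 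This yields $G^E=K^E\exp(\liep^E)\cong K^E\times\liep^E$ via the restriction of $f$, so $G^E$ is compatible, with $\Lie(G^E)=\liek^E\oplus\liep^E=\lieg^E$.

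The hard part is the ``if'' direction of (a), and within it the implication $\exp(2\beta)\in H\Rightarrow\beta\in\lieh$ for $\beta\in\liep$. This fails without the finiteness hypothesis: for $U^\C=\C^{*}$ the subgroup $H=\{2^{n}:n\in\Zeta\}$ is closed and $\theta$-invariant and contains $\exp(2\beta)=2$ for $\beta=\log\sqrt2$, yet $\beta\notin\lieh=0$ (indeed $\exp\beta=\sqrt2\notin H$), and $H$ has infinitely many components. So the argument must descend to $H^0$ and rely on the connected-case Cartan decomposition --- equivalently, on the fact that $\exp$ embeds the Hermitian part $\liep_H$ of $\lieh$ as a closed submanifold of $U^\C$. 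Establishing (or cleanly citing) that connected-group statement is the only step requiring real care; the rest is bookkeeping with the diffeomorphism $f$ and with the eigenvalues of $\ad\beta$.
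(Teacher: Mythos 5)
The paper offers no proof of this lemma at all: it is quoted, with attribution, from \cite[Lemma 7]{LA}, so there is no internal argument to compare yours against. Your proof is correct and is essentially the standard argument underlying that reference. The one genuinely nontrivial input — the global Cartan decomposition $H^0=(H^0\cap U)\exp(\lieh\cap i\liu)$ for a closed connected $\theta$-stable subgroup of $U^\C$ (Mostow/Knapp, \cite{knapp-beyond}) — is correctly isolated and cited rather than hidden, and the reduction of the general case to $H^0$ via $\theta(h)^{-1}h=\exp(2\beta)$ together with finiteness of $\pi_0(H)$ is exactly the right mechanism; your example $H=\{2^{n}:n\in\Zeta\}\subset\C^{*}$ correctly shows the finiteness hypothesis cannot be dropped. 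Parts (b) and (c) are also handled correctly, and your direct eigenvalue argument in (c) (self-adjointness of $\ad\beta$ forcing $\ker(e^{2\ad\beta}-\mathrm{Id})=\ker(\ad\beta)$) has the added benefit of producing the explicit decomposition $G^E=K^E\exp(\liep^E)$ asserted in the statement, rather than merely verifying the hypotheses of part (a).
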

If $\beta \in \liep$ the endomorphism $\mathrm{ad}(\beta) \in\mathrm{End}(\lieg)$ is diagonalizable over $\R$. Denote by $V_\lambda (\mathrm{ad}(\beta))$ the eigenspace of $\mathrm{ad}(\beta)$ corresponding to the eigenvalue $\lambda$. We define,
\begin{align*}
  G^{\beta+} &:=\{g \in G : \lim_{t\to - \infty} \exp (t\beta)\, g
  \exp (-t\beta) \text { exists} \}\\
  R^{\beta+} &:=\{g \in G : \lim_{t\to - \infty} \exp (t\beta)\,
  g \exp (-t\beta) =e \}\\
    \lier^{\beta+}: = \bigoplus_{\la > 0} V_\la (\mathrm{ad}( \beta)) \\
  G^{\beta-} &:\{g\in G:\, \lim_{t\to + \infty} \exp (t\beta)\, g \exp (-t\beta) \text { exists}\}\\
   R^{\beta-} &:=\{g \in G : \lim_{t\to + \infty} \exp (t\beta)\, g \exp(-t\beta)=e \}\\
  \lier^{\beta-}: = \bigoplus_{\la < 0} V_\la (\mathrm{ad}(\beta)).
\end{align*}
Note that $G^{\beta-}=G^{(-\beta)+}=\theta(G^{\beta+})$, $\theta(R^{\beta+})=R^{\beta-}$ and $G^{\beta}=G^{\beta+}\cap G^{\beta-}$.
The group $G^{\beta+}$, respectively $G^{\beta-}$, is a parabolic subgroup of $G$ with unipotent radical $R^{\beta+}$, respectively $R^{\beta-}$ and Levi factor $G^{\beta}$.
$R^{\beta+}$ is connected with Lie algebra $\lier^{\beta+}$. Hence $R^{\beta-}$ is connected with Lie algebra $\lier^{\beta-}$.
 The parabolic subgroup $G^{\beta+}$ is the semidirect product of $G^\beta$ with $R^{\beta+}$ and we have the projection $\pi^{\beta+} : G^{\beta+} \lra G^\beta$,
 $
 \pi^{\beta+}(g)=\lim_{t\mapsto -\infty} exp(t\beta)g\exp(-t\beta).
 $
Analogously, $G^{\beta-}$ is the semidirect product of $G^{\beta}$ with $R^{\beta-}$ and we have the projection $\pi^{\beta-}:G^{\beta-} \lra G^\beta$,  $\pi^{\beta-}(g)=\lim_{t\mapsto +\infty} exp(t\beta)g\exp(-t\beta)$. In particular, $ \lieg^{\beta+}= \lieg^\beta \oplus \lier^{\beta+}$, respectively  $ \lieg^{\beta-}= \lieg^\beta \oplus \lier^{\beta-}$.
\begin{proposition}\label{parabolic-decomposition}
For any $\beta \in \liep$, we have $G=KG^{\beta+}$.
\end{proposition}
\begin{proof}
If $G$ is connected, the result is well-known, see for instance \cite[Lemma 9]{LA} and \cite[Lemma 4.1]{heinzner-schwarz-stoetzel}. Since $G=KG^o$, it follows that
$
G=KG^o=K(G^{o})^{\beta+}=KG^{\beta+},
$
concluding the proof.
\end{proof}
\subsection{Gradient map}
\label{subsection-gradient-moment}\label{sgm}
Let $(Z, \om)$ be a \Keler manifold on which $U^\C$ acts
holomorphically on $Z$. Assume that $U$ preserves $\om$ and that there is a $U$-equivariant
momentum map $\mu: Z \ra \liu^*$. By definition we then have $\mathrm{d} \mu^\xi=i_{\xi_Z} \omega$, where $\mu^\xi(z):=\mu(z)(\xi)$ and $\displaystyle \xi_Z(z) := \frac{d}{dt}\zero \exp(t\xi)z$. We choose and fix an $\mathrm{Ad}(U^\C)$ inner product of Euclidian type on the Lie algebra $\liu^\C$ \cite[Section 3.2]{BT} and \cite[Definition 3.2.4]{LT12}. One can use for instance an embedding of $U$ in $\mathrm{U}(n)$ and its extension $U^\C \lra \mathrm{GL}(n,\C)$. Then
\[
B:\liu^\C \times \liu^\C \lra \C, \qquad (X,Y) \mapsto \mathrm{Tr}(XY)
\]
is an $\mathrm{Ad}(U^\C)$ inner product of Euclidian type on the Lie algebra $\liu^\C$.

Let $\scalo$ denote the real part of $B$. Then, keeping in mind that $\liu \subset \mathrm{u}(n)$, $\scalo$ is positive-definite on $\textbf{i}\liu$, negative-define on $\liu$, $\langle \liu, \textbf{i} \liu \rangle=0$ and $\langle \textbf{i}\cdot , \textbf{i} \cdot \rangle =-\scalo$. In particular we may identify $\liu$ and $\liu^*$ by means of $-\scalo$ and so we may of think the momentum map as $\liu$-valued map.

Let $G \subset U^\C$ be a compatible subgroup of $U^\C$. Then we have $\liep\subset \textbf{i}\liu$. If $z\in Z$, then the orthogonal projection of $\textbf{i}\mu(z)$ onto $\liep$ with respect to $\scalo$ defines a $K$-equivariant map $\mup:Z \lra \liep$. In other words, we define $\mup$ requiring that for any $\beta \in \liep$, we have
\[
\mup^\beta(z):=\langle \mup(z), \beta \rangle =\langle \textbf{i}\mu(z),\beta\rangle=-\langle \mu(z),-\textbf{i}\beta\rangle=\mu^{-\textbf{i}\beta}(z).
\]
The map $\mup:Z \lra \liep$ is called the $G$-gradient map associated with $\mu$. The equation $\mathrm{d} \mu^\xi =i_{\xi_Z} \omega$ is equivalent to $\mathrm{grad}\, \mup^\beta =\beta_Z$ for any $\beta \in \liep$. The gradient is computed with respect to the Riemannian structure given by the K\"ahler form $\omega$ on $Z$, i.e., $(v, w) = \om (v, Jw)$.
For the rest of this paper, we fix a $G$-invariant locally closed real submanifold $X$ of $Z$ and we also denote the restriction of $\mu_\mathfrak{p}$ to $X$ by $\mu_\mathfrak{p}.$ Then  $\mup:X \lra \liep$ is a $K$-equivariant map such that $
\text{grad}\mu_\mathfrak{p}^\beta = \beta_X
$. Here the gradient is now computed with respect to the induced Riemannian metric on $X$ that we also denote by $(\cdot,\cdot)$. Similarly, $\bot$ denotes perpendicularity relative to the Riemannian metric on $X.$
\begin{remark}\label{splitting}
If $\lieg\subset \liu^\C$ is compatible with respect to the Cartan decomposition $\liu^\C=\liu \oplus \textbf{i}\liu$, then $\scalo_{\lieg \times \lieg}$ is non-degenerate. Hence $\liu^\C=\lieg\oplus \lieg^\perp$, where $\lieg^\perp=\{v\in \liu^\C:\, \langle v,\lieg\rangle=0\}$.
\end{remark}
Let $\lia \subset \liep$ be an Abelian subalgebra. Then the $A=\exp(\lia)$-gradient map on $X$ is given by  $\mu_{\lia}=\pi_\lia \circ \mup$, where  $\pi_\lia:\liep \lra \lia$ denotes the orthogonal projection of $\liep$ onto $\lia$. We recall some of the properties of the gradient map.

For any subspace $\mathfrak{m}$ of $\mathfrak{\lieg}$ and $x\in X,$ let $$\mathfrak{m}\cdot x := \{\xi_X(x) : \xi \in \mathfrak{m}\}$$ and let $\liep_x:=\{\xi \in \liep:\, \xi_X (x)=0\}$. We denote by $G_x$ and $K_x$ the stabilizer subgroup of $x\in X$ with respect to the $G$-action and the $K$-action respectively and by $\lieg_x$ and $\liek_x$ their respective Lie algebras.
\begin{lemma}
Let $x\in X$. Then $\text{Ker}\,\, d\mu_\mathfrak{p}(x) = (\mathfrak{p}\cdot x)^\bot $.
\end{lemma}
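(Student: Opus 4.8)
The plan is to test membership in $\operatorname{Ker} d\mup(x)$ against the inner product $\scalo$ on $\liep$ and then to translate that pairing into the Riemannian pairing on $T_xX$ via the defining property of the gradient map. Fix $x\in X$ and $v\in T_xX$. Since $d\mup(x)v$ is a vector in $\liep$ and $\scalo$ restricts to a non-degenerate scalar product on $\liep$, we have $d\mup(x)v=0$ if and only if $\langle d\mup(x)v,\beta\rangle=0$ for every $\beta\in\liep$, so everything reduces to computing $\langle d\mup(x)v,\beta\rangle$.

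First I would use that, for fixed $\beta\in\liep$, the real-valued function $\mup^\beta=\langle\mup(\cdot),\beta\rangle$ has differential $d\mup^\beta(x)v=\langle d\mup(x)v,\beta\rangle$, simply because $\beta$ is a constant vector and $\scalo$ is bilinear. Next I would invoke the identity $\operatorname{grad}\mup^\beta=\beta_X$ recalled in Section \ref{subsection-gradient-moment}: by definition of the gradient with respect to the induced Riemannian metric $(\cdot,\cdot)$ on $X$, this gives $d\mup^\beta(x)v=(\operatorname{grad}\mup^\beta(x),v)=(\beta_X(x),v)$. Chaining the two equalities yields $\langle d\mup(x)v,\beta\rangle=(\beta_X(x),v)$ for all $\beta\in\liep$.

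Combining these, $v\in\operatorname{Ker} d\mup(x)$ if and only if $(\beta_X(x),v)=0$ for all $\beta\in\liep$, if and only if $v$ is orthogonal to $\{\beta_X(x):\beta\in\liep\}=\liep\cdot x$, i.e. $v\in(\liep\cdot x)^\bot$; the same chain of equivalences gives both inclusions at once. There is no serious obstacle here — the statement is essentially a repackaging of $\operatorname{grad}\mup^\beta=\beta_X$ — and the only point worth a word is the non-degeneracy of $\scalo$ on $\liep$, which is what allows one to detect $d\mup(x)v=0$ from its pairings with all $\beta\in\liep$.
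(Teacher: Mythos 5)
Your proof is correct and is exactly the paper's argument: the paper's proof is the one-line observation that the lemma follows from $\operatorname{grad}\mup^\beta=\beta_X$, and you have simply written out the chain of equivalences (non-degeneracy of $\scalo$ on $\liep$, $d\mup^\beta(x)v=(\beta_X(x),v)$) that makes this explicit. Nothing to add.
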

\begin{proof}
This follows from the basic fact that $\mathrm{grad}\, \mup^\beta=\beta_X$ for any $\beta \in \liep$.
\end{proof}
%\begin{proof}
%From (\ref{mu3}),  $v\in \text{ker}\,\, d\mu_\mathfrak{p}(x)$ if and only if for all $\beta \in \mathfrak{p}$
%\begin{align*}
 %    & \langle d\mu_\mathfrak{p}(x)(v), \beta \rangle = 0\\
 %    & \Longleftrightarrow d\mu_\mathfrak{p}^\beta (v) = 0\\
 %   & \Longleftrightarrow \langle \beta_X(x), v\rangle = 0.
%\end{align*}
%\end{proof}
As a corollary, one can prove the following result.
\begin{corollary}
Let $x\in X.$ The following are equivalent:
\begin{enumerate}
    \item $\mathrm (d\mu_\mathfrak{p})_x : T_x X \to \mathfrak{p}$ is onto.
    \item $(\mathrm d \mu_\mathfrak{p} )_x : \mathfrak{p}\cdot x \to \mathfrak{p}$ is onto.
    \item $\liep_x=\{0\}$.
\end{enumerate}
\end{corollary}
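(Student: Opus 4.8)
The plan is to deduce everything from the preceding lemma, $\operatorname{Ker}(d\mup)_x = (\liep\cdot x)^\bot$, by passing to adjoints with respect to the inner products on $T_x X$ and on $\liep$. The one computation I need at the outset is the identification of the adjoint of $(d\mup)_x\colon T_x X\to\liep$: for $\beta\in\liep$ and $v\in T_x X$,
\[
\langle (d\mup)_x(v),\beta\rangle = d(\mup^\beta)(x)(v) = (\operatorname{grad}\mup^\beta(x),v) = (\beta_X(x),v),
\]
so $(d\mup)_x^{\,*}(\beta)=\beta_X(x)$; in particular $\operatorname{Im}(d\mup)_x^{\,*}=\liep\cdot x$, which is just a restatement of the lemma.

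For (a) $\Leftrightarrow$ (c): a linear map between finite-dimensional inner product spaces is surjective if and only if its adjoint is injective, and by the formula above $(d\mup)_x^{\,*}(\beta)=\beta_X(x)=0$ holds exactly when $\beta\in\liep_x$; hence $(d\mup)_x$ is onto iff $\liep_x=\{0\}$. Equivalently, one checks directly that $\beta\bot\operatorname{Im}(d\mup)_x$ iff $(\beta_X(x),v)=0$ for all $v\in T_x X$, and since $\beta_X(x)$ itself lies in $T_x X$ this forces $\beta_X(x)=0$; thus $(\operatorname{Im}(d\mup)_x)^\bot=\liep_x$.

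For the remaining equivalence, (b) $\Rightarrow$ (a) is immediate since $\lieg\cdot x\subseteq T_x X$. For (c) $\Rightarrow$ (b) I would run the same argument on the restriction $(d\mup)_x|_{\lieg\cdot x}$: a vector $\beta\in\liep$ is orthogonal to its image iff $(\beta_X(x),\xi_X(x))=0$ for every $\xi\in\lieg$, i.e. iff $\beta_X(x)\bot\lieg\cdot x$; but $\beta_X(x)\in\liep\cdot x\subseteq\lieg\cdot x$, so again $\beta_X(x)=0$ and $\beta\in\liep_x$. Hence $(\operatorname{Im}(d\mup)_x|_{\lieg\cdot x})^\bot=\liep_x$, so if $\liep_x=\{0\}$ this restriction is already surjective, closing the cycle (b) $\Rightarrow$ (a) $\Rightarrow$ (c) $\Rightarrow$ (b). There is no genuine obstacle here: the only point needing care is the observation, used twice, that a fundamental vector field $\beta_X(x)$ automatically belongs to the subspace against which orthogonality is being tested ($T_x X$, respectively $\lieg\cdot x$), which is precisely what collapses "$\bot$" to "$=0$"; everything else is linear algebra on top of the lemma.
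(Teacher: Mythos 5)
Your argument is correct and is exactly the route the paper intends: the corollary is stated without proof immediately after the lemma $\operatorname{Ker}(d\mup)_x=(\liep\cdot x)^\bot$, and your identification of the adjoint $\beta\mapsto\beta_X(x)$ via $\operatorname{grad}\mup^\beta=\beta_X$, together with the observation that $\beta_X(x)$ lies in $\lieg\cdot x$ (so orthogonality forces vanishing), is the standard linear-algebra completion of that lemma. Nothing is missing.
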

%\begin{proof}
%$$\text{ker}\,\, d\mu_\mathfrak{p}(x) = (\mathfrak{p}\cdot x)^\bot,$$ it follows that $d\mu_\mathfrak{p}(x)$ \text{is surjective} if and only if
%    $d\mu_\mathfrak{p} : \mathfrak{p}\cdot x \to \mathfrak{p}$ \text{is surjective}, concluding the proof.

%\end{proof}
Since $\mathrm{grad}\, \mup^\beta (x)=\beta_X (x)$, the following result is easy to check.
\begin{lemma}\label{increasing}
Let $x\in X$ and let $\beta \in \mathfrak{p}.$ Then either $\beta_X(x) = 0$ or the function $t\mapsto \mu_\mathfrak{p}^\beta(\exp(t\beta) x)$ is strictly increasing.
\end{lemma}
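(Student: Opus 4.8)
The plan is to exploit the identity $\mathrm{grad}\, \mu_\mathfrak{p}^\beta(x)=\beta_X(x)$, which holds for every $\beta\in\mathfrak{p}$ by the construction of the gradient map. Set $\gamma(t):=\exp(t\beta)x$ and consider $h(t):=\mu_\mathfrak{p}^\beta(\gamma(t))$. First I would differentiate: $h'(t)=\big(\mathrm{grad}\,\mu_\mathfrak{p}^\beta(\gamma(t)),\,\dot\gamma(t)\big)$. The key observation is that $\dot\gamma(t)=\beta_X(\gamma(t))$, since the curve $\gamma$ is precisely the integral curve of the fundamental vector field $\beta_X$ through $x$. Combining this with the gradient identity gives $h'(t)=\big(\beta_X(\gamma(t)),\beta_X(\gamma(t))\big)=\|\beta_X(\gamma(t))\|^2\ge 0$, where the norm is taken with respect to the induced Riemannian metric on $X$. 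Hence $h$ is non-decreasing, so it only remains to rule out the case where $h$ is constant on some subinterval while $\beta_X(x)\ne 0$.

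Next I would address the dichotomy. Suppose $\beta_X(x)\ne 0$; I claim $h$ is then strictly increasing on all of $\mathbb{R}$. If $h'(t_0)=0$ for some $t_0$, then $\beta_X(\gamma(t_0))=0$, i.e. $\gamma(t_0)=\exp(t_0\beta)x$ is a zero of the vector field $\beta_X$. But the flow of $\beta_X$ is exactly $s\mapsto\exp(s\beta)$, and a point fixed by this flow at one time is fixed for all times: $\exp(s\beta)\gamma(t_0)=\gamma(t_0)$ for all $s$, hence $\exp(s\beta)x=\exp((s-t_0)\beta)\gamma(t_0)=\gamma(t_0)$ for all $s$, which forces $\beta_X(x)=\dot\gamma(0)=0$, a contradiction. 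Therefore $h'(t)>0$ for every $t$, and $h$ is strictly increasing. (Alternatively one can argue via uniqueness of integral curves of the complete vector field $\beta_X$: if $\gamma$ meets a zero it is constant, contradicting $\dot\gamma(0)\ne 0$.)

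I do not expect any serious obstacle here; the statement is essentially a formal consequence of the fact that $\mu_\mathfrak{p}^\beta$ is a potential for the flow generated by $\beta$, so the "energy" $\|\beta_X\|^2$ governs its derivative along orbits. The only point requiring a line of care is the passage from "$h$ non-decreasing" to "$h$ strictly increasing", i.e. showing a zero of $h'$ at one parameter value propagates to $\beta_X(x)=0$; this is handled by the flow-invariance of the zero set of $\beta_X$ noted above.
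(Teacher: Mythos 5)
Your proposal is correct and follows exactly the argument the paper intends: the paper omits the proof, remarking only that the lemma follows from $\mathrm{grad}\,\mu_\liep^\beta=\beta_X$, and later records the same computation $\frac{d}{dt}\mu_\liep^\beta(\exp(t\beta)x)=\|\beta_X(\exp(t\beta)x)\|^2$ that you derive. Your extra care in showing that a single zero of the derivative forces $\beta_X(x)=0$ (via uniqueness of integral curves of $\beta_X$) is exactly the right way to pass from non-decreasing to strictly increasing.
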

\begin{corollary}\label{lem}
If $x \in X$ and $\beta \in \mathfrak{p},$ then $\mu_\mathfrak{p}(\exp(\beta) x) = \mu_\mathfrak{p}(x)$ if and only if $\beta_X(x) = 0.$
\end{corollary}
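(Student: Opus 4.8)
The plan is to derive both implications from Lemma \ref{increasing}, which says that for $\beta \in \liep$ and $x \in X$, either $\beta_X(x) = 0$ or the real function $h(t) := \mu_\mathfrak{p}^\beta(\exp(t\beta)x)$ is strictly increasing. The key observation is that $h$ encodes exactly the quantity we wish to control: indeed $h(t) = \langle \mu_\mathfrak{p}(\exp(t\beta)x), \beta \rangle$, so $h(1) - h(0) = \langle \mu_\mathfrak{p}(\exp\beta\, x) - \mu_\mathfrak{p}(x), \beta \rangle$.

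For the easy direction, suppose $\beta_X(x) = 0$. Then $\exp(t\beta)x = x$ for all $t$ (the integral curve of the vanishing vector field through $x$ is constant), and in particular $\exp\beta\, x = x$, whence trivially $\mu_\mathfrak{p}(\exp\beta\, x) = \mu_\mathfrak{p}(x)$. For the converse, suppose $\beta_X(x) \neq 0$. Then by Lemma \ref{increasing} the function $h$ is strictly increasing, so $h(1) > h(0)$, i.e. $\langle \mu_\mathfrak{p}(\exp\beta\, x), \beta\rangle > \langle \mu_\mathfrak{p}(x), \beta\rangle$; in particular $\mu_\mathfrak{p}(\exp\beta\, x) \neq \mu_\mathfrak{p}(x)$. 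Taking the contrapositive, $\mu_\mathfrak{p}(\exp\beta\, x) = \mu_\mathfrak{p}(x)$ forces $\beta_X(x) = 0$, which completes the proof.

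There is essentially no obstacle here; the one point that deserves a line of justification is that $\beta_X(x) = 0$ implies the whole orbit line $t \mapsto \exp(t\beta)x$ is the constant curve, which is just uniqueness of integral curves for the (complete, since $G$ acts) flow of $\beta_X$. The substance of the statement is entirely carried by Lemma \ref{increasing}, i.e. by the gradient map identity $\mathrm{grad}\,\mu_\mathfrak{p}^\beta = \beta_X$ together with the fact that a function is strictly increasing along a nonvanishing gradient flow line of itself.
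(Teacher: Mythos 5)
Your proof is correct and follows essentially the same route as the paper: both reduce the statement to Lemma \ref{increasing} by observing that $t \mapsto \mu_\mathfrak{p}^\beta(\exp(t\beta)x)$ takes equal values at $t=0$ and $t=1$ when the gradient images agree, which is incompatible with strict monotonicity unless $\beta_X(x)=0$. Your write-up is slightly more explicit than the paper's (which compresses both directions into one line), in particular in spelling out the easy direction via constancy of the integral curve, but there is no substantive difference.
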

\begin{proof}
Suppose $\mu_\mathfrak{p}(\exp(\beta) x) = \mu_\mathfrak{p}(x).$ Let $y(t)=\mu_\mathfrak{p}^\beta(\exp(t\beta) x)$. By the above Lemma,  $y(0)=y(1)=0$ if and only if $\beta_X(x)=0$, concluding the proof.
\end{proof}
%The next property asserts that two points in the zero set of the gradient map are equivalent under $G$ if and only if they are equivalent under $K.$
%\begin{lemma}
%Let $x,y\in \mu_\mathfrak{p}^{-1}(0).$ If $y\in G\cdot x,$ then $y\in K\cdot x,$ i. e., $G\cdot x \cap \mu_\mathfrak{p}^{-1}(0) = K\cdot x$
%\end{lemma}
%\begin{proof}
%Let $g = k\text{exp}(\beta) \in G,$ where $k\in K$ and $\beta \in \mathfrak{p}$ be such that $y = gx.$  Then $\mu_\mathfrak{p}(k\text{exp}(\beta) \cdot x) = \mu_\mathfrak{p}(\text{exp}(\beta) \cdot x) = \mu_\mathfrak{p}(x) = 0.$ By lemma $(\ref{lem}),$ we have  $y = k(\text{exp}(\beta)) \cdot x = k\cdot x.$
%\end{proof}
\begin{proposition}\label{stabilizer of a stable point}
 Let $x\in X$ be such that $\mup(x)=0$. Then
\begin{enumerate}
\item $G\cdot x \cap \mup^{-1}(0)=K\cdot x$.
\item $G_x=K_x \exp (\liep_x)$.
\end{enumerate}
In particular $G_x$ is a compatible subgroup of $G$.
\end{proposition}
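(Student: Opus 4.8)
The plan is to exploit the gradient map identity $\operatorname{grad}\mu_\liep^\beta = \beta_X$ together with the Cartan decomposition $G = K\exp(\liep)$, and to reduce everything to the one-variable monotonicity statement of Lemma \ref{increasing} and its Corollary \ref{lem}. For part (1), the inclusion $K\cdot x \subset G\cdot x \cap \mu_\liep^{-1}(0)$ is immediate from $K$-equivariance of $\mu_\liep$ and $\mu_\liep(x)=0$. For the reverse inclusion, take $g\in G$ with $\mu_\liep(g\cdot x) = 0$ and write $g = k\exp(\beta)$ with $k\in K$, $\beta\in\liep$. Then $\mu_\liep(g\cdot x) = \operatorname{Ad}(k)\,\mu_\liep(\exp(\beta)\cdot x)$, so $\mu_\liep(\exp(\beta)\cdot x) = 0 = \mu_\liep(x)$. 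By Corollary \ref{lem} this forces $\beta_X(x) = 0$, hence $\exp(\beta)\cdot x = x$ and $g\cdot x = k\cdot x \in K\cdot x$. So $G\cdot x \cap \mu_\liep^{-1}(0) = K\cdot x$.

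For part (2), first note $K_x\exp(\liep_x) \subset G_x$: if $\xi\in\liep_x$ then $\xi_X(x)=0$ so $\exp(\xi)$ fixes $x$, and $K_x$ obviously fixes $x$. Conversely, let $g\in G_x$ and write $g = k\exp(\beta)$ with $k\in K$, $\beta\in\liep$. Since $g\cdot x = x$, we have $\mu_\liep(x) = \mu_\liep(g\cdot x) = \operatorname{Ad}(k)\mu_\liep(\exp(\beta)\cdot x)$, and applying the $K$-action argument exactly as above gives $\mu_\liep(\exp(\beta)\cdot x) = \mu_\liep(x)$, so by Corollary \ref{lem} again $\beta_X(x) = 0$, i.e.\ $\beta\in\liep_x$ and $\exp(\beta)\in G_x$. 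Then $k = g\exp(\beta)^{-1}\exp(\beta)\cdots$, more precisely $k = g\exp(-\beta) \in G_x\cap U = K_x$ since $g$ and $\exp(\beta)$ both fix $x$. Hence $g = k\exp(\beta)\in K_x\exp(\liep_x)$, proving the decomposition.

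Finally, for the "in particular" clause: $G_x$ is closed in $G$ because it is the stabilizer of a point under a continuous action. It is $\theta$-invariant: $\liep_x$ is a linear subspace of $\liep$, hence $\theta$-invariant, and $K_x\subset K$ is fixed by $\theta$; more structurally, the decomposition $G_x = K_x\exp(\liep_x)$ just established shows $G_x$ meets $K$ in $K_x$ and has "$\liep$-part" equal to the linear subspace $\liep_x$, which is the defining form of a compatible subgroup. To invoke Lemma \ref{lemcomp}(\ref{lemcomp1}) it remains to check that $G_x$ has finitely many connected components; this follows since $K_x\exp(\liep_x)\cong K_x\times\liep_x$ as a manifold (the Cartan diffeomorphism restricts), and $K_x$, being a closed subgroup of the compact group $K$, is compact with finitely many components. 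Therefore $G_x$ is a closed compatible subgroup of $G$.

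I expect the only subtle point to be verifying that $\liep_x$ is genuinely a Lie subalgebra and that $K_x\exp(\liep_x)$ is a subgroup rather than merely a subset — but this is forced: the identity $G_x = K_x\exp(\liep_x)$ is derived as a set equality, and since $G_x$ is a group, the Cartan-type uniqueness of the decomposition $k'\exp(\beta')$ in $G$ then propagates the group structure to $\liep_x$, showing $[\liep_x,\liep_x]\subset\liek_x$ and $[\liek_x,\liep_x]\subset\liep_x$ automatically. Alternatively one can observe directly that $\liep_x = \liep^E$ type conditions are vacuously unnecessary here since $\liep_x$ is cut out by the \emph{linear} conditions $\xi_X(x)=0$, and conclude via Lemma \ref{lemcomp}. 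Either route avoids any real computation.
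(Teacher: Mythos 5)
Your proposal is correct and follows essentially the same route as the paper: write $g=k\exp(\beta)$ via the Cartan decomposition, use $K$-equivariance of $\mu_\liep$ to reduce to $\mu_\liep(\exp(\beta)x)=\mu_\liep(x)$, and invoke Corollary \ref{lem} to force $\beta_X(x)=0$. The only difference is cosmetic: the paper concludes compatibility of $G_x$ directly from the set identity $G_x=K_x\exp(\liep_x)$ (which is exactly the defining condition, since $K_x=G_x\cap U$ and $\liep_x=\lieg_x\cap i\liu$), whereas you take a slightly longer detour through Lemma \ref{lemcomp}; both are fine.
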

\begin{proof}
Let $g=k\exp(\xi)\in G$, where $k\in K$ and $\xi \in \liep$, be such that $gx \in \mup^{-1}(0)$. By the $K$-equivariance of $\mup$, it follows that $\exp(\xi)x\in \mup^{-1}(0)$. By the above Corollary, $\exp(\xi)x=x$ and so $gx=kx$. For the second part just note that $k\exp(\xi)x=x$ implies $\exp(\xi)x=x$. By the above Corollary $\xi \in \liep_x$ and so $k\in K_x$. Hence $G_x=K_x \exp(\liep_x)$ is compatible.
\end{proof}
\begin{theorem}\label{line}[Slice Theorem \protect{\cite[Thm. 3.1]{heinzner-schwarz-stoetzel}}]
If $x \in X$ and $\mup(x) = 0$, there are a $G_x$-invariant
  decomposition $T_x X = \lieg \cd x \oplus W$, open $G_x$-invariant
  subsets $S \subset W$, $\Omega \subset X$ and a $G$-equivariant
  diffeomorphism $\Psi : G \times^{G_x}S \ra \Omega$, such that $0\in
  S, x\in \Omega$ and $\Psi ([e, 0]) =x$.
\end{theorem}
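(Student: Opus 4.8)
The plan is to carry out the usual slice construction, with the hypothesis $\mup(x)=0$ entering in two essential places: to build a $G_x$-invariant complement of $\lieg\cdot x$ in $T_xX$, and to ensure that the $G$-action is proper near the orbit $G\cdot x$, without which a slice cannot exist. By Proposition~\ref{stabilizer of a stable point}, $G_x=K_x\exp(\liep_x)$ is a closed compatible subgroup of $G$ with maximal compact subgroup $K_x$, so $\lieg_x=\liek_x\oplus\liep_x$ and the orbit $G\cdot x\cong G/G_x$ has $T_x(G\cdot x)=\lieg\cdot x$.

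First I would produce the $G_x$-invariant complement. Set $W:=(\lieg\cdot x)^\bot$, the orthogonal complement of $\lieg\cdot x$ in $T_xX$ for the induced Riemannian metric; since this metric is $U$-invariant, $W$ is $K_x$-invariant. For $\xi\in\liep_x$ the vector field $\xi_X=\mathrm{grad}\,\mup^\xi$ vanishes at $x$, so $x$ is a critical point of $\mup^\xi$, and the linearization $A_\xi\in\End(T_xX)$ of $\xi_X$ at $x$ is symmetric for the metric, being the Hessian of $\mup^\xi$ at the critical point $x$. Since $A_\xi(\eta_X(x))=\pm[\xi,\eta]_X(x)\in\lieg\cdot x$ for every $\eta\in\lieg$, the operator $A_\xi$ preserves $\lieg\cdot x$, and being symmetric it then preserves $W=(\lieg\cdot x)^\bot$ as well. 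As the time-$t$ flow of $\xi_X$ is the action of $\exp(t\xi)$, its differential at the fixed point $x$ equals $\exp(tA_\xi)$, so $W$ is invariant under $\exp(\liep_x)$, and combined with $K_x$-invariance, under all of $G_x=K_x\exp(\liep_x)$. This gives the $G_x$-invariant splitting $T_xX=\lieg\cdot x\oplus W$ and identifies the associated bundle $G\times^{G_x}W$ with the normal bundle of $G\cdot x$ in $X$.

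It then remains to build a $G$-equivariant diffeomorphism from a $G$-invariant neighbourhood of the zero section of $G\times^{G_x}W$ onto a $G$-invariant open neighbourhood $\Omega$ of $G\cdot x$ in $X$, restricting to the identity along the zero section $G\cdot x$. I would start from the Riemannian exponential map $\exp_x$ of the $U$-invariant metric: its restriction $\phi_0:=\exp_x|_W$ to a small ball around $0$ in $W$ is a $K_x$-equivariant embedding into $X$ with $\phi_0(0)=x$ and $(d\phi_0)_0$ equal to the inclusion $W\hookrightarrow T_xX$. Then I would upgrade $\phi_0$ to a $G_x$-equivariant map $\phi$ by transporting it along the $\exp(\liep)$-directions via the Cartan decomposition $G=K\exp(\liep)$; finally, for a sufficiently small $G_x$-invariant ball $S\subset W$ with $0\in S$, the induced $G$-equivariant map $\Psi\colon G\times^{G_x}S\to X$, $\Psi([g,w])=g\cdot\phi(w)$, satisfies $\Psi([e,0])=x$ and, after shrinking $S$, is a diffeomorphism onto an open $G$-invariant set $\Omega\subset X$ — which is exactly the claimed statement.

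The step I expect to be the main obstacle is this last one. For the noncompact group $G$ one cannot average to gain equivariance, and a $G$-equivariant tubular neighbourhood of $G\cdot x$ need not exist for a general, non-proper $G$-action. The hypothesis $\mup(x)=0$ is precisely what rescues the construction: at a zero of the gradient map the orbit $G\cdot x$ is closed and $G$ acts properly on a neighbourhood of it, so the $K_x$-equivariant slice $\phi_0$ at $x$ can be spread out along the Cartan directions $\exp(\liep)$ without destroying injectivity or the local diffeomorphism property. Making this precise — controlling the spreading and verifying the needed properness — is the technical core of the proof, and is carried out in \cite[Thm.~3.1]{heinzner-schwarz-stoetzel}.
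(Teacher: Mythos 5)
The paper does not prove this theorem at all: it is quoted verbatim from Heinzner--Schwarz--St\"otzel \cite[Thm.~3.1]{heinzner-schwarz-stoetzel}, so there is no in-paper argument to compare yours against. Judged on its own terms, the first half of your proposal is correct and complete: $G_x=K_x\exp(\liep_x)$ is compatible by Proposition \ref{stabilizer of a stable point}, the metric is $K_x$-invariant, and for $\xi\in\liep_x$ the linearization $A_\xi=d\xi_X(x)$ is the (symmetric) Hessian of $\mup^\xi$ at the critical point $x$ (this is exactly Proposition \ref{linearization1}), preserves $\lieg\cdot x$ because $A_\xi(\eta_X(x))=\pm[\xi,\eta]_X(x)$, hence preserves $W=(\lieg\cdot x)^\bot$; exponentiating gives $G_x$-invariance of $W$. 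That is a clean and correct derivation of the invariant splitting.

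The gap is in the second half, and you have located it yourself: the passage from the $K_x$-equivariant local slice $\phi_0=\exp_x|_W$ to a $G$-equivariant diffeomorphism $\Psi:G\times^{G_x}S\to\Omega$ is the entire content of the theorem for a noncompact $G$, and ``transporting along the $\exp(\liep)$-directions via the Cartan decomposition'' is not an argument --- one must show that the resulting map is well defined on the associated bundle, injective, and open, and none of the usual averaging or compactness tools are available. Moreover the properness statement you invoke to rescue this (``at a zero of $\mup$ the orbit is closed and $G$ acts properly on a neighbourhood of it'') is itself a nontrivial theorem, logically on a par with the slice theorem, and is asserted without proof; in \cite{heinzner-schwarz-stoetzel} the slice is in fact obtained by a different mechanism (reduction to a holomorphic slice for the $U^\C$-action on $Z$ at a zero of $\mu$, then intersecting with $X$ and $G$), not by first establishing properness and then spreading a Riemannian slice. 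So your proposal is an honest and largely accurate outline whose technical core is, like the paper's, a citation to \cite[Thm.~3.1]{heinzner-schwarz-stoetzel}; it should not be presented as a self-contained proof.
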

Here $G \times^{G_x}S$ denotes the associated bundle with principal
bundle $G \ra G/G_x$.
\begin{corollary} \label{slice-cor} If $x \in X$ and $\mup(x) = \beta$,
  there are a $G^\beta$-invariant decomposition $T_x X = \lieg^\beta
  \cd x \, \oplus W$, open $G^\beta$-invariant subsets $S \subset W$,
  $\Omega \subset X$ and a $G^\beta$-equivariant diffeomorphism $\Psi
  : G^\beta \times^{G_x}S \ra \Omega$, such that $0\in S, x\in \Omega$
  and $\Psi ([e, 0]) =x$.
\end{corollary}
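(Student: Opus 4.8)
The plan is to reduce to the Slice Theorem (Theorem~\ref{line}), applied with $G$ replaced by the centralizer $G^\beta$ of $\beta$ in $G$ and with the gradient map translated by the constant $\beta$ so that it vanishes at $x$. Since $\beta\in\liep$, Lemma~\ref{lemcomp} (with $E=\{\beta\}$) shows that $G^\beta$ is a compatible subgroup of $U^\C$, with $G^\beta=K^\beta\exp(\liep^\beta)$, $\lieg^\beta=\liek^\beta\oplus\liep^\beta$, $K^\beta=K\cap G^\beta$ and $\liep^\beta=\{\xi\in\liep:[\xi,\beta]=0\}$. Moreover $\beta\in\liez(\lieg^\beta)\cap\liep^\beta$ and $\beta$ is $\Ad(K^\beta)$-invariant, because $g\in G^\beta$ means precisely $\Ad(g)\beta=\beta$.

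Next I exhibit a gradient map for the $G^\beta$-action on $X$ that vanishes at $x$. As $\liep^\beta\subset\liep$, the $G^\beta$-gradient map is $\mu_{\liep^\beta}=\pi_{\liep^\beta}\circ\mup$, where $\pi_{\liep^\beta}:\liep\to\liep^\beta$ is the orthogonal projection; it is $K^\beta$-equivariant, and $\mu_{\liep^\beta}(x)=\pi_{\liep^\beta}(\beta)=\beta$ because $\beta\in\liep^\beta$. Put
\[
\nu:=\mu_{\liep^\beta}-\beta:X\lra\liep^\beta .
\]
Since $\beta$ is $\Ad(K^\beta)$-invariant, $\nu$ is again $K^\beta$-equivariant, and for every $\xi\in\liep^\beta$ we have $\nu^\xi=\mu_{\liep^\beta}^\xi-\langle\beta,\xi\rangle$, hence $\mathrm{grad}\,\nu^\xi=\mathrm{grad}\,\mu_{\liep^\beta}^\xi=\xi_X$. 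Thus $\nu$ is a gradient map for the $G^\beta$-action on $X$ --- namely the one attached to the $K^\beta$-equivariant map $z\mapsto\mu(z)+i\beta$ (note $i\beta\in\liu$), which still satisfies $d(\mu+i\beta)^\xi=i_{\xi_Z}\om$ --- and by construction $\nu(x)=\beta-\beta=0$.

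Finally I apply Theorem~\ref{line} to the compatible subgroup $G^\beta$ acting on $X$, with gradient map $\nu$, at the point $x$, where $\nu(x)=0$. This produces a $G_x$-invariant splitting $T_xX=\lieg^\beta\cd x\oplus W$ (here $G_x$ is the stabilizer of $x$ for the $G^\beta$-action), open $G_x$-invariant subsets $S\subset W$ and $\Omega\subset X$, and a $G^\beta$-equivariant diffeomorphism $\Psi:G^\beta\times^{G_x}S\ra\Omega$ with $0\in S$, $x\in\Omega$ and $\Psi([e,0])=x$, which is the assertion. The step I expect to need the most care is the middle one: one has to be sure that the hypotheses of Theorem~\ref{line} are genuinely met by the translated map $\nu$ and not merely by the gradient map of the ambient $U$-equivariant momentum map. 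This is exactly what the centrality of $\beta$ in $\lieg^\beta$ together with its $\Ad(K^\beta)$-invariance provide: they make $\mu+i\beta$ a $K^\beta$-equivariant momentum map for the $G^\beta$-picture (even though it is not $U$-equivariant), after which Theorem~\ref{line} applies verbatim.
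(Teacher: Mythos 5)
Your argument is correct and is essentially the paper's own proof: the paper likewise deduces the corollary by applying Theorem~\ref{line} to the compatible subgroup $G^\beta$ with a shifted momentum/gradient map (it writes $\widehat{\mu_{\liu^\beta}} := \mu_{\liu^\beta} - i\beta$), and you have merely made explicit the verifications it leaves implicit (compatibility of $G^\beta$ via Lemma~\ref{lemcomp}, $K^\beta$-equivariance of the shift because $\Ad(K^\beta)$ fixes $\beta$, and vanishing at $x$). The only divergence is a sign: with the paper's conventions your shift $\mu + i\beta$ is the one that makes the translated gradient map equal to $\mup-\beta$ and hence vanish at $x$, so your version is, if anything, the more carefully stated one.
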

This follows applying the previous theorem to the action of $G^\beta$ on $X$. Indeed, by Lemma \ref{lemcomp} $G^\beta=K^\beta\exp(\liep^\beta)$ is compatible and the orthogonal projection of $\textbf{i} \mu$ onto $\liep^\beta$ is the $G^\beta$-gradient map $\mu_{\liep^\beta}$. The group $G^\beta$ is also compatible with the Cartan decomposition of $(U^\C )^{\beta}=(U^\C )^{\textbf{i} \beta}=(U^{\textbf{i}\beta} )^\C$ and $\textbf{i} \beta$ is fixed by the $U^{\textbf{i}\beta}$-action on $\liu^{\textbf{i}\beta}$. This implies that $\widehat{\mu_{\liu^{\textbf{i} \beta}}}:Z \lra \liu^{\textbf{i} \beta}$
given by $\widehat{\mu_{\liu^{\textbf{i}\beta}}(z)}=\pi_{\liu^{\textbf{i}\beta}} \circ \mu+\textbf{i}\beta$, where $\pi_{\liu^{\textbf{i}\beta}}$ is the orthogonal projection of $\liu$ onto  $\liu^{\textbf{i}\beta}$, is the $U^{\textbf{i}\beta}$-shifted momentum map. The associated $G^{\beta}$-gradient map is given by $\widehat{\mu_{\liep^\beta}} := \mu_{\liep^\beta} -
\beta$. Hence, if $G$ is commutative, then we have a Slice Theorem for $G$ at every point of $X$,
see \cite[p.$169$]{heinzner-schwarz-stoetzel} and \cite{sjamaar} for more details.

If $\beta \in \liep$, then $\beta_X$ is a vector field on
$X$, i.e. a section of the bundle $TX$. For $x\in X$, the differential is a map
$T_x X \ra T_{\beta_X (x)}(TX)$. If $\beta_X (x) =0$, there is a canonical splitting $T_{\beta_X (x)}(TX) = T_x X \oplus
T_x X$. Accordingly the differential of $\beta_X$, regarded as a section of $TX$, splits into a horizontal and a vertical part. The horizontal part is the identity map. We denote the
vertical part by $\mathrm d \beta_X  (x)$. The linear map $\mathrm d \beta_X  (x)\in \End(T_x X)$ is indeed the so-called intrinsic differential of $\beta_X$, regarded as a section in the tangent bundle $TX$, at the vanishing point $x$. Let
$\{\phi_t=\exp(t\beta)\} $ be the flow of $\beta_X$.  There
  is a corresponding flow on $TX$. Since $\phi_t(x)=x$, the flow on
  $TX$ preserves $T_x X$ and there it is given by $d\phi_t(x) \in
  \Gl(T_x X)$.  Thus we get a linear $\R$-action on $T_x X$ with
  infinitesimal generator $d\beta_X  (x) $.
\begin{corollary}\label{slice-cor-2}
  If $\beta \in \liep $ and $x \in X$ is a critical point of $\mupb$,
  then there are open invariant neighborhoods $S \subset T_x X$ and
  $\Omega \subset X$ and an $\R$-equivariant diffeomorphism $\Psi : S
  \ra \Omega$, such that $0\in S, x\in \Omega$, $\Psi ( 0) =x$. Here
  $t\in \R$ acts as $d\phi_t(x)$ on $S$ and as $\phi_t$ on $\Omega$.)
\end{corollary}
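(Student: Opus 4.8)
The plan is to reduce the statement to the Slice Theorem (Theorem \ref{line}), applied to the one‑parameter subgroup generated by $\beta$ instead of to all of $G$. Since $\mathrm{grad}\,\mupb=\beta_X$, saying that $x$ is a critical point of $\mupb$ is the same as saying $\beta_X(x)=0$; hence the flow $\phi_t=\exp(t\beta)$ fixes $x$, and the subgroup $H:=\exp(\R\beta)=\{\phi_t:t\in\R\}$ lies in $G_x$. If $\beta=0$ the assertion is trivial, so assume $\beta\neq 0$; then $t\mapsto\exp(t\beta)$ is an isomorphism $\R\xrightarrow{\ \sim\ }H$.

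First I would check that $H$ is a compatible subgroup of $U^\C$. It is closed in $G$: the map $K\times\liep\to G$ is a diffeomorphism and $\{e\}\times\R\beta$ is closed in $K\times\liep$, so $H$ is closed in $G$, hence in $U^\C$. It is connected, so it has finitely many components, and it is $\theta$‑invariant, since $\theta|_\liep=-\Id$ gives $\theta(\exp(t\beta))=\exp(-t\beta)\in H$. By Lemma \ref{lemcomp}(a), $H$ is compatible, with Cartan decomposition $H=\{e\}\exp(\R\beta)$; thus $\lieh=\R\beta$, $\lieh\cap\liu=0$, and its gradient map is $\mu_{\R\beta}=\pi_{\R\beta}\circ\mup$, where $\pi_{\R\beta}:\liep\to\R\beta$ is the orthogonal projection. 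One computes $\mu_{\R\beta}(x)=\pi_{\R\beta}(\mup(x))=\bigl(\mupb(x)/\|\beta\|^2\bigr)\beta$.

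This value need not be zero, which is the only obstruction to applying Theorem \ref{line} directly; I would remove it exactly as in the proof of Corollary \ref{slice-cor}. Replace $U$ by $U^\beta$ and $\mu$ by the momentum map $\mu_{\liu^\beta}+c\,i\beta$ for a suitable constant $c\in\R$, where $\mu_{\liu^\beta}$ is the projection of $\mu$ onto $\liu^\beta$ and $i\beta\in\liez(\liu^\beta)$; this is again a momentum map for the $U^\beta$‑action, $H$ is still a compatible subgroup of the complexification of $U^\beta$, and $c$ may be chosen so that the associated $H$‑gradient map $\widehat\mu_{\R\beta}$ satisfies $\widehat\mu_{\R\beta}(x)=0$. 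Theorem \ref{line} applied to the $H$‑action on $X$ then gives an $H$‑invariant splitting $T_xX=\lieh\cdot x\oplus W$; since $\lieh\cdot x=\R\,\beta_X(x)=0$ we have $W=T_xX$, together with open $H_x$‑invariant subsets $S\subset T_xX$ and $\Omega\subset X$ and an $H$‑equivariant diffeomorphism $\Psi:H\times^{H_x}S\to\Omega$ with $0\in S$, $x\in\Omega$, $\Psi([e,0])=x$. As $H$ fixes $x$, we have $H_x=H$, so $H\times^{H_x}S\cong S$ and $\Psi$ becomes an $H$‑equivariant diffeomorphism $S\to\Omega$.

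It remains to identify this $H$‑action with the $\R$‑action in the statement. On $\Omega$ the element $\exp(t\beta)$ acts as $\phi_t$; on $S\subset T_xX$ it acts by the isotropy representation of $H$ at the fixed point $x$, that is, $\exp(t\beta)$ acts as $d\phi_t(x)$, which is precisely the linear $\R$‑action on $T_xX$ with infinitesimal generator $d\beta_X(x)$ described before the statement. Transporting along $\R\xrightarrow{\ \sim\ }H$, $t\mapsto\exp(t\beta)$, yields the asserted $\R$‑equivariant $\Psi$. I expect the only genuinely nonformal point to be the reduction in the third paragraph — verifying that shifting the momentum map by the central element $c\,i\beta$ of $\liu^\beta$ keeps us within the hypotheses of Theorem \ref{line} — while the remaining steps amount to unwinding the definitions of the objects introduced just before the corollary.
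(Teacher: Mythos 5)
Your argument is correct and is essentially the paper's own proof, which reads in full: ``The subgroup $H:=\exp(\R\beta)$ is compatible. It is enough to apply the previous corollary to the $H$-action at $x$.'' You have simply unwound what that one line relies on --- compatibility of $H$ via Lemma \ref{lemcomp}, the momentum-map shift from Corollary \ref{slice-cor} to arrange that the $H$-gradient map vanishes at $x$, the collapse of $H\times^{H_x}S$ to $S$ since $H$ fixes $x$, and the identification of the isotropy action with $d\phi_t(x)$ --- all of which matches the intended route.
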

\begin{proof}
 Since $\exp:\liep \lra G$ is a diffeomorphism onto the image, the subgroup $H:=\exp(\R \beta)$ is closed and so it is compatible.  Hence, it is enough to apply the previous corollary to the $H$-action on $X$ and the value at  $x$ of the corresponding gradient map.
\end{proof}
Let $\beta \in \mathfrak{p}$ and let $X^\beta =\{z\in X:\, \beta_X(z)=0\}.$ By Corollary \ref{slice-cor-2}, $X^\beta$ is a smooth, possibly disconnected, submanifold of $X$.

Let $x \in X^\beta$. Let $D^2\mup^\beta(x) $ denote the Hessian,
which is a symmetric operator on $T_x X$ such that
\begin{gather*}
  ( D^2 \mup^\beta(x) (v, v) = \frac{\mathrm d^2}{\mathrm
    dt^2} %\bigg \vert
  (\mup^\beta\circ \ga)(0)
  % \frac{\mathrm d^2}{\mathrm dt^2} \bigg \vert _{t=0}
  % \mup^\beta(\alfa(t)
\end{gather*}
where $\ga$ is a smooth curve, $\ga(0) = x$ and $ \dot{\ga}(0)=v$. Using the inner scalar product $(\cdot,\cdot)(x)$ one obtains an associate symmetric endomorphism $A(\mup^\beta) (x)$ of the Euclidian vector space $(T_x X, (\cdot,\cdot)(x))$.
Denote by $V_-$ (respectively $V_+$) the sum of the eigenspaces of $A(\mup^\beta) (x)$ corresponding to negative (resp. positive)
eigenvalues. Denote by $V_0$ the kernel.  Since $A(\mup^\beta) (x)$ is a
symmetric endomorphism, we get an orthogonal decomposition
\begin{gather}
  \label{Dec-tangente}
  T_x X = V_- \oplus V_0 \oplus V_+.
\end{gather}
Let $\alfa : G \ra X$ be the orbit map: $\alfa(g) :=gx$.  The
differential $d\alfa_e$ is the map $\xi \mapsto \xi_X (x)$. The following result is well-know. A proof is given in \cite{LA}.
% \textcolor{Red}{If $\beta \in \lieg$ and $\beta_X(x) =0$, the
% differential of $\beta_X$ viewed as a section of the tangent bundle
% splits canonically into a horizontal and a vertical part. The
% horizontal part is the identity map. We denote the vertical part by
% $d\beta_X(x)$.  It belongs to $\End(T_xX)$. Let $\{\phi_t\} $ be the
% flow of $\beta_X$. Then $\phi_t(x)=x$ and $d\phi_t(x) \in
% \Gl(T_xX)$. Thanks to Schwarz theorem on mixed derivatives
% $d\beta_X(x) $ is nothing but the infinitesimal generator of the
% 1-parameter subgroup $\{d\phi_t(x)\} \subset \Gl(T_xX)$.}
\begin{proposition}\label{linearization1}
  If $\beta \in \liep$ and $x \in  X^\beta$ then
  \begin{gather*}
    A(\mup^\beta)(x) = \mathrm d \beta_X (x).
  \end{gather*}
  Moreover $d\alfa_e (\mathfrak r^{\beta\pm} ) \subset V_\pm$ and $d\alfa_e(
  \lieg^\beta) \subset V_0$.  If $G$ acts transitively on $X$, then these are
  equalities.
\end{proposition}
\begin{corollary}
  \label{MorseBott}
  For every $\beta \in \liep$, $\mupb$ is a Morse-Bott function.
\end{corollary}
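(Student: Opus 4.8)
The plan is to verify the two defining properties of a Morse--Bott function for $\mupb$: that the critical set $\Crit(\mupb)=\{y\in X:\beta_X(y)=0\}$ is a disjoint union of connected submanifolds of $X$, and that at each $x\in\Crit(\mupb)$ the kernel of the Hessian $D^2\mupb(x)$ coincides with $T_x\Crit(\mupb)$. The two inputs I would use are the local model at a critical point given by Corollary \ref{slice-cor-2} and the identity $D^2\mupb(x)=d\beta_X(x)$ of Proposition \ref{linearization1}.

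First I would note that $\Crit(\mupb)$ is precisely the fixed--point set of the flow $\{\phi_t=\exp(t\beta)\}$: if $\beta_X(y)=0$ then the constant curve at $y$ is an integral curve of $\beta_X$, so by uniqueness $\phi_t(y)=y$ for all $t$, and the converse follows by differentiating at $t=0$. Fixing $x\in\Crit(\mupb)$, Corollary \ref{slice-cor-2} supplies open invariant neighborhoods $S\subset T_xX$ of $0$ and $\Omega\subset X$ of $x$ and an $\R$--equivariant diffeomorphism $\Psi\colon S\to\Omega$ with $\Psi(0)=x$, where $t\in\R$ acts on $S$ by the linear maps $d\phi_t(x)=\exp\!\big(t\,d\beta_X(x)\big)$ and on $\Omega$ by $\phi_t$. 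By equivariance $\Psi$ carries the fixed--point set $S^{\R}=S\cap\ker d\beta_X(x)$ onto $\Crit(\mupb)\cap\Omega$, and since $\ker d\beta_X(x)$ is a linear subspace this exhibits $\Crit(\mupb)$ as a submanifold near $x$, hence globally as a disjoint union of connected submanifolds. Differentiating $\Psi\circ d\phi_t(x)=\phi_t\circ\Psi$ at $0$ gives $d\Psi_0\circ d\phi_t(x)=d\phi_t(x)\circ d\Psi_0$ on $T_xX$, so $d\Psi_0$ commutes with $d\beta_X(x)$ and thus preserves $\ker d\beta_X(x)$; being invertible it maps this subspace onto itself, whence $T_x\Crit(\mupb)=\ker d\beta_X(x)$.

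Then I would invoke $D^2\mupb(x)=d\beta_X(x)$ from Proposition \ref{linearization1}: this gives $\ker D^2\mupb(x)=\ker d\beta_X(x)=T_x\Crit(\mupb)$, which is by definition the subspace $V_0$ of the orthogonal splitting \eqref{Dec-tangente}. Since the Hessian $D^2\mupb(x)$ is symmetric, its restriction to $V_0^{\perp}=V_-\oplus V_+$ is nondegenerate, i.e.\ the Hessian of $\mupb$ is nondegenerate in the directions transverse to $\Crit(\mupb)$ at every critical point. This is exactly the Morse--Bott condition, and as $\beta\in\liep$ was arbitrary the corollary follows.

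The only step with real content is passing from the infinitesimal picture to the genuine claim that $\Crit(\mupb)$ is a submanifold with tangent space $\ker d\beta_X(x)$, and this is supplied by the slice--type linearization of Corollary \ref{slice-cor-2} (which rests on the Slice Theorem \ref{line}); so I do not expect a real obstacle, the remaining manipulations being formal. One could alternatively cite the standard fact that the fixed--point set of a smooth action of the compatible subgroup $H=\exp(\R\beta)$ is a submanifold with tangent space the fixed vectors of the isotropy representation, but Corollary \ref{slice-cor-2} is already tailored to what is needed here.
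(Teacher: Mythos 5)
Your argument is correct and follows the same route as the paper: both use Corollary \ref{slice-cor-2} to realize $\Crit(\mupb)$ as a submanifold with $T_x\Crit(\mupb)=\ker d\beta_X(x)=V_0$, and then invoke the identity $D^2\mupb(x)=d\beta_X(x)$ from Proposition \ref{linearization1} to conclude nondegeneracy of the Hessian in the normal directions. You have simply written out in detail the steps the paper leaves implicit.
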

\begin{proof}
By Corollary \ref{slice-cor-2}, we have $T_x X^\beta = V_0$ for $x\in X^\beta$. Hence, the
  first statement of Proposition \ref{linearization1} shows that the Hessian
  is non-degenerate in the normal directions.
\end{proof}
%The function $\mup^\beta$ defines a gradient flow generated by its gradient which is given by $\beta_X$.
\begin{proposition}\label{invariance-critical points}
$X^\beta$ is  $G^\beta$-invariant. Moreover, if $x\in X^{\beta}$ and $g\in G^\beta$, then $\mup^\beta (x)=\mup^\beta(gx)$.
\end{proposition}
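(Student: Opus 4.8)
The plan is to prove the two assertions in turn, the first by a direct unwinding of the definitions of $G^\beta$ and $X^\beta$, and the second by reduction to Corollary \ref{lem} via the parabolic decomposition.

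First I would show $X^\beta$ is $G^\beta$-invariant. Fix $x\in X^\beta$ and $h\in G^\beta$. Since $h$ commutes with $\exp(t\beta)$ for all $t$ (by definition of $G^\beta=G^{\beta+}\cap G^{\beta-}$, whose Lie algebra is $\lieg^\beta$, the centralizer of $\beta$), the curve $t\mapsto \exp(t\beta)(hx)=h\exp(t\beta)x=hx$ is constant, so its derivative at $t=0$, namely $\beta_X(hx)$, vanishes; hence $hx\in X^\beta$. (Equivalently, one can differentiate the identity $\beta_X(hy)=dh_y(\beta_X(y))$ coming from the fact that the fundamental vector field of an element fixed by $\Ad(h)$ is $h$-related to itself, and specialize at $y=x$.)

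Next, the energy statement: if $x\in X^\beta$ and $g\in G$, then $\mup^\beta(x)=\mup^\beta(gx)$. By Proposition \ref{parabolic-decomposition} we have $G=KG^{\beta+}$, so write $g=k\,p$ with $k\in K$ and $p\in G^{\beta+}$. By $K$-equivariance of $\mup$ and the fact that the scalar product is $\Ad(K)$-invariant, $\mup^\beta(kp x)=\langle \mup(kpx),\beta\rangle=\langle \Ad(k)\mup(px),\beta\rangle=\langle \mup(px),\Ad(k^{-1})\beta\rangle$; this is not immediately $\mup^\beta(px)$ unless $k\in K^\beta$, so I would instead argue more carefully. The cleaner route: it suffices to treat $g\in G^{\beta+}$ and $g\in K$ separately and note that a general $g$ need not decompose this way compatibly — so in fact I would use the decomposition $G^{\beta+}=G^\beta R^{\beta+}$ together with $G=KG^{\beta+}$ and handle each factor. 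For $g\in R^{\beta+}$: write $g=\exp(\xi)$-type elements generating $R^{\beta+}$; since $x\in X^\beta$ is a critical point of $\mupb$ and $R^{\beta+}$ corresponds (via Proposition \ref{tangent}) to the positive part of the Hessian, one shows $\mup^\beta$ is constant along $R^{\beta+}\cdot x$ because $\pi^{\beta+}(g)x=x$ and $\mup^\beta$ descends through $\pi^{\beta+}$; more directly, for $g\in R^{\beta+}$ one has $\lim_{t\to-\infty}\exp(t\beta)g\exp(-t\beta)=e$, and combining the flow identity with Lemma \ref{increasing} forces the value of $\mup^\beta$ to be unchanged. For $g\in G^\beta$: then $g$ commutes with $\exp(t\beta)$ and $\Ad(g)\beta=\beta$, so $\mup^\beta(gx)=\langle \mup(gx),\beta\rangle=\langle \Ad(g)\mup(x),\beta\rangle=\langle\mup(x),\beta\rangle=\mup^\beta(x)$, using $K$-equivariance on the $K^\beta$-part and the gradient-flow invariance on the $\exp(\liep^\beta)$-part (here Corollary \ref{lem} gives $\mup(\exp(\eta)x)=\mup(x)$ when $\eta\in\liep^\beta$ and $x\in X^\beta$, since $\eta_X(x)=0$). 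Finally, for the $K$-factor in $G=KG^{\beta+}$: one reduces to $\mup^\beta$ being $K^\beta$-invariant on $X^\beta$, but the leftover $K/K^\beta$ directions are absorbed because any $k\in K$ can be moved into $G^{\beta+}$-orbit computations — here I would instead use that every $g\in G$ can be written as $g=k'\,\ell$ with $\ell\in G^\beta R^{\beta+}$ and $k'\in K$, and crucially that $X^\beta$ and the value $\mup^\beta$ on it are already known $G^\beta$-invariant, so the relevant orbit $G\cdot x$ meets $X^\beta$'s fibres controlled by $\phi_\infty$; the value $\mup^\beta$ is constant on each unstable manifold $W_i^\beta$ by Theorem \ref{decomposition}, and $G\cdot x\subset$ a single such manifold when $x\in C_i=X^\beta\cap(\mupb)^{-1}(c_i)$.

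The main obstacle is precisely the interplay in the last step: a generic $g\in G$ mixes the $K$-direction with the parabolic directions, and one must confirm that $G\cdot x$ stays inside one unstable manifold $W_i^\beta$ on which $\mup^\beta$ is constant. I expect the clean argument is: $x\in X^\beta\subset C_i$ for some $i$; by Theorem \ref{decomposition}, $\mup^\beta\equiv c_i$ on $W_i^\beta$, so it suffices to show $G\cdot x\subset W_i^\beta$, i.e. $\phi_\infty(gx)\in C_i$ for all $g\in G$; and this follows because $\phi_\infty$ is the gradient flow limit, $C_i$ is $G^\beta$-invariant and $R^{\beta+}$ acts trivially in the limit (as $\pi^{\beta+}(R^{\beta+})=e$), while the $K$-factor is handled by $G=KG^{\beta+}$ and $K$-equivariance of $\mup$ together with the fact that $\|\mup\|$ — hence membership in the critical fibre structure — transforms equivariantly. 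I would present the second assertion in this streamlined form, citing Proposition \ref{parabolic-decomposition}, Theorem \ref{decomposition}, and Corollary \ref{critical}.
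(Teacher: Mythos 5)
Your first assertion (the $G^\beta$-invariance of $X^\beta$) is correct and is essentially the paper's argument. The trouble is with the second assertion, and your own hesitations there are pointing at something real: the statement for arbitrary $g\in G$ is \emph{false}, and the devices you introduce to absorb the $R^{\beta+}$- and $K$-factors cannot be repaired. Take $U^\C=\Sl(2,\C)$ acting on $Z=X=\PP^1$, $G=\Sl(2,\R)$, and $\beta=\diag(1,-1)\in\liep$. Then $X^\beta=\{[1:0],[0:1]\}$ and $\mupb$ takes distinct values at these two points (they are the maximum and minimum of the height function), yet $k=\left(\begin{smallmatrix}0&1\\-1&0\end{smallmatrix}\right)\in K$ swaps them, so $\mupb(kx)\neq\mupb(x)$. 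Likewise $g=\left(\begin{smallmatrix}1&1\\0&1\end{smallmatrix}\right)\in R^{\beta+}$ sends $[0:1]$ to $[1:1]$, where $\mupb$ takes yet another value, so $\mupb$ is not $R^{\beta+}$-invariant either; and $G\cdot[0:1]$ meets both unstable manifolds, so the inclusion $G\cdot x\subset W_i^\beta$ on which your final ``clean argument'' rests is false. The ``$g\in G$'' in the statement is a slip for ``$g\in G^\beta$'': the paper's own proof begins by taking $g\in G^\beta$ and writing $g=k\exp(\xi)$ with $k\in K^\beta$, $\xi\in\liep^\beta$, and every later use of the proposition (e.g.\ in Lemma \ref{inv2}) invokes it only for elements of $G^\beta$.

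For the case $g\in G^\beta$ your outline is the right one, but the justification of the $\exp(\liep^\beta)$-step is incorrect as written. You appeal to Corollary \ref{lem} ``since $\eta_X(x)=0$'', but $x\in X^\beta$ only gives $\beta_X(x)=0$, not $\eta_X(x)=0$ for $\eta\in\liep^\beta$; and the intermediate identity $\mup(gx)=\Ad(g)\mup(x)$ holds only for $g\in K$, not for $g=\exp(\eta)$ with $\eta\in\liep$. Indeed the full gradient map $\mup$ is in general \emph{not} constant along $t\mapsto\exp(t\eta)x$; only its $\beta$-component is. The paper's argument is the direct computation
\[
\frac{d}{dt}\,\mupb(\exp(t\eta)x)=\bigl(\beta_X(\exp(t\eta)x),\,\eta_X(\exp(t\eta)x)\bigr)=0,
\]
where the vanishing comes from the fact that $\exp(t\eta)$ preserves $X^\beta$ (your first assertion), so $\beta_X$ vanishes along the whole curve. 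Combined with $K^\beta$-equivariance for the $K^\beta$-factor, this proves the corrected statement with $g\in G^\beta$.
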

\begin{proof}
Let $x\in X^\beta$ and let $g\in G^\beta$. Since $\beta_X(gx)=(\mathrm d g)_{x}(\beta_X(x))$, it follows that $gx\in X^\beta$.
By Lemma \ref{lemcomp}, $G^\beta=K^\beta \exp(\liep^\beta)$. Hence $g=k\exp(\xi)$, where $k\in K^\beta$ and $\xi \in \liep^\beta$. By the $K$-equivariance of the $G$-gradient map we have $\mup^\beta (gx)=\mup^\beta(\exp(\xi)x)$. Let $y(t)=\mup^\beta (\exp(t\xi)x)$. Then
\[
\dot{y} (t)=(\beta_X (\exp(t\xi)x), \beta_X (\exp(t\xi)x) ).
\]
By the first part of the proof, $\beta_X (\exp(\xi)x)=0$, and so $
\dot{y} (t)=0$. This implies $\mup^\beta (gx)=\mup^\beta(\exp(\xi)x)=\mup^\beta (x)$, concluding the proof.
\end{proof}
\begin{proposition}\label{gradient-restriction}
The restriction $(\mup)_{|_{X^\beta}}$ takes value on $\liep^\beta$ and so it coincides with= $(\mu_{\liep^\beta})_{|_{X^\beta}}$.
\end{proposition}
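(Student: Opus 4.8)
The plan is to prove the slightly stronger statement that $\mup(x)\in\liep^\beta$ for every $x\in X^\beta$; the proposition follows immediately, since $\mu_{\liep^\beta}=\pi_{\liep^\beta}\circ\mup$ and $\pi_{\liep^\beta}$ restricts to the identity on $\liep^\beta$. To get the inclusion I would go back to the $U$-momentum map $\mu$ itself. Write $\beta=i\zeta$ with $\zeta\in\liu$. Since the $U^\C$-action is holomorphic, the fundamental vector fields satisfy $\beta_Z=J\zeta_Z$ (this is the same normalization that gives $\mathrm{grad}\,\mup^\beta=\beta_Z$ for $\mup^\beta=\mu^{-i\beta}$), so the hypothesis $\beta_X(x)=0$, i.e. $\beta_Z(x)=0$ in $T_xZ$, forces $\zeta_Z(x)=0$; hence $x$ is fixed by the one-parameter subgroup $\exp(t\zeta)\subset U$. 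By $U$-equivariance of $\mu$ we get $\Ad(\exp(t\zeta))\mu(x)=\mu(\exp(t\zeta)x)=\mu(x)$ for all $t$, so $[\zeta,\mu(x)]=0$, and therefore $[\beta,i\mu(x)]=0$, i.e. $i\mu(x)\in(i\liu)^\beta$.

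The second step is to transfer this commutation through the orthogonal projection $\pi_\liep\colon i\liu\to\liep$ defining the gradient map, so that $\mup(z)=\pi_\liep(i\mu(z))$. The key algebraic point is that $\ad\beta$ is a \emph{symmetric} endomorphism of $\liu^\C$ for the chosen inner product (this uses $\beta\in i\liu$ together with the convention that multiplication by $i$ is an isometry $\liu\to i\liu$), and that $\liep$ is invariant under $(\ad\beta)^2$ (because $[\liep,\liep]\subset\liek$ and $[\liek,\liep]\subset\liep$). Consequently $\liep^\perp\cap i\liu$ is $(\ad\beta)^2$-invariant as well, and $\pi_\liep$ commutes with $(\ad\beta)^2$ on $i\liu$. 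Since $i\mu(x)$ is killed by $\ad\beta$ and a fortiori by $(\ad\beta)^2$, we obtain $(\ad\beta)^2\mup(x)=\pi_\liep\bigl((\ad\beta)^2 i\mu(x)\bigr)=0$; as $\ad\beta$ is diagonalizable with real eigenvalues, $\ker(\ad\beta)=\ker((\ad\beta)^2)$, so $[\beta,\mup(x)]=0$, i.e. $\mup(x)\in\liep^\beta$, which is what we want.

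The substantive content is elementary; the delicate points are purely bookkeeping. First, one must pin down the sign in $\beta_Z=J\zeta_Z$ so that $\zeta_Z(x)=0$ really follows from $\beta_X(x)=0$, using the consistency check with $\mathrm{grad}\,\mup^\beta=\beta_Z$. Second — and this is the step I expect to need the most care — one must justify that $\ad\beta$ is symmetric for $\beta\in i\liu$ and that $\pi_\liep$ therefore commutes with $(\ad\beta)^2$. An alternative, essentially equivalent, route avoids the last paragraph of linear algebra by invoking the facts recorded just before the statement: the computation above shows $\mu(x)=\mu_{\liu^{i\beta}}(x)$ for $x\in X^\beta$, and then $\mup(x)=\pi_\liep(i\mu(x))$ lands in $\liep\cap i\,\liu^{i\beta}=\liep^\beta$ because $\liep$ splits orthogonally along the eigenspaces of $(\ad\beta)^2$, so that $\pi_\liep$ maps the zero-eigenspace $i\,\liu^{i\beta}$ into $\liep\cap\ker(\ad\beta)$. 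Either way, the proof is short.
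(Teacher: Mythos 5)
Your argument is correct, and its first half is exactly the paper's: both proofs begin by observing that $\beta_X(x)=0$ forces $x$ to be fixed by the compact one-parameter group $\exp(t\,i\beta)\subset U$ (via holomorphy of the action), whence $U$-equivariance of $\mu$ gives $[\mu(x),i\beta]=0$, i.e.\ $i\mu(x)$ centralizes $\beta$; and both then reduce the proposition to showing that the orthogonal projection of $i\liu$ onto $\liep$ carries the centralizer of $\beta$ into $\liep^\beta$. Where you diverge is in how that last projection step is justified. The paper introduces the $\mathrm{Ad}(U^\C)$-invariant form $B$ (positive definite on $i\liu$, negative definite on $\liu$), proves the $B$-orthogonal decomposition $\liu^\C=\lieg\oplus\lieg^\perp$ satisfies $[\lieg,\lieg^\perp]\subset\lieg^\perp$, and deduces directly that if $X_1+X_2\in(\liu^\C)^\beta$ with $X_1\in\lieg$, $X_2\in\lieg^\perp$, then $[X_2,\beta]\in\lieg\,$ while $B([X_2,\beta],\lieg)=B(X_2,[\beta,\lieg])=0$, so $[X_1,\beta]=0$; it then identifies this $B$-projection restricted to $i\liu$ with $\pi_\liep$. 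You instead work with the positive definite product on $\liu^\C$, use that $\ad\beta$ is symmetric for $\beta\in i\liu$ and that $\ad\beta$ swaps $\liu$ and $i\liu$ (hence the detour through $(\ad\beta)^2$, which does preserve $i\liu$ and $\liep$ and therefore commutes with $\pi_\liep$), and finish with $\ker(\ad\beta)=\ker\bigl((\ad\beta)^2\bigr)$. Your route is more elementary in that it never needs the invariant form $B$ or the reductive complement $\lieg^\perp$ --- only that $\liep$ is an $(\ad\beta)^2$-invariant subspace of $i\liu$ and that $\ad\beta$ is symmetric; the paper's route has the advantage of working with $\ad\beta$ itself and of setting up the form $B$ and the decomposition $\liu^\C=\lieg\oplus\lieg^\perp$, which it reuses implicitly elsewhere. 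The one point you rightly flag --- symmetry of $\ad\beta$ for $\beta\in i\liu$ --- does hold: $\ad\beta=J\circ\ad\zeta$ with $\zeta=-i\beta\in\liu$, and both $J$ and $\ad\zeta$ are skew-symmetric and commute, so their composite is symmetric.
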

\begin{proof}
If $G=U^\C$ then the proof is easy to check. Indeed, let $x\in Z^\beta=Z^{\textbf{i}\beta}$. Since $\exp(t\textbf{i}\beta)x=x$, by the $U$-equivariance of the momentum map, we have
\[
\mu(x)=\mu(\exp(t\textbf{i}\beta)x)=\mathrm{Ad}(\exp(t\textbf{i}\beta))(\mu(x)).
\]
Taking the limit, we get $[\mu(x),i\beta]=0$. In the general case, it suffices to prove that $\textbf{i}\mu(x) \in \textbf{i}\liu^\beta$ implies that its orthogonal projection onto $\liep$ belongs to $\liep^\beta$.

Since $\beta \in \lieg$, it follows that $\mathrm{ad}(\beta)$ leaves $\lieg$ invariant. By remark \ref{splitting}, it follows that $\liu^\C=\lieg \oplus \lieg^\perp$ with respect to $\scalo$ and so $\mathrm{ad}(\beta)$ leaves also invariant $\lieg^\perp$. Moreover, keeping in mind that $\lieg=\liek\oplus \liep$ is an orthogonal decomposition with respect to $\scalo$, it follows that
\[
\lieg^\perp=\liek^\perp \cap \liu \oplus \liep^\perp \cap \textbf{i} \liu.
\]
Suppose $\textbf{i}\mu(x)=a+b \in \textbf{i}\liu^\beta$, where $a \in \liep$ and $b\in \liep^\perp \cap \textbf{i}\liu$. Then $b\in \lieg^\perp$ and
\[
0=[\beta,\textbf{i}\mu(x)]=[\beta,a]+[\beta,b],
\]
and so $[\beta,a]\in \lieg$ and $[\beta,b]\in \lieg^\perp$. This implies $[\beta,b]=0$ concluding the proof. We thank the referee for pointing out this
short argument.
\end{proof}
\subsection{The Kempf-Ness Function}\label{Kempf-Ness Function}
Given $G$ a real reductive group which acts smoothly on $Z;$ $G = K\text{exp}(\mathfrak{p}),$ where $K$ is a maximal compact subgroup of $G.$ Let $X$ be a $G$-invariant locally closed submanifold of $Z.$ As Mundet pointed out in \cite{MUNDET}, there exists a function $\Phi : X \times G \rightarrow \mathbb{R},$ such that
$$
\langle \mu_\mathfrak{p}(x), \xi\rangle = \desudtzero \Phi (x, \exp(t\xi)), \qquad \xi \in \mathfrak{p},
$$ and satisfying the following conditions:

\begin{enumerate}
    \item For any $x\in X,$ the function $\Phi (x, .)$ is smooth on $G.$
    \item The function $\Phi(x, .)$ is left-invariant with respect to $K,$ i.e., $\Phi(x, kg) = \Phi(x, g).$
    \item For any $x\in X,$ $v\in \mathfrak{p}$ and $t\in \mathbb{R};$

    $$\frac{d^2}{dt^2}\Phi (x, \exp(tv)) \geq 0.$$
    Moreover: $$\frac{d^2}{dt^2}\Phi (x, \exp(tv)) = 0$$ if and only if $\exp(\mathbb{R}v)\subset G_x.$

    \item For any $x\in X,$ and any $g, h \in G;$
    $$\Phi(x, hg) = \Phi(x, g) + \Phi(gx, h).$$ This equation is called the cocycle condition. Finally, using the cocycle condition, we have
\[
\desudt \Phi(x,\exp(t\beta))=\langle \mup(\exp(t\xi)x),\beta\rangle.
\]
\end{enumerate}
The function $\Phi : X \times G \rightarrow \mathbb{R}$ is called the Kempf-Ness function for $(X, G, K)$. It is just the restriction of the classical Kempf-Ness function $Z\times U^\C \lra \R$ considered in \cite{MUNDET, Teleman} to $X\times G$  \cite{LM}.

Let $M = G/K$ and $\pi : G \rightarrow M.$ $M$ is a symmetric space of non-compact type \cite{helgason}. Let $x\in X$. By property $(b),$ the function $g\mapsto \Phi(x,g^{-1})$  descends to a function on $M$:
$$\Phi_x : M \lra \mathbb{R}, \qquad \Phi_x (gK):=\Phi(x,g^{-1}).$$
\begin{lemma}\cite{Properties of Gradient map}\label{differential}
For $x\in X$, the differential of $\Phi_x :M \lra \R$ is given as
$$
d(\Phi_x)_{\pi(g)}(v_x) = -\langle \mu_\mathfrak{p}(g^{-1}x), \xi\rangle,
$$ where, $v_x(g) = (\mathrm d \pi)_{g} \left( (\mathrm d L_g)_e (\xi) \right)$ and $\xi \in \mathfrak{p}$.
Therefore, $\nabla \Phi_x(\pi(g)) = -(\mathrm d \pi)_{g} \left(dL_g(\mu_\mathfrak{p}(g^{-1}x))\right),$ where the grad is computed with respect to the $G$-invariant Riemannian metric on $M$ induced by $\scalo$ restricted to $\liep$.
\end{lemma}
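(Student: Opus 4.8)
The plan is to read the differential straight off the two structural properties of the Kempf--Ness function recalled above: the defining identity $\langle\mup(y),\xi\rangle=\frac{d}{dt}\Big|_{t=0}\Phi(y,\exp(t\xi))$ for $\xi\in\liep$, and the cocycle condition $\Phi(y,hg)=\Phi(y,g)+\Phi(gy,h)$. The first step is to recognize the tangent vector $v_x(g)=(\mathrm d\pi)_g\big((\mathrm dL_g)_e(\xi)\big)$, $\xi\in\liep$, as the velocity at $t=0$ of the curve $t\mapsto\pi\big(g\exp(t\xi)\big)$ in $M=G/K$: this is immediate from the chain rule, since $\frac{d}{dt}\Big|_{t=0}g\exp(t\xi)=(\mathrm dL_g)_e(\xi)$ and $v_x(g)$ is by definition the image of this vector under $(\mathrm d\pi)_g$.

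Granting this, and using $\Phi_x(\pi(h))=\Phi(x,h^{-1})$ together with $(g\exp(t\xi))^{-1}=\exp(-t\xi)\,g^{-1}$, we get
\[
\mathrm d(\Phi_x)_{\pi(g)}(v_x(g))=\frac{d}{dt}\Big|_{t=0}\Phi\big(x,\exp(-t\xi)\,g^{-1}\big).
\]
Next I apply the cocycle condition with $h=\exp(-t\xi)$ and the group element $g^{-1}$ playing the role of the ``$g$'' in the identity: $\Phi(x,\exp(-t\xi)g^{-1})=\Phi(x,g^{-1})+\Phi(g^{-1}x,\exp(-t\xi))$. The first summand does not depend on $t$; differentiating the second at $t=0$ and invoking the defining identity at the point $g^{-1}x$ (the minus sign coming from the $-t\xi$) yields $\mathrm d(\Phi_x)_{\pi(g)}(v_x(g))=-\langle\mup(g^{-1}x),\xi\rangle$, which is the first assertion.

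For the gradient formula, I would use that the Riemannian metric on the symmetric space $M=G/K$ is the $G$-invariant metric whose value at the base point $\pi(e)$ is the restriction of $\scalo$ to $\liep$ (this restriction is $\Ad(K)$-invariant because $K\subset U$ and $\scalo$ is $\Ad(U)$-invariant). Under the resulting identification $\xi\mapsto v_x(g)=(\mathrm d\pi)_g((\mathrm dL_g)_e(\xi))$ of $\liep$ with $T_{\pi(g)}M$, left translation is an isometry, so $\langle v_x(g),w\rangle_{\pi(g)}=\langle\xi,\eta\rangle$ whenever $w=(\mathrm d\pi)_g((\mathrm dL_g)_e(\eta))$. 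Writing $\nabla\Phi_x(\pi(g))=(\mathrm d\pi)_g((\mathrm dL_g)_e(\zeta))$ and comparing with the formula just obtained for the differential forces $\langle\zeta,\xi\rangle=-\langle\mup(g^{-1}x),\xi\rangle$ for all $\xi\in\liep$, hence $\zeta=-\mup(g^{-1}x)$ and $\nabla\Phi_x(\pi(g))=-(\mathrm d\pi)_g\big((\mathrm dL_g)_e(\mup(g^{-1}x))\big)$.

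The computation itself is short; the one place that genuinely needs care is the bookkeeping of inverses and signs — the inversion $g\mapsto g^{-1}$ built into the definition of $\Phi_x$ on $M$, and the $-t\xi$ produced when one inverts $g\exp(t\xi)$ — together with making sure that the identification of $T_{\pi(g)}M$ with $\liep$ used in the statement is precisely the one for which left translation is an isometry, so that no $\Ad$-twist sneaks into the gradient. As a consistency check, evaluating at $g=e$ recovers the original relation $\langle\mup(x),\xi\rangle=\desudtzero\Phi(x,\exp(t\xi))$.
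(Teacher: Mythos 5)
Your argument is correct: identifying $v_x(g)$ as the velocity of $t\mapsto\pi(g\exp(t\xi))$, unwinding the inversion to get $\Phi(x,\exp(-t\xi)g^{-1})$, splitting off the $t$-independent term via the cocycle condition, and then dualizing through the left-invariant metric on $G/K$ is exactly the intended derivation. The paper itself gives no proof here --- it cites the companion paper \cite{Properties of Gradient map} --- but your computation is the standard one that reference carries out, and your bookkeeping of the inverse and of the resulting sign is right.
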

\begin{theorem}\cite{Properties of Gradient map}\label{ConvexKempf}
Assume that $X$ is compact. Let $x\in X$ and let $\Phi_x : M \to \mathbb{R}.$ Then
\begin{enumerate}
    \item $\Phi_x$ is a Morse-Bott function and it is convex along geodesics.
    \item If $\gamma : \mathbb{R}\to M$ is a negative gradient flow of $\Phi_x,$ then, $$\lim_{t\to \infty}\Phi_x(\gamma(t)) = \text{inf}_{x\in M}\Phi_x.$$
\end{enumerate}
\end{theorem}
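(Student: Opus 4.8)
The plan is to use that $M=G/K$ is a Hadamard symmetric space and that $\Phi_x$ is controlled by properties (a)--(d) of the Kempf--Ness function. Recall that $\pi\circ\exp\colon\liep\to M$ is a diffeomorphism, that every geodesic of $M$ through $\pi(g)$ has the form $t\mapsto\pi(g\exp(tv))$ with $v\in\liep$, and identify $T_{\pi(g)}M\cong\liep$ in the usual way, so that this geodesic has velocity $v$ at $t=0$. Along it, the cocycle condition (d) gives
\[
\Phi_x(\pi(g\exp(tv)))=\Phi(x,\exp(-tv)g^{-1})=\Phi(x,g^{-1})+\Phi(g^{-1}x,\exp(-tv)),
\]
so $t\mapsto\Phi_x(\pi(g\exp(tv)))$ differs by a constant from $t\mapsto\Phi(g^{-1}x,\exp(-tv))$, which is convex by property (c) and strictly convex unless $\exp(\R v)\subset G_{g^{-1}x}$. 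This is the ``convex along geodesics'' half of (a).

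For the Morse--Bott half, Lemma \ref{differential} shows $\pi(g)\in\Crit(\Phi_x)$ if and only if $\mup(g^{-1}x)=0$; hence $\Crit(\Phi_x)=\emptyset$ unless $x$ is polystable, in which case the Morse--Bott condition is vacuous. Assume $x$ polystable and let $\pi(g)$ be a critical point; put $y:=g^{-1}x$, so $\mup(y)=0$. By the cocycle identity, $\Phi_x(\pi(g\exp(tv)))=\Phi(x,g^{-1})+\Phi(y,\exp(-tv))$ for $v\in\liep$, so since $\pi(g)$ is critical its Hessian satisfies
\[
D^2\Phi_x(\pi(g))(v,v)=\frac{d^2}{ds^2}\Big|_{s=0}\Phi(y,\exp(sv))\geq 0
\]
by property (c), with equality if and only if $\exp(\R v)\subset G_y$. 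Since $K\times\liep\to G$ is a diffeomorphism and, by Proposition \ref{stabilizer of a stable point}, $G_y=K_y\exp(\liep_y)$, for $v\in\liep$ this holds exactly when $v\in\liep_y$. Thus $D^2\Phi_x(\pi(g))$ is positive semidefinite with kernel precisely $\liep_y$.

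It remains to check that $\Crit(\Phi_x)$ is an embedded submanifold whose tangent space at $\pi(g)$ is this $\liep_y$. For $\xi\in\liep_y$ one has $(g\exp\xi)^{-1}x=\exp(-\xi)y=y$ (as $\exp(-\xi)\in G_y$), so $\pi(g\exp\xi)\in\Crit(\Phi_x)$; conversely, if $\pi(h)$ is a critical point near $\pi(g)$ then $h^{-1}x\in G\cdot y\cap\mup^{-1}(0)=K\cdot y$ by Proposition \ref{stabilizer of a stable point}, and unwinding this (together with $G_y=\exp(\liep_y)K_y$) puts $\pi(h)$ in $\{\pi(g\exp\xi):\xi\in\liep_y\}$. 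Hence near $\pi(g)$ the set $\Crit(\Phi_x)$ is the $g$-translate of the $\pi\circ\exp$-image of the linear subspace $\liep_y\subset\liep$, an embedded totally geodesic submanifold with tangent space $\liep_y$ at $\pi(g)$; this coincides with the kernel of $D^2\Phi_x(\pi(g))$, so $\Phi_x$ is Morse--Bott, completing (a). I expect this step --- identifying the critical set as a submanifold and matching its tangent space with the kernel of the Hessian, which rests on Proposition \ref{stabilizer of a stable point} and the uniqueness of the Cartan decomposition of $G$ --- to be the main obstacle; the convexity in (a) is immediate from the cocycle identity and property (c).

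For (b), put $\iota=\inf_M\Phi_x\in[-\infty,\infty)$ and let $\gamma$ be a negative gradient flow line, so $\frac{d}{dt}\Phi_x(\gamma(t))=-\|\nabla\Phi_x(\gamma(t))\|^2\leq 0$ and $\Phi_x(\gamma(t))$ decreases to some $L\geq\iota$; it suffices to exclude $L>\iota$. If $L>\iota$, pick $q\in M$ with $\Phi_x(q)<L$ and let $u(t)\in T_{\gamma(t)}M$ be the initial velocity of the geodesic from $\gamma(t)$ to $q$, so $\|u(t)\|=d(\gamma(t),q)$. The first variation formula gives $\frac{d}{dt}\tfrac12 d(\gamma(t),q)^2=\langle\nabla\Phi_x(\gamma(t)),u(t)\rangle$, while the first-order convexity inequality for $\Phi_x$ on the Hadamard manifold $M$ gives $\langle\nabla\Phi_x(\gamma(t)),u(t)\rangle\leq\Phi_x(q)-\Phi_x(\gamma(t))\leq\Phi_x(q)-L<0$. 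Hence $\tfrac12 d(\gamma(t),q)^2$ decreases at a rate bounded away from zero, which is impossible; so $L=\iota$ (the same dichotomy forces $L=\iota=-\infty$ when $\iota=-\infty$). Throughout, compactness of $X$ is used only to guarantee that $\Phi_x$, the gradient map, and the structural results invoked above are globally available.
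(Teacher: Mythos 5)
The paper does not actually prove this theorem: it is imported verbatim from \cite{Properties of Gradient map}, so there is no in-text argument to compare against. Your proof is correct and follows the standard route one would expect (and which the cited reference essentially takes): convexity along geodesics via the cocycle identity plus property (c) of the Kempf--Ness function; the Morse--Bott property by identifying $\Crit(\Phi_x)$ with the image of the linear subspace $\liep_y$ under $\xi\mapsto\pi(g\exp\xi)$ using Proposition \ref{stabilizer of a stable point} and uniqueness of the Cartan decomposition, and matching this with the kernel of the Hessian; and part (b) by the usual Hadamard-manifold comparison argument, differentiating $\tfrac12 d(\gamma(t),q)^2$ and invoking the first-order convexity inequality. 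The only point worth flagging is your use of property (c) at $s=0$: as stated it equates identical vanishing of the second derivative with $\exp(\R v)\subset G_x$, but since that second derivative equals $\|v_X(\exp(sv)x)\|^2$ its vanishing at $s=0$ already forces $v_X(x)=0$ and hence identical vanishing, so your pointwise application is legitimate.
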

\section{Stability and Maximal Weight Function}\label{analytic}
Let $(Z,\omega)$ be a \Keler manifold and $U^\C$ acts holomorphically on $Z$ with a momentum map $\mu : Z \to \mathfrak{u}.$ Let $G\subset U^\C$ be a closed compatible subgroup. $G = K\exp(\mathfrak{p}),$ where $K := G\cap U$ is a maximal compact subgroup of $G$ and $\mathfrak{p} := \mathfrak{g}\cap \text{i}\mathfrak{u};$ $\mathfrak{g}$ is the Lie algebra of $G.$

Suppose $X\subset Z$ is a $G$-stable locally closed connected real submanifold of $Z$ with the gradient map $\mu_\mathfrak{p} : X\to \mathfrak{p}.$ We recall that by $G_x$ and $K_x$, we denote the stabilizer subgroup of $x\in X$ with respect to the $G$-action and the $K$-action respectively and by $\lieg_x$ and $\liek_x$ their respective Lie algebras.
\begin{definition}
Let $x\in X.$ Then:
\begin{enumerate}
\item $x$ is stable if $G\cdot x \cap \mu_\liep^{-1}(0) \neq \emptyset$ and $\lieg_x$ is conjugate to a Lie subalgebra of $\liek.$
\item $x$ is polystable if $G\cdot x \cap \mu_\liep^{-1}(0) \neq \emptyset.$
\item $x$ is semistable if $\overline{G\cdot x} \cap \mu_\liep^{-1}(0) \neq \emptyset.$
\end{enumerate}
\end{definition}
We denote by $X^s_{\mup}$, $X^{ss}_{\mup}$, $X^{ps}_{\mup}$ the set of stable, respectively semistable, polystable, points.
It follows directly from the definitions above that the conditions are $G$-invariant in the sense that if a point satisfies one of the conditions, then every point in its orbit satisfies the same condition, and for stability, recall that $\lieg_{gx} = \text{Ad}(g)(\lieg_x).$

One may define a relation $\sim$ on $X^{ss}_{\mup}$ where $x\sim y$ if $\mup^{-1}(0)\cap \overline{G\cdot x}\cap \overline{G\cdot y} \neq \emptyset$. This relation is indeed an equivalence relation \cite{PG} and we denote the corresponding quotient by $X^{ss}_{\mup}//G$ and call it the topological Hilbert quotient of $X^{ss}_{\mup}$ by the action of $G$. Let $\pi:X^{ss}_{\mup} \lra X^{ss}_{\mup}//G$ denote the quotient map. The results of Heinzner-Schwarz-St\"otzel \cite[Quotient Theorem p.164]{heinzner-schwarz-stoetzel} and \cite{PG,heinzner-stoetzel}, show the following theorem.
\begin{theorem}
The subsets $X_{\mup}^s$, $X_{\mup}^{ss}$ are open subset in $X$. Moreover, the topological Hilbert quotient $X^{ss}_{\mup}//G$ has the following properties.
\begin{enumerate}
\item Every fiber contains a unique closed $G$-orbit. Any other orbit in the fiber has strictly larger  dimension.
\item The closure of every $G$-orbit in a fiber of $\pi$ contains the closed $G$-orbit.
\item every fiber of $\pi$ intersects $\mup^{-1}(0)$ in a unique $K$-orbit which lies in the unique closed $G$-orbit.
\item the inclusion $\mup^{-1}(0) \hookrightarrow X$ induces a homeomorphism $\mup^{-1}(0) /K \cong X^{ss}_{\mup} //G$.
\end{enumerate}
\end{theorem}
Therefore $X^{ss}_{\mup}//G$ can be identified with the space of polystable orbits. On the other hand the set of polystable points is in general neither open nor closed.
The following result establishes a relation between the Kempf-Ness function and the polystability condition.
\begin{proposition}\label{polystable-I}
Let $x\in X$ and let $g\in G$. The following conditions are equivalent:
\begin{enumerate}
\item $\mup(gx)=0$.
\item $g$ is a critical point of $\Phi(x,\cdot)$.
\item $g^{-1}K$ is a critical point of $\Phi_x$.
\end{enumerate}
\end{proposition}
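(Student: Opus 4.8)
The plan is to compute first derivatives of the Kempf--Ness function $\Phi$ and read both critical‑point conditions directly off the gradient map. I would use only three ingredients: the defining identity $\langle\mup(y),\xi\rangle=\frac{\mathrm d}{\mathrm dt}\big|_{t=0}\Phi(y,\exp(t\xi))$ for $\xi\in\liep$; the left $K$‑invariance $\Phi(y,kh)=\Phi(y,h)$, which makes $t\mapsto\Phi(y,\exp(t\xi))$ constant for $\xi\in\liek$ and hence kills the derivative in $\liek$‑directions; and the cocycle condition $\Phi(x,hg)=\Phi(x,g)+\Phi(gx,h)$.

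For the equivalence (a)$\Leftrightarrow$(b): the vectors $\frac{\mathrm d}{\mathrm dt}\big|_{t=0}\exp(t\xi)g$, as $\xi$ runs over $\lieg$, span $T_gG$, so it suffices to test $d\Phi(x,\cdot)_g$ against the curves $t\mapsto\exp(t\xi)g$. By the cocycle condition $\Phi(x,\exp(t\xi)g)=\Phi(x,g)+\Phi(gx,\exp(t\xi))$, so the derivative at $t=0$ is $d\Phi(gx,\cdot)_e(\xi)$; writing $\xi=\xi_\liek+\xi_\liep$ and using linearity together with the two facts above, this equals $\langle\mup(gx),\xi_\liep\rangle$. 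Since $\xi_\liep$ ranges over all of $\liep$, the differential $d\Phi(x,\cdot)_g$ vanishes if and only if $\mup(gx)=0$.

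For the equivalence (b)$\Leftrightarrow$(c): I would note that $\Phi(x,\cdot)=\Phi_x\circ q$, where $q:G\to M=G/K$ is the map $q(h)=h^{-1}K$, a surjective submersion (it is $\pi$ composed with the inversion diffeomorphism). Hence $d\Phi(x,\cdot)_g=d(\Phi_x)_{q(g)}\circ dq_g$ with $dq_g$ surjective, so $g$ is critical for $\Phi(x,\cdot)$ exactly when $q(g)=g^{-1}K$ is critical for $\Phi_x$. Alternatively, (c)$\Leftrightarrow$(a) is immediate from Lemma \ref{differential}: evaluating $\nabla\Phi_x(\pi(h))=-(\mathrm d\pi)_h\bigl(dL_h(\mup(h^{-1}x))\bigr)$ at $h=g^{-1}$ gives $\nabla\Phi_x(g^{-1}K)=-(\mathrm d\pi)_{g^{-1}}\bigl(dL_{g^{-1}}(\mup(gx))\bigr)$, and since $\mup(gx)\in\liep$ and $(\mathrm d\pi)_{g^{-1}}\circ(dL_{g^{-1}})_e$ is injective on $\liep$, this vanishes precisely when $\mup(gx)=0$.

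The only point requiring care is that the left $K$‑invariance of $\Phi(x,\cdot)$ does \emph{not} make every $g\in G$ a critical point, so one genuinely must test against the full tangent space $T_gG$ (equivalently, descend to $M$, where $\Phi_x$ is a Morse--Bott function by Theorem \ref{ConvexKempf}); once this is handled the argument is routine, and I do not expect any serious obstacle.
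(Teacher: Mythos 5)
Your proposal is correct and uses the same ingredients as the paper, which simply cites Lemma \ref{differential} for (a)$\Leftrightarrow$(c) and the left $K$-invariance (descent to $M=G/K$) for (b)$\Leftrightarrow$(c); your direct computation of (a)$\Leftrightarrow$(b) via the cocycle condition just re-derives the content of Lemma \ref{differential} at the level of $G$ rather than $M$, and you include the paper's route as your stated alternative. No gaps.
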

\begin{proof}
Lemma \ref{differential} proves $(a)$ is equivalent to $(c)$. Since $\Phi(x,\cdot)$ is $K$-invariant it follows that $(b)$ is equivalent to $(c)$, concluding the proof.
\end{proof}
\begin{proposition}
Let $x\in X$.
\begin{itemize}
\item If $x$ is polystable, then $G_x$ is reductive.
\item If $x$  is stable, then $G_x$ is compact.
\end{itemize}
\end{proposition}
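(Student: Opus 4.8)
The plan is to reduce both assertions to the case $\mup=0$ and then invoke Proposition~\ref{stabilizer of a stable point}. Since $x$ is polystable there is $g\in G$ with $\mup(gx)=0$; put $y:=gx$, so that $G_y=gG_xg\meno$ and it suffices to prove the statements for $G_y$. By Proposition~\ref{stabilizer of a stable point}, $G_y=K_y\exp(\liep_y)$ is a closed compatible subgroup of $G$; in particular (Section~\ref{comp-subgrous}) its Lie algebra $\lieg_y=\liek_y\oplus\liep_y$ is reductive and the Cartan map $K_y\times\liep_y\to G_y$ is a diffeomorphism. Hence $G_y$, and therefore its conjugate $G_x$, is reductive, which proves the first bullet.

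For the second bullet it is enough, by the diffeomorphism $K_y\times\liep_y\to G_y$ together with compactness of $K_y\subset K$, to show that the stability hypothesis forces $\liep_y=\{0\}$. Since $x$ is stable, $\lieg_x$ is $\Ad(G)$-conjugate to a subalgebra of $\liek$, and as $\lieg_y=\Ad(g)\lieg_x$ the same holds for $\lieg_y$: there is $a\in G$ with $\Ad(a)\lieg_y\subset\liek$. Fix $v\in\liep_y$. The one-parameter subgroup $\exp(\R v)$ lies in $\exp(i\liu)$, and since the Cartan decomposition $U\times i\liu\to U^\C$ is a diffeomorphism, $\exp\restr{i\liu}$ is a closed embedding; thus $\exp(\R v)$ is a closed subset of $U^\C$, homeomorphic to $\R$ whenever $v\neq0$. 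On the other hand $\Ad(a)v\in\liek$, so $a\exp(\R v)a\meno=\exp(\R\,\Ad(a)v)$ is a one-parameter subgroup of the compact group $K$, hence relatively compact; since conjugation by $a$ is a homeomorphism of $U^\C$, $\exp(\R v)$ is relatively compact as well. A closed subset homeomorphic to $\R$ cannot be relatively compact, so $v=0$, i.e. $\liep_y=\{0\}$; then $G_y=K_y$ is compact and so is $G_x$.

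A variant of the last step, avoiding closures: writing $a=k\exp(\zeta)$ with $k\in K$ and $\zeta\in\liep$ and applying $\Ad(k\meno)$ (which preserves $\liu$), the inclusion $\Ad(a)\liep_y\subset\liu$ becomes $e^{\ad\zeta}(\liep_y)\subset\liu$; since $\zeta\in i\liu$ the operator $\ad\zeta$ interchanges $\liu$ and $i\liu$, so $e^{\ad\zeta}=\cosh(\ad\zeta)+\sinh(\ad\zeta)$ with $\cosh(\ad\zeta)(i\liu)\subset i\liu$ and $\sinh(\ad\zeta)(i\liu)\subset\liu$; for $v\in\liep_y\subset i\liu$ the condition $e^{\ad\zeta}v\in\liu$ yields $\cosh(\ad\zeta)v=0$, and $\cosh(\ad\zeta)$ is invertible because $\ad\zeta$ is self-adjoint for a suitable $\Ad(U)$-invariant inner product on $\liu^\C$, whence $v=0$.

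The genuinely substantive point is the second bullet: converting the abstract conjugacy of $\lieg_x$ into $\liep_y=\{0\}$. Everything else — the reduction via $G$-invariance of the stability notions, and reductivity of compatible subgroups — is immediate from results already established; the crux is that a nonzero element of $\liep$ exponentiates to a closed $\R$-subgroup of $U^\C$ that remains non-relatively-compact, hence cannot be conjugated into the compact group $K$.
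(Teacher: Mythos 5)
Your proof is correct and takes essentially the same route as the paper: reduce to the case $\mup(x)=0$ via $G_{gx}=gG_xg^{-1}$, then invoke Proposition \ref{stabilizer of a stable point} to get $G_x=K_x\exp(\liep_x)$ compatible, hence reductive. The only difference is that you make explicit why conjugacy of $\lieg_x$ into $\liek$ forces $\liep_x=\{0\}$ and hence compactness in the stable case --- a step the paper's two-line proof leaves implicit --- and both of your arguments for that point (the closed noncompact one-parameter subgroup, and the $\cosh(\ad\zeta)$ invertibility) are sound.
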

\begin{proof}
Assume $\mup(x)=0$. By Lemma \ref{stabilizer of a stable point}, $G_x$ is compatible. Therefore if $x$ is stable, respectively polystable, then $G_x$ is compact, respectively reductive.
Since $G_{gx}=gG_xg^{-1}$, the result follows.
\end{proof}
\subsubsection{Maximal Weight Function}
In this section, we introduce the numerical invariants $\la(x,\beta)$ associated to an element $x\in X$ and $\beta \in \mathfrak{p}.$

For any $t\in \mathbb{R},$ define $\la(x,\beta,t) = \langle\mu_\mathfrak{p}(\exp(t\beta)x), \beta\rangle.$
$$
\la(x,\beta,t) = \langle\mu_\mathfrak{p}(\exp(t\beta)x), \beta\rangle = \frac{d}{dt}\Phi(x, \exp(t\beta)),
$$ where $\Phi: X\times G\to \mathbb{R}$ is the Kempf-Ness function. By the properties of the Kempf-Ness function,
$$
\frac{d}{dt}\la(x,\beta,t) = \frac{d^2}{dt^2}\Phi(x, \exp(t\beta)) \geq 0.
$$
This means that $\la(x,\beta,t)$ is a non decreasing function as a function of $t.$

\begin{definition}
The maximal weight of $x\in X$ in the direction of $\beta \in \mathfrak{p}$ is the numerical value
$$
\la(x,\beta) = \lim_{t\to \infty}\la(x,\beta,t)\in \mathbb{R}\cup\{\infty\}.
$$
\end{definition}
From the proof of Lemma \ref{increasing} we have
$$\frac{d}{dt}\la(x,\beta,t) = \parallel\beta_X(\exp(t\beta) x)\parallel^2,
$$ 
and so,
$$
\la(x,\beta,t) =  \langle\mu_\mathfrak{p}(x), \beta\rangle + \int_0^t\parallel\beta_X(\exp(s\beta) x)\parallel^2 \mathrm{ds}.
$$
For any $x\in X$ and $\beta \in \liep$, we consider the curve $c_x^\beta:[0,+\infty) \lra X$ defined by $c_x^\beta (t)=\exp(t\beta)x$. The energy functional of the curve  $c_x^\beta$ is given by
$$
E(c_x^\beta) = \int_0^{+\infty}\parallel \beta_X(\exp(t\beta)x) \parallel^2 \mathrm{dt}.
$$
Thus,
\begin{equation}\label{energy}
\la(x,\beta) = \la(x,\beta, 0) + E(c_x^\beta),
\end{equation}
\begin{lemma}\label{function}
 Let $x\in X$ and let $\beta\in \mathfrak{p}$. Then the function $(0,\infty) \to \mathbb{R}:$ $t\mapsto t^{-1}\Phi(x,\exp(t\beta))$ is nondecreasing and
$$
\la(x, \beta) = \lim_{t\to \infty}\frac{\Phi (x,\exp(t\beta))}{t}.
$$
\end{lemma}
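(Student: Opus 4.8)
The plan is to reduce the statement to elementary convexity of a one-variable function. Set $\psi(t):=\Phi_x(\exp(t\beta))=\Phi(x,\exp(t\beta))$ for $t\ge 0$. First I would record the normalization and the convexity. Taking $g=h=e$ in the cocycle condition (d) gives $\Phi(x,e)=\Phi(x,e)+\Phi(x,e)$, so $\psi(0)=0$. By the identity preceding the lemma, $\psi'(t)=\la(x,\beta,t)=\langle\mup(\exp(t\beta)x),\beta\rangle$, and by the formula recalled from the proof of Lemma \ref{increasing}, $\psi''(t)=\|\beta_X(\exp(t\beta)x)\|^2\ge 0$. Hence $\psi\in C^\infty([0,\infty))$ is convex, $\psi'$ is nondecreasing, and $\psi(0)=0$.

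Next I would prove the monotonicity of $t\mapsto t^{-1}\Phi_x(\exp(t\beta))=\psi(t)/t$. For $0<s<t$, convexity applied to $s=\tfrac{s}{t}\,t+\bigl(1-\tfrac{s}{t}\bigr)\,0$ yields $\psi(s)\le\tfrac{s}{t}\psi(t)+\bigl(1-\tfrac{s}{t}\bigr)\psi(0)=\tfrac{s}{t}\psi(t)$, i.e. $\psi(s)/s\le\psi(t)/t$. The value at $t=0$ is taken to be $\lim_{t\to 0^+}\psi(t)/t=\psi'(0)$ (using $\psi(0)=0$), and for $t>0$ one has $\psi(t)/t=\tfrac1t\int_0^t\psi'(s)\,\mathrm{d}s\ge\psi'(0)$ since $\psi'$ is nondecreasing; so the function is nondecreasing on all of $[0,\infty)$. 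In particular $\ell:=\lim_{t\to\infty}\psi(t)/t$ exists in $\mathbb{R}\cup\{\infty\}$.

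Finally I would identify $\ell$ with $\la(x,\beta)=\lim_{t\to\infty}\psi'(t)$ (this limit exists in $\mathbb{R}\cup\{\infty\}$ because $\psi'$ is nondecreasing) by a squeeze. For the upper bound, $\psi(t)=\int_0^t\psi'(s)\,\mathrm{d}s\le t\,\psi'(t)$, so $\psi(t)/t\le\psi'(t)$ and $\ell\le\la(x,\beta)$. For the lower bound, fix $a>0$; for $t>a$ we have $\psi(t)-\psi(a)=\int_a^t\psi'(s)\,\mathrm{d}s\ge(t-a)\,\psi'(a)$, hence $\psi(t)/t\ge\psi(a)/t+\tfrac{t-a}{t}\psi'(a)$, and letting $t\to\infty$ gives $\ell\ge\psi'(a)$; taking the supremum over $a>0$ gives $\ell\ge\la(x,\beta)$. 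Therefore $\ell=\la(x,\beta)$, uniformly in the finite and the $+\infty$ cases.

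There is no genuine obstacle here: the whole argument is the classical fact that the secant slopes from the origin of a convex function vanishing at $0$ are monotone and converge to the limit of its derivative. The only two points requiring a line of care are the normalization $\psi(0)=0$ — which is precisely where the cocycle condition enters — and treating the case $\la(x,\beta)=+\infty$ on the same footing as the finite case, which the supremum argument above handles automatically.
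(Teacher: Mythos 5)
Your proof is correct and follows essentially the same route as the paper: both write $\Phi_x(\exp(t\beta))=\int_0^t\la(x,\beta,s)\,\mathrm{d}s$, get monotonicity of the secant slopes from convexity together with $\Phi_x(e)=0$, and identify the limit as the Ces\`aro mean of the nondecreasing function $\la(x,\beta,\cdot)$. The only difference is that you spell out the Ces\`aro/squeeze step and the normalization $\Phi(x,e)=0$ explicitly, which the paper leaves implicit.
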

\begin{proof}
Since
$$
\frac{d}{dt}\Phi(x,\exp(t\beta)) = \la(x,\beta,t),
$$
we have
$$
\Phi(x,\exp(t\beta)) = \int_0^t\la(x,\beta,s)ds.
$$ Since $\la(x,\beta,s)$ is nondecreasing and by definition of maximal weight, we have
$$
\la(x,\beta) = \lim_{t\to\infty}\frac{1}{t}\int_0^t\la(x,\beta,s)ds = \lim_{t\to\infty}\frac{\Phi(x,\exp(t\beta))}{t}.
$$
That the function $t\mapsto t^{-1}\Phi(x,\exp(t\beta))$ is nondecreasing follows from the fact that the function $t\mapsto \Phi(x,\exp(t\beta))$ is convex.
\end{proof}
The following Lemma will be needed \cite{Teleman}. We include the proof for completeness.
\begin{lemma}\label{kempfNess properness}
Let $V$ be a subspace of $\liep.$ The following are equivalent for a point $x\in X:$
\begin{enumerate}
    \item The map $\Phi(x, \cdot)$ is linearly proper on $V$, i.e. there exist positive constants $C_1$ and $C_2$ such that:
    $$
    \parallel v \parallel\leq C_1\Phi(x,\exp(v)) + C_2, \quad \forall v \in V.
    $$
    \item $\lambda(x,\beta) > 0$ $\forall \beta \in V\backslash \{0\}.$
\end{enumerate}
\end{lemma}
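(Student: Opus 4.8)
The equivalence is a compactness/exhaustion argument on the closed unit sphere $S$ of $V$, exploiting the convexity properties of the Kempf-Ness function recorded in item (c) of Section~\ref{Kempf-Ness Function} and the homogeneity of the maximal weight. The plan is to prove $(a)\Rightarrow(b)$ directly and $(b)\Rightarrow(a)$ by contradiction using a limiting argument.

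\emph{Direction $(a)\Rightarrow(b)$.} Fix $\beta\in V\setminus\{0\}$. By Lemma~\ref{function}, $\la(x,\beta)=\lim_{t\to\infty} t^{-1}\Phi_x(\exp(t\beta))=\lim_{t\to\infty}t^{-1}\Phi(x,\exp(t\beta))$, and the limit is a supremum of a nondecreasing function. Using the linear properness hypothesis with $v=t\beta$ gives $t\Vert\beta\Vert\le C_1\Phi(x,\exp(t\beta))+C_2$, hence $t^{-1}\Phi(x,\exp(t\beta))\ge C_1^{-1}(\Vert\beta\Vert-C_2/t)$. Letting $t\to\infty$ yields $\la(x,\beta)\ge C_1^{-1}\Vert\beta\Vert>0$. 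This direction is essentially immediate.

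\emph{Direction $(b)\Rightarrow(a)$.} Suppose $\la(x,\beta)>0$ for all $\beta\in V\setminus\{0\}$ but $\Phi(x,\cdot)$ is not linearly proper on $V$. First I would reduce to the unit sphere: for $\beta\neq 0$ write $\beta=\Vert\beta\Vert\,\beta'$ with $\Vert\beta'\Vert=1$; since $t\mapsto t^{-1}\Phi_x(\exp(t\beta'))$ is nondecreasing (Lemma~\ref{function}) and, by the cocycle relation plus item (c), $\Phi_x(\exp(v))$ is convex in $v$ with $\Phi_x(e)=0$, one has $\Phi(x,\exp(\beta))=\Phi_x(\exp(\Vert\beta\Vert\beta'))\ge \Vert\beta\Vert\cdot\big(\Phi_x(\exp(\beta'))\big)$ when $\Vert\beta\Vert\ge 1$; more precisely convexity gives $\Phi_x(\exp(s v))/s$ nondecreasing, so failure of linear properness forces a sequence $\beta_n\in V$, $\Vert\beta_n\Vert\to\infty$, with $\Phi(x,\exp(\beta_n))/\Vert\beta_n\Vert\to 0$. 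Setting $\beta_n'=\beta_n/\Vert\beta_n\Vert$ on the unit sphere of $V$ (finite-dimensional, so the sphere is compact), pass to a subsequence with $\beta_n'\to\beta_\infty$, $\Vert\beta_\infty\Vert=1$. For any fixed $t>0$, convexity and $\Phi_x(e)=0$ give $\Phi_x(\exp(t\beta_n'))\le \frac{t}{\Vert\beta_n\Vert}\Phi_x(\exp(\beta_n))=\frac{t}{\Vert\beta_n\Vert}\Phi(x,\exp(\beta_n))$ once $\Vert\beta_n\Vert\ge t$, which tends to $0$. By continuity of $\Phi$ in the group variable, $\Phi_x(\exp(t\beta_\infty))=\lim_n\Phi_x(\exp(t\beta_n'))\le 0$. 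Since $t\mapsto t^{-1}\Phi_x(\exp(t\beta_\infty))$ is nondecreasing, this forces $\la(x,\beta_\infty)=\lim_{t\to\infty}t^{-1}\Phi_x(\exp(t\beta_\infty))\le 0$, contradicting (b) since $\beta_\infty\neq 0$.

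\emph{Main obstacle.} The delicate point is the uniformity in the limiting step: one needs the inequality $\Phi_x(\exp(t\beta_n'))\le \frac{t}{\Vert\beta_n\Vert}\Phi(x,\exp(\beta_n))$ to hold and to interchange the limit $n\to\infty$ with the fixed $t$, which requires that $\Phi(x,\cdot)$ be genuinely convex along the geodesic rays $s\mapsto\exp(sv)$ through $e$ with $\Phi_x(e)=0$ — exactly property (c) of the Kempf-Ness function together with the descent to $M=G/K$. One must also be careful that ``not linearly proper'' is correctly negated: it yields, for every pair $(C_1,C_2)$, some $v$ violating the bound, and a standard argument (taking $C_1=C_2=n$) produces the sequence $\beta_n$ with $\Vert\beta_n\Vert/(\Phi(x,\exp(\beta_n))+1)\to\infty$, whence $\Vert\beta_n\Vert\to\infty$ and $\Phi(x,\exp(\beta_n))/\Vert\beta_n\Vert\to 0$. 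Everything else is routine given the compactness of the unit sphere in the finite-dimensional space $V$ and the continuity and convexity of the Kempf-Ness function.
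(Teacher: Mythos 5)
Your argument is correct and follows essentially the same route as the paper's: the forward direction by plugging $v=t\beta$ into the properness bound and letting $t\to\infty$, and the converse by contradiction via a normalizing sequence $\beta_n'=\beta_n/\|\beta_n\|$ on the compact unit sphere of $V$ together with convexity of $t\mapsto\Phi_x(\exp(t\beta))$ (you use the secant-slope form where the paper uses the supporting-line form, which is an inessential difference). One small repair: with $C_1=C_2=n$ the deduction that $\|\beta_n\|\to\infty$ can fail if $\Phi(x,\exp(\cdot))$ takes values below $-1$ on a bounded set, so take $C_2=n^2$ (as the paper does), whence boundedness of $(\beta_n)$ would force $n\Phi(x,\exp(\beta_n))+n^2\to+\infty$ and contradict the violating inequality; the rest of your limiting step, which only needs $\limsup_n \Phi(x,\exp(\beta_n))/\|\beta_n\|\le 0$ rather than convergence to $0$, then goes through.
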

\begin{proof}
(a) implies (b): By the inequality in $(a)$, for any $\beta \in V$, $t\in \mathbb{R},$ $t\parallel\beta\parallel \leq C_1\Phi(x,\exp(t\beta)) + C_2$. This shows that
$$
\frac{d}{dt}\Phi(x,\exp(t\beta))=  \lambda(x, \beta, t)>0,
$$
for $\beta \neq 0$ and sufficiently large $t$. Since the function $s\mapsto \lambda(x,\beta,s)$ is nondecreasing, keeping in mind the definition of $\lambda(x,\beta)$, (b) holds.

Suppose $\lambda(x, \beta) > 0, \forall \beta\in V\backslash \{0\}.$  Suppose by contradiction that there are no positive constants $(C_1, C_2)$ for which the inequality in (a) holds. Then that will implies that there is a sequence $(\beta_n)_n$ in $V$ such that
\begin{equation}\label{Inequality}
n\Phi(x,\exp(\beta_n)) + n^2 < \parallel \beta_n \parallel.
\end{equation}

Observe that $\lim_{n\to \infty}\parallel \beta_n \parallel = \infty,$ otherwise, $(\beta_n)_n$ would have a bounded subsequence $(\beta_{n_m})_m$. It will then follow that $(\Phi(x,\exp(\beta_{n_m}))_m$ would also be bounded which will contradict (\ref{Inequality}).

From (\ref{Inequality}), we have
$$
\frac{\Phi(x,\exp(\beta_n))}{\parallel \beta_n \parallel} + \frac{n^2}{\parallel \beta_n \parallel}< \frac{1}{n}.
$$ In particular,
$$
\frac{\Phi(x,\exp(\beta_n))}{\parallel \beta_n \parallel} < \frac{1}{n}.
$$
Set $a_n = \parallel \beta_n \parallel,$ $b_n = \frac{\beta_n}{\parallel \beta_n \parallel},$ and choose $t_0\in \mathbb{R}.$ By the convexity property of $\Phi$,
\begin{align*}
&\Phi(x,\exp(a_nb_n)) \geq \Phi(x,\exp(t_0b_n)) + (a_n - t_0)\frac{d}{dt}\bigg \vert _{t=0} \Phi(x,\exp(tb_n))\\
\implies &\Phi(x,\exp(\beta_n)) \geq \Phi(x,\exp(t_0b_n)) + (a_n - t_0)\lambda(x, b_n, t_0) \quad \forall a_n \geq t_0\\
\implies & \frac{\Phi(x,\exp(t_0b_n)) + (a_n - t_0)\lambda(x, b_n, t_0)}{\parallel \beta_n \parallel} \leq \frac{\Phi(x,\exp(\beta_n))}{\parallel \beta_n \parallel} < \frac{1}{n}.
\end{align*}
We get,
\begin{equation}\label{equation}
    \frac{\Phi(x,\exp(t_0b_n))}{\parallel \beta_n \parallel} + \left(1 - \frac{t_0}{\parallel \beta_n \parallel}\right)\lambda(x, b_n, t_0) < \frac{1}{n}
\end{equation}
The sequence $(b_n)_n$ has a subsequence which converges to, say $b_0\in V$ with $\parallel b_0\parallel = 1.$ Taking the limit of (\ref{equation}), we have $\lambda(x, b_0, t_0)\leq 0.$ But this implies that $\lambda(x,b_0) \leq 0,$ which contradicts the assumption.
\end{proof}
We conclude this section proving a numerical criterium for the stability condition. We start with the following Lemma.
\begin{lemma}\label{stable-polystable}
Let $x\in X$ be such that $\mup(x)=0$.  Then $\lambda(x,\beta) \geq 0$ for any $\beta \in \liep$ and $\lambda(x,\beta)=0$ if and only if $\beta_X(x)=0$. In particular, $x$ is stable if and only if $\lambda(x,\beta)>0$ for any $\beta \in \liep \backslash\{0\}$.
\end{lemma}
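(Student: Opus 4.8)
The plan is to deduce everything from the energy identity \eqref{energy}, which reads $\lambda(x,\beta)=\lambda(x,\beta,0)+E(c_x^\beta)$, together with the hypothesis $\mup(x)=0$. First I would note that $\lambda(x,\beta,0)=\langle\mup(x),\beta\rangle=0$, so that
\[
\lambda(x,\beta)=E(c_x^\beta)=\int_0^{+\infty}\|\beta_X(\exp(t\beta)x)\|^2\,\mathrm{d}t\geq 0 ,
\]
which already gives the first claim. For the equality case I would use that the integrand is continuous and nonnegative: $\lambda(x,\beta)=0$ forces $\beta_X(\exp(t\beta)x)=0$ for every $t\geq 0$, and evaluating at $t=0$ yields $\beta_X(x)=0$. (Equivalently, by monotonicity: since $t\mapsto\lambda(x,\beta,t)$ is nondecreasing and $\lambda(x,\beta,0)=0=\lambda(x,\beta)$, this function is identically $0$, hence so is its derivative $\|\beta_X(\exp(t\beta)x)\|^2$.) Conversely, if $\beta_X(x)=0$ then $\exp(t\beta)x=x$ for all $t$, so the integrand vanishes identically and $\lambda(x,\beta)=0$. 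This settles the two displayed assertions.

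For the final ``in particular'' I would observe that, by what has just been proved, $\lambda(x,\beta)>0$ for every $\beta\in\liep\setminus\{0\}$ is precisely the condition $\liep_x=\{0\}$, where $\liep_x=\{\xi\in\liep:\xi_X(x)=0\}$; so the task reduces to showing that, under $\mup(x)=0$, stability of $x$ is equivalent to $\liep_x=\{0\}$. If $\liep_x=\{0\}$, then Proposition \ref{stabilizer of a stable point} gives $G_x=K_x\exp(\liep_x)=K_x$, whence $\lieg_x=\liek_x$ is a subalgebra of $\liek$; together with $x\in\mup^{-1}(0)$ this says exactly that $x$ is stable. Conversely, if $x$ is stable then $G_x$ is compact (as recorded just before the statement); since $G_x=K_x\exp(\liep_x)$ with $K_x$ compact and $\exp$ restricting to a homeomorphism of $\liep_x$ onto a closed submanifold of $G$, compactness of $G_x$ forces $\liep_x=\{0\}$.

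The only step that is not pure bookkeeping is the implication ``$x$ stable $\Rightarrow G_x$ compact'', i.e.\ that a compatible subgroup whose Lie algebra is conjugate into $\liek$ must in fact be compact; this is exactly where one uses that for $\eta\in\liep$ the operator $\cosh(\ad\eta)$ on $\liu^\C$ is positive definite, so no nonzero element of $\liep$ can be carried into $\liu$ by an element of $G$. Since that implication is already available in the paper, the lemma follows formally from the energy identity and Proposition \ref{stabilizer of a stable point}.
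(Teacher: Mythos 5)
Your argument is correct and is essentially the paper's own proof: the paper phrases the first part via convexity of $f(t)=\Phi(x,\exp(t\beta))$ and monotonicity of $f'(t)=\la(x,\beta,t)$ with $f'(0)=\langle\mup(x),\beta\rangle=0$, which is exactly the energy identity \eqref{energy} you invoke, and the ``in particular'' is likewise reduced to $G_x=K_x\exp(\liep_x)$ being contained in $K$ iff $\liep_x=\{0\}$, using Proposition \ref{stabilizer of a stable point} (miscited in the paper as Lemma \ref{decomposition}). Your write-up just makes the last equivalence more explicit than the paper does.
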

\begin{proof}
 Let $\beta \in \liep$  and let
\[
\rho(t):\R \lra \R, \qquad t\mapsto \Phi(x,\exp(t\beta)).
\]
$\rho$ is a convex function and
\[
\lambda(x,\beta) \geq \rho'(t)=\langle \mup(\exp(t\beta) x), \beta \rangle.
\]
Since $\rho'(0)=\langle \mup(x),\beta \rangle =0$, keeping in mind that $\rho'(t)$ is nondecreasing, it follows that $\lambda(x,\beta)\geq 0$ and $\lambda(x,\beta)=0$ if and only if $\rho'(t)=0$ and hence if and only if $\beta_X (x)=0$.
By Proposition \ref{stabilizer of a stable point}, $G_x$ is compatible. Hence $x$ is stable if and only if $G_x\subset K$ and so if and only if $\lambda(x,\beta) >0$ for any $\beta \in \liep \backslash\{0\}$.
\end{proof}
\begin{theorem}\label{stable}
Let $x\in X$. Then $x$ is stable if and only if $\lambda(x,\beta) >0$ for any $\beta \in \liep\backslash\{0\}$.
\end{theorem}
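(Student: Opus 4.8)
The plan is to reduce Theorem~\ref{stable} to Lemma~\ref{stable-polystable}, which already handles the case $\mup(x)=0$, by establishing that the numerical condition $(\ast)$, namely that $\lambda(x,\beta)>0$ for all $\beta\in\liep\setminus\{0\}$, is invariant along $G$-orbits. Combined with the $G$-invariance of stability noted after the Definition, this makes both implications immediate once the orbit of $x$ has been moved into $\mup^{-1}(0)$.

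To prove the $G$-invariance of $(\ast)$, I would work on $M=G/K$. By Lemma~\ref{kempfNess properness} applied with $V=\liep$, condition $(\ast)$ for $x$ is equivalent to the linear properness of $\Phi(x,\cdot)$ on $\liep$. Passing to $M$ via $\Phi_x(\pi(g))=\Phi(x,g^{-1})$ and using that $v\mapsto\pi(\exp(-v))$ is a diffeomorphism of $\liep$ onto $M$ with $d_M(\pi(e),\pi(\exp(-v)))=\Vert v\Vert$, linear properness on $\liep$ becomes the estimate $\Phi_x(p)\ge c_1\, d_M(\pi(e),p)-c_2$ for all $p\in M$, for suitable positive constants $c_1,c_2$. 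The cocycle condition then gives $\Phi_{gx}=\Phi_x\circ\tau_{g^{-1}}-\Phi(x,g)$, where $\tau_{g^{-1}}$ denotes the action of $g^{-1}$ on $M$; since $M$ is a Riemannian symmetric space of noncompact type and $G$ acts on it by isometries, $\tau_{g^{-1}}$ is an isometry, so by the triangle inequality the displayed estimate holds for $\Phi_x$ iff it holds for $\Phi_{gx}$ (with adjusted constants). Invoking Lemma~\ref{kempfNess properness} once more, $(\ast)$ holds for $x$ iff it holds for $gx$, for every $g\in G$.

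Given this, the two implications are short. If $x$ is stable, choose $g\in G$ with $\mup(gx)=0$; then $gx$ is stable, so by Lemma~\ref{stable-polystable} we have $\lambda(gx,\beta)>0$ for all $\beta\in\liep\setminus\{0\}$, whence $(\ast)$ holds for $x$ by the $G$-invariance. Conversely, suppose $(\ast)$ holds for $x$. Then $\Phi(x,\cdot)$ is linearly proper on $\liep$, so, by the translation above, $\Phi_x\colon M\to\R$ is proper; being smooth with compact sublevel sets on the complete manifold $M$, it attains a global minimum at some $\pi(g_0)$, which is therefore a critical point of $\Phi_x$. By Proposition~\ref{polystable-I}, $\mup(g_0^{-1}x)=0$, so $y:=g_0^{-1}x\in G\cdot x$ lies in $\mup^{-1}(0)$; by the $G$-invariance of $(\ast)$, $\lambda(y,\beta)>0$ for all $\beta\in\liep\setminus\{0\}$, and Lemma~\ref{stable-polystable} then shows $y$, hence $x$, is stable.

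The main obstacle is the $G$-invariance of $(\ast)$: one has to pass to the symmetric space $M=G/K$, match linear properness of the Kempf--Ness function on $\liep$ with the metric estimate $\Phi_x\gtrsim d_M(\pi(e),\cdot)$, and exploit that $G$ acts on $M$ by isometries so that the change of base point produced by the cocycle identity preserves this estimate up to additive and multiplicative constants. The remaining points — that $\Phi_x$ is smooth with compact sublevel sets once it is linearly proper, and the identification of its critical points through Proposition~\ref{polystable-I} — are routine given the material of Section~\ref{Kempf-Ness Function}; note in particular that no compactness of $X$ is needed here, since linear properness forces the minimum to be attained without appealing to the finer statements of Theorem~\ref{ConvexKempf}.
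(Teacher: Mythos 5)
Your proof is correct and follows essentially the same route as the paper's: both reduce to Lemma \ref{stable-polystable} via Lemma \ref{kempfNess properness} (equivalence of the numerical condition with linear properness of the Kempf--Ness function), both transport linear properness along the $G$-orbit using the cocycle condition, and both produce a zero of $\mup$ from a minimum of the proper function $\Phi_x$ on $G/K$ via Proposition \ref{polystable-I}. The only point of divergence is the justification of the transport step: writing $\exp(\xi)g=k(\xi)\exp(\theta(\xi))$, the paper cites Mundet i Riera's estimate $\Vert\xi\Vert^2\le A_1\Vert\theta(\xi)\Vert^2+A_2$, whereas you derive the equivalent (in fact linear) bound $\Vert\xi\Vert\le\Vert\theta(\xi)\Vert+d_M\bigl(\pi(e),g^{-1}\pi(e)\bigr)$ from the triangle inequality in the symmetric space $M=G/K$, which is a slightly more self-contained way to reach the same conclusion.
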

\begin{proof}
Assume $\lambda(x,\beta) >0$ for any $\beta \in \liep\backslash\{0\}$. By Lemma \ref{kempfNess properness}, keeping in mind that $\exp:\liep \lra G/K$ is a diffeomorphism, it follows that $\Phi_x$ is an exhaustion. Hence $\Phi_x$ has a minimum and so a critical point. By Proposition \ref{polystable-I}, $x$ is polystable. Hence there exists $g\in G$ such that $\mup(gx)=0$. We claim that $\Phi(gx,\cdot)$ is linearly properly on $\liep$. Indeed, by cocycle condition, we get
\[
\Phi(gx,\exp(\xi))=\Phi(x,\exp(\xi)g)-\Phi(x,g).
\]
Write $\exp(\xi)g=k(\xi)\exp(\theta (\xi))$. Then $\Phi(gx,\exp(\xi))=\Phi(x,\exp (\theta(\xi))-\Phi(x,g)$. In \cite{MUNDETC}, the author proves that there exist $A_1$ and $A_2$ such that $\parallel \xi \parallel^2 \leq A_1 \parallel \theta(\xi) \parallel^2 +A_2$. Since $\Phi(x,\cdot)$ is linearly proper on $\liep$, it follows that $\Phi(gx,\cdot)$ is linearly proper on $\liep$ as well. By Lemma \ref{kempfNess properness} and Lemma \ref{stable-polystable}, it follows that $gx$ is stable and so $x$ is stable as well.

Assume that $x$ is stable. Then there exists $g\in G$ such that $\mup(gx)=0$. By Lemma \ref{stable-polystable}, $\lambda(gx,\beta) >0$ for any $\beta \in \liep\backslash\{0\}$. By Lemma \ref{kempfNess properness}, $\Phi(gx,\cdot)$ is linearly proper on $\liep$. As in the previous part of the proof, one has $\Phi(x,\cdot)$ is linearly proper on $\liep$ and so $\lambda(x,\beta)>0$ for any $\beta \in \liep\backslash\{0\}$.
\end{proof}
\subsection{Energy Complete}
Let $\mup:X \lra \liep$ denote the $G$-gradient map associated with the momentum map $\mu:Z \lra \liu$.
\begin{definition}
The $G$-action on $X$ is called energy complete if
 for any $x\in X$ and for any $\beta \in \mathfrak{p}$, If $E(c_x^\beta ) < \infty$ then $\lim_{t\to \infty}\exp(t\beta)x$ exists.
\end{definition}
By Corollary \ref{slice-cor-2}, if there exists a sequence $\{t_n\}_{n\in \mathbb N}$ such that $\lim_{n\mapsto +\infty} t_n =\infty$ and $\lim_{n\mapsto \infty}\exp(t_n \beta)x$ converges then the curve $c_x(t)=\exp(t\beta)x$ has a limit as $t\mapsto \infty$. The proof of the following result is similar to \cite[Proposition 3.9]{Teleman}.
\begin{proposition}$\,$
\begin{enumerate}
\item If $X$ is compact, then the $G$-action is energy complete;
\item if $G$ acts on a complex vector space $(V,h)$, where $h$ is a $K$-invariant Hermitian scalar product, then the $G$-action is energy complete;
\item if $G\subset \mathrm{SL}(n,\R)$ is closed and compatible, then the $G$-action on $\R^n$ is energy complete
\end{enumerate}
\end{proposition}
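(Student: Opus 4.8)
The three cases share a common strategy: in each, we must show that finite energy $E(c_x^\beta)<\infty$ forces $c_x^\beta(t)=\exp(t\beta)x$ to converge as $t\to+\infty$. By the remark following the definition (which uses Corollary \ref{slice-cor-2}), it suffices to produce a sequence $t_n\to+\infty$ along which $c_x^\beta(t_n)$ converges. So the real content is: finite energy $\Rightarrow$ the curve is relatively compact (has a convergent subsequence).

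For case (a), compactness of $X$ makes this automatic: every sequence $c_x^\beta(t_n)$ has a convergent subsequence since $X$ is compact, and then the Linearization Theorem upgrades this to a genuine limit. There is nothing more to do. For case (c), I would deduce it from case (b): if $G\subset \mathrm{SL}(n,\R)$ is closed and compatible, embed $\R^n$ into $\C^n=\R^n\otimes\C$ equipped with the standard Hermitian scalar product $h$; since $K\subset G\cap U(n)$ (after choosing $U=U(n)\cap\text{(appropriate form)}$ or, more precisely, using that a compatible $G\subset\mathrm{SL}(n,\C)$ meets $U$ in $K$), the scalar product is $K$-invariant, $\R^n$ is a $G$-invariant totally real submanifold, the gradient map and the fundamental vector fields restrict, and the energy of $c_x^\beta$ computed in $\R^n$ equals that computed in $\C^n$. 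Hence energy completeness for the linear action on $(\C^n,h)$ restricts to give it on $\R^n$. The only subtlety is checking that the Riemannian metric on $\R^n$ induced from the \Keler metric on $\C^n$ is the one for which $\mathrm{grad}\,\mu_\liep^\beta=\beta_X$ — but this is the standard relationship between a gradient map on $Z$ and its restriction to a totally real $G$-invariant submanifold, already implicit in the setup of Section \ref{subsection-gradient-moment}.

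The heart of the matter is case (b), the linear action on $(V,h)$. Here $Z=V$, $U$ is a maximal compact subgroup of $\mathrm{GL}(V)$ preserving $h$, and the momentum map is the standard quadratic one, $\mu^\xi(v)=\tfrac{1}{2}\langle \xi v,v\rangle_h$ up to normalization, so that for $\beta\in\liep$ (a self-adjoint operator on $V$) one has $\mu_\liep^\beta(v)=\tfrac12\langle\beta v,v\rangle_h=\tfrac12\|\,\cdot\,\|$-type expression. Diagonalize $\beta$ with real eigenvalues $\lambda_1<\cdots<\lambda_r$ and eigenspace decomposition $v=\sum v_j$; then $\exp(t\beta)v=\sum e^{t\lambda_j}v_j$ and a direct computation gives $\|\beta_X(\exp(t\beta)v)\|^2 = \sum_j \lambda_j^2 e^{2t\lambda_j}\|v_j\|^2$ (up to the constant from the metric). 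Finiteness of $\int_0^\infty\|\beta_X(\exp(t\beta)v)\|^2\,dt$ then forces $v_j=0$ for every $j$ with $\lambda_j>0$ (the positive eigenvalues give exponentially divergent integrands) — indeed the component $\lambda_j=0$ contributes nothing, and negative eigenvalues give integrable, decaying contributions. Therefore $\exp(t\beta)v=\sum_{\lambda_j\le 0}e^{t\lambda_j}v_j$ converges to $\sum_{\lambda_j=0}v_j$ as $t\to+\infty$. This is an explicit computation rather than an abstract argument, and it is the model that Teleman's Proposition 3.9 follows; the parenthetical reference to \cite[Proposition 3.9]{Teleman} indicates the authors intend exactly this.

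I expect the main obstacle to be purely bookkeeping: in cases (b) and (c) one must pin down the precise identification of $\liep$ with self-adjoint operators, the constant relating $\|\beta_X\|^2$ to the eigenvalue sum, and — in (c) — the compatibility of the restricted structures, none of which is deep but all of which must be stated carefully enough that the eigenvalue computation is unambiguous. Once the energy integrand is written in diagonalized form, the conclusion in each case is immediate, and the Linearization Theorem (Corollary \ref{slice-cor-2}) supplies the final step converting "convergent subsequence" to "limit exists."
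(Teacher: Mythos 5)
Your proposal is correct and follows exactly the argument the paper intends: the paper gives no proof of its own, deferring to Teleman's Proposition 3.9, which is precisely your combination of the compactness/convergent-subsequence argument plus the linearization remark for (a) and the eigenvalue computation $\|\beta_X(\exp(t\beta)v)\|^2=\sum_j\lambda_j^2e^{2t\lambda_j}\|v_j\|^2$ for the linear cases. The only stylistic difference is that in (c) one can diagonalize the real symmetric $\beta$ directly over $\R$ and repeat the computation of (b) verbatim, rather than passing through the complexification $\R^n\hookrightarrow\C^n$; both routes are valid.
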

\begin{proposition}\label{energy-complete}
Let $x\in X$ and let $\beta \in \liep$. Then $\lambda(x,\beta) <+\infty$, if and only if $E(c_x^\beta ) < \infty$. Moreover, if $E(c_x^\beta ) < \infty$, denoting by $y=\lim_{t\mapsto+\infty} \exp(t\beta)x$, then $y\in X^\beta$ and
$
\lambda(x,\beta)=\langle \mup(y), \beta \rangle.
$
\end{proposition}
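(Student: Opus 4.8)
The plan is to obtain both assertions as consequences of the energy identity (\ref{energy}), i.e. $\lambda(x,\beta)=\lambda(x,\beta,0)+E(c_x^\beta)$, combined with the monotonicity of $t\mapsto\lambda(x,\beta,t)$, the continuity of $\mup$, and the energy completeness of the $G$-action on $X$. For the first assertion I would note that, being the limit of a nondecreasing function of $t$, the quantity $\lambda(x,\beta)=\lim_{t\to+\infty}\lambda(x,\beta,t)$ lies in $\R\cup\{+\infty\}$, so the hypothesis $\lambda(x,\beta)\le 0$ forces it to be a finite real number. Since $\lambda(x,\beta,0)=\langle\mup(x),\beta\rangle\in\R$, rearranging (\ref{energy}) gives $E(c_x^\beta)=\lambda(x,\beta)-\lambda(x,\beta,0)$, which is finite (and nonnegative, as it is the integral of $\|\beta_X(\exp(s\beta)x)\|^2$). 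This uses nothing beyond (\ref{energy}).

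For the second assertion, assume $E(c_x^\beta)<\infty$. By energy completeness the limit $y=\lim_{t\to+\infty}\exp(t\beta)x$ exists in $X$. I would first check that $y$ is fixed by the one-parameter group $\exp(\R\beta)$: for each $s\in\R$, continuity of the diffeomorphism $\exp(s\beta)$ of $X$ gives $\exp(s\beta)y=\lim_{t\to+\infty}\exp(s\beta)\exp(t\beta)x=\lim_{t\to+\infty}\exp((s+t)\beta)x=y$, and differentiating at $s=0$ yields $\beta_X(y)=0$, i.e. $y\in X^\beta$. (Alternatively, finiteness of $E(c_x^\beta)$ produces a sequence $s_n\to+\infty$ with $\|\beta_X(\exp(s_n\beta)x)\|\to 0$, and since $\exp(s_n\beta)x\to y$ and $\beta_X$ is continuous one again gets $\beta_X(y)=0$.) Finally, letting $t\to+\infty$ in $\lambda(x,\beta,t)=\langle\mup(\exp(t\beta)x),\beta\rangle$ and using the continuity of $\mup$ together with $\exp(t\beta)x\to y$ gives $\lambda(x,\beta)=\langle\mup(y),\beta\rangle$, as desired.

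The argument is essentially a chain of passages to the limit, so I do not expect a serious obstacle. The one point that is not completely formal is the existence of the limit $y$, and this is precisely what the energy completeness hypothesis is designed to supply: without it, finiteness of the energy only gives a sequence along which $\beta_X(c_x^\beta(t_n))\to 0$, and one would still need a convergent subsequence of $c_x^\beta(t_n)$ before invoking the Linearization Theorem (Corollary \ref{slice-cor-2}) to conclude that the full limit exists.
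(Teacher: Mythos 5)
Your proposal is correct and follows essentially the same route as the paper: the first assertion is read off from the energy identity (\ref{energy}) since $\langle\mup(x),\beta\rangle$ is finite, and the second is a passage to the limit using energy completeness and the continuity of $\mup$. The only (harmless) difference is that you establish $y\in X^\beta$ by the elementary flow-continuity argument $\exp(s\beta)y=\lim_{t\to+\infty}\exp((s+t)\beta)x=y$, whereas the paper invokes the linearization result of Corollary \ref{slice-cor-2}; both are valid.
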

\begin{proof}
Since
\[
\lambda(x,\beta)=\langle \mup(z),\beta \rangle +E(c_x^\beta),
\]
it follows that $\lambda(x,\beta) < +\infty$ if and only if $E(c_x^\beta)<+\infty$. Assume that $E(c_x^\beta)<+\infty$. Let $y=\lim_{t\mapsto+\infty} \exp(t\beta)x$. Since $\beta_X$ is the gradient of $\mup^\beta$, it follows that $y\in X^\beta$ and
\[
\lambda(x,\beta)=\lim_{t\mapsto +\infty} \langle \mup(\exp(t\beta) x),\beta \rangle =\langle \mup(y),\beta\rangle.
\]
\end{proof}
We claim that the maximal weight satisfies a $G$-equivariant property. We start with the following Lemmata.
\begin{lemma}\label{inv1}
Let $c_x^\beta$ and let $g\in G^{\beta-}$. If $E(c_x^\beta)<+\infty$, then $E(c_{gx}^\beta)<+\infty$.
\end{lemma}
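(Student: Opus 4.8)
The plan is to pass through the Kempf--Ness function and reduce everything to the finiteness of the maximal weight. By (\ref{energy}) we have $E(c_x^\beta)=\lambda(x,\beta)-\lambda(x,\beta,0)$, so $E(c_x^\beta)<\infty$ if and only if $\lambda(x,\beta)<\infty$, and similarly $E(c_{gx}^\beta)<\infty$ iff $\lambda(gx,\beta)<\infty$. By Lemma \ref{function}, $\lambda(y,\beta)=\lim_{t\to\infty}t^{-1}\Phi(y,\exp(t\beta))$ for any $y\in X$. Hence it suffices to prove that $\lambda(gx,\beta)<\infty$, knowing that $\lambda(x,\beta)<\infty$.

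Next I would bring $g$ inside the Kempf--Ness function by the cocycle condition. Put $h_t:=\exp(t\beta)\,g\,\exp(-t\beta)$, so that $\exp(t\beta)g=h_t\exp(t\beta)$. Since $g\in G^{\beta-}$ we have $h_t\to\pi^{\beta-}(g)\in G^\beta$ as $t\to+\infty$; in particular $\{h_t:t\ge 0\}$ has compact closure in $G$. Applying the cocycle condition (property (d)) twice and the left $K$-invariance (property (b)),
\begin{align*}
\Phi(gx,\exp(t\beta)) &= \Phi(x,\exp(t\beta)g)-\Phi(x,g)=\Phi\big(x,h_t\exp(t\beta)\big)-\Phi(x,g)\\
&= \Phi(x,\exp(t\beta))+\Phi\big(\exp(t\beta)x,\,h_t\big)-\Phi(x,g).
\end{align*}
Dividing by $t$ and letting $t\to\infty$, the first summand tends to $\lambda(x,\beta)\in\R$ and the last to $0$, so by Lemma \ref{function} (applied to $gx$) one gets $\lambda(gx,\beta)=\lambda(x,\beta)+\lim_{t\to\infty}t^{-1}\Phi(\exp(t\beta)x,h_t)$ in $\R\cup\{\infty\}$; thus $\lambda(gx,\beta)<\infty$ as soon as $\limsup_{t\to\infty} t^{-1}\Phi(\exp(t\beta)x,h_t)<\infty$.

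This last estimate is the heart of the matter, and I expect it to be the main obstacle. To organize it one can use $G^{\beta-}=G^\beta\ltimes R^{\beta-}$ and factor $g=g_0 r$ with $g_0=\pi^{\beta-}(g)\in G^\beta$ and $r\in R^{\beta-}$, so that $gx=g_0(rx)$ and it is enough to treat $g\in G^\beta$ and $g\in R^{\beta-}$ separately. If $g\in K^\beta$ the statement is immediate, since $g$ acts by isometries and commutes with the flow of $\beta_X$, whence $\|\beta_X(\exp(t\beta)gx)\|=\|\beta_X(\exp(t\beta)x)\|$ and $E(c_{gx}^\beta)=E(c_x^\beta)$. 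In the remaining cases $h_t$ is either constant (for $g\in G^\beta$) or tends to $e$ exponentially fast (for $r\in R^{\beta-}$, where $h_t=\exp(e^{t\,\ad\beta}\rho)$ if $r=\exp\rho$, $\rho\in\lier^{\beta-}$). Writing $h_t=k_t\exp(\eta_t)$ with $k_t\in K$ and $\eta_t\in\liep$ bounded, left $K$-invariance turns the correction term into $\Phi(\exp(t\beta)x,h_t)=\int_0^1\langle\mu_\liep(\exp(s\eta_t)\exp(t\beta)x),\eta_t\rangle\,ds$, so the problem becomes to control the growth in $t$ of $\mu_\liep$ along the perturbed flow line $s\mapsto\exp(s\eta_t)\,c_x^\beta(t)$. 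The difficulty is that $c_x^\beta(t)$ need not remain in a compact subset of $X$; to overcome it one must use that finiteness of $E(c_x^\beta)$ already forces $\langle\mu_\liep(c_x^\beta(t)),\beta\rangle$ to stay bounded, combined with the Linearization Theorem (Corollary \ref{slice-cor-2}) near the limit of $c_x^\beta$, so that the contribution of $t^{-1}\Phi(\exp(t\beta)x,h_t)$ is $o(1)$; in the $R^{\beta-}$ case the exponential decay $h_t\to e$ should in fact make this correction term tend to $0$.
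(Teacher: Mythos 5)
There is a genuine gap, and you have in fact flagged it yourself: the whole argument hinges on showing $\limsup_{t\to\infty} t^{-1}\Phi(\exp(t\beta)x,h_t)<\infty$, and you never establish this estimate — you only describe what ``one must use'' and what ``should'' happen. Your sub-case analysis does not close it either: the $K^\beta$ case is fine, but for $g_0\in\exp(\liep^\beta)$ (where $h_t\equiv g_0$) and for $r\in R^{\beta-}$ (where $h_t\to e$) you still need to control $\langle\mu_\liep(\exp(s\eta_t)\exp(t\beta)x),\eta_t\rangle$ along the flow line, and the decay of $\eta_t$ alone does not do this if $\mu_\liep$ is allowed to blow up along $c_x^\beta$. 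Moreover your stated worry that ``$c_x^\beta(t)$ need not remain in a compact subset of $X$'' is exactly where you fail to use the standing hypothesis of this subsection: the action is energy complete, so $E(c_x^\beta)<\infty$ forces $y:=\lim_{t\to+\infty}\exp(t\beta)x$ to exist, and the curve \emph{is} precompact. (The paper's own proof opens with precisely this observation.)

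Once you use that, the Kempf--Ness detour becomes unnecessary and the lemma is two lines, which is the paper's route: writing $\exp(t\beta)gx=\bigl(\exp(t\beta)g\exp(-t\beta)\bigr)\exp(t\beta)x=h_t\cdot\exp(t\beta)x$, the first factor converges to $\pi^{\beta-}(g)\in G^\beta$ by definition of $G^{\beta-}$ and the second to $y$, so $\exp(t\beta)gx\to\pi^{\beta-}(g)\,y$. Hence $\lambda(gx,\beta,t)=\langle\mu_\liep(\exp(t\beta)gx),\beta\rangle$ stays bounded, and since $E(c_{gx}^\beta)=\lim_{t\to\infty}\lambda(gx,\beta,t)-\langle\mu_\liep(gx),\beta\rangle$, the energy is finite. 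If you insist on your Kempf--Ness formulation, the same convergence (plus joint continuity of $\Phi$, which follows from the integral formula you wrote) gives $\Phi(\exp(t\beta)x,h_t)\to\Phi(y,\pi^{\beta-}(g))$, so the correction term is $O(1)$ and $t^{-1}\Phi(\exp(t\beta)x,h_t)\to 0$; this would close your gap and additionally yield $\lambda(gx,\beta)=\lambda(x,\beta)$, i.e.\ Lemma \ref{inv2}. As written, however, the proposal is a plan with its central estimate left open.
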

\begin{proof}
Since $E(c_x^\beta)<+\infty$, it follows that $\lim_{t\mapsto +\infty} \exp(t \beta)x$ exists. The element $g\in G^{\beta-}$ and so the $\lim_{t\mapsto +\infty} \exp(t\beta)\, g\exp(-t\beta)$ exists. This implies that
\[
\lim_{t\mapsto +\infty} \exp(t\beta)gx=\lim_{t\mapsto +\infty}  \bigl(\exp(t\beta)\, g\exp(-t\beta) \bigr)\exp(t\beta )x\, \mathrm{ exists}.
\]
By the definition of the maximal weight, it follows that $\lambda(gx,\beta)<+\infty$. By Proposition \ref{energy-complete}, we get $E(c_{gx}^\beta)<+\infty$,
concluding the proof.
\end{proof}
\begin{lemma}\label{inv2}
Let $g\in G^{\beta-}$. Then $\lambda(gx,\beta)=\lambda(x,\beta)$.
\end{lemma}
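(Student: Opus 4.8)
The plan is to exploit the decomposition $\lambda(x,\beta)=\langle\mup(x),\beta\rangle+E(c_x^\beta)$ recorded in \eqref{energy}: since $\langle\mup(x),\beta\rangle$ is a finite real number, this shows that $\lambda(x,\beta)=+\infty$ precisely when $E(c_x^\beta)=+\infty$. I would therefore split the argument according to whether the energy $E(c_x^\beta)$ is finite, transporting in each case the relevant data from $x$ to $gx$ by means of the results already established, and using throughout that $G^{\beta-}$ is a (parabolic) subgroup of $G$, so that $g^{-1}\in G^{\beta-}$ as well.

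In the case $E(c_x^\beta)=+\infty$: were $E(c_{gx}^\beta)$ finite, then applying Lemma \ref{inv1} to the point $gx$ and the element $g^{-1}\in G^{\beta-}$, and using $g^{-1}(gx)=x$, we would obtain $E(c_x^\beta)<+\infty$, a contradiction. Hence $E(c_{gx}^\beta)=+\infty$ too, and $\lambda(gx,\beta)=+\infty=\lambda(x,\beta)$.

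In the case $E(c_x^\beta)<+\infty$: Lemma \ref{inv1} gives $E(c_{gx}^\beta)<+\infty$, so Proposition \ref{energy-complete} applies to both curves and produces limits $y:=\lim_{t\to+\infty}\exp(t\beta)x\in X^\beta$ and $y':=\lim_{t\to+\infty}\exp(t\beta)gx\in X^\beta$, with $\lambda(x,\beta)=\langle\mup(y),\beta\rangle$ and $\lambda(gx,\beta)=\langle\mup(y'),\beta\rangle$. The heart of the argument is then to identify $y'$: writing $g_t:=\exp(t\beta)g\exp(-t\beta)$ one has $\exp(t\beta)(gx)=g_t\cdot\bigl(\exp(t\beta)x\bigr)$, and since $g\in G^{\beta-}$ the curve $g_t$ converges to $g_\infty:=\pi^{\beta-}(g)\in G^\beta$; continuity of the action map $G\times X\to X$ then forces $y'=g_\infty y$. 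Finally, since $y\in X^\beta$ and $g_\infty\in G$, Proposition \ref{invariance-critical points} gives $\langle\mup(y'),\beta\rangle=\mup^\beta(g_\infty y)=\mup^\beta(y)=\langle\mup(y),\beta\rangle$, which is exactly $\lambda(gx,\beta)=\lambda(x,\beta)$.

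I expect the only point requiring real care to be the identification $y'=g_\infty y$ in the finite-energy case: one must invoke the convergence $g_t\to g_\infty$ which is built into the definition of $G^{\beta-}$, combine it with $\exp(t\beta)x\to y$ through continuity of the $G$-action, and then observe that Proposition \ref{invariance-critical points} precisely neutralises the extra factor $g_\infty\in G^\beta$. Everything else amounts to bookkeeping with the identities already available.
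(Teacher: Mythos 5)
Your argument is correct and follows essentially the same route as the paper: split on whether $E(c_x^\beta)$ is finite, use Lemma \ref{inv1} to transfer finiteness, identify $\lim_{t\to+\infty}\exp(t\beta)gx$ as $\pi^{\beta-}(g)\cdot y$ via the conjugation trick, and neutralise the factor in $G^\beta$ with Proposition \ref{invariance-critical points}. Your handling of the infinite-energy case (applying Lemma \ref{inv1} to $g^{-1}\in G^{\beta-}$ at the point $gx$) is in fact slightly more careful than the paper's one-line appeal to that lemma.
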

\begin{proof}
Assume $E(c_x^\beta)<+\infty$. Then $\lim_{t\mapsto +\infty} \exp(t\beta)x=y$ and $y\in X^\beta$. Since $g\in G^{-\beta}$, it follows that $\lim_{t\mapsto+\infty} \exp(t\beta)\, g \exp(-t\beta)$ exists and it belongs to $G^{\beta}$. Hence
\[
\lim_{t\mapsto +\infty} \exp(t\beta)gx=\lim_{t\mapsto +\infty}   \bigl(\exp(t\beta)\, g\exp(-t\beta) \bigr)\exp(t\beta) x =g_o y,
\]
for some $g_o \in G^\beta$. By Proposition \ref{invariance-critical points}, it follows that
\[
\lambda(x,\beta)=\langle \mup(y), \beta \rangle =\langle \mup(g_o y), \beta \rangle=\lambda(gx,\beta).
\]
If $E(c_x^\beta)=\infty$, then $\lambda(x,\beta)=+\infty$. By Lemma \ref{inv1}, $E(c_{gx}^\beta)=+\infty$ as well and so $\lambda(gx,\beta)=+\infty$.
\end{proof}
Let $g\in G$. By Proposition \ref{parabolic-decomposition}, $g=kh$, where $h\in G^{\beta-}$ and $k\in K$. The following two propositions are similar to the results stated in \cite[Propostion 2.11]{Teleman}.
\begin{proposition}\label{maximal-weight-equivariance}
In the above assumption, we have
\[
\lambda(gx,\beta)=\lambda(x,\mathrm{Ad}(k^{-1})(\beta)).
\]
\end{proposition}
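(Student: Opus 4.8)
The plan is to reduce the identity to two moves for which the work is already done: translating the point by an element of $G^{\beta-}$, which does not change $\la(\cdot,\beta)$ by Lemma \ref{inv2}, and translating it by an element of $K$, which I claim merely conjugates the direction $\beta$. First I would note that, since $G^{\beta-}$ is a parabolic subgroup of $G$, Proposition \ref{parabolic-decomposition} applied to $-\beta$ (so that $G^{(-\beta)+}=G^{\beta-}$) gives $G=KG^{\beta-}$, and inverting this gives $G=G^{\beta-}K$; accordingly, given $g\in G$ I factor it as $g=hk$ with $h\in G^{\beta-}$ and $k\in K$, putting the $G^{\beta-}$-part on the left so that Lemma \ref{inv2} will be applicable below. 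This factorization is unique up to $(h,k)\mapsto(hw,w^{-1}k)$ with $w\in K\cap G^{\beta-}=K^\beta$, and since $\mathrm{Ad}(w)\beta=\beta$ for such $w$, the direction $\mathrm{Ad}(k^{-1})\beta$ is independent of the choice, so the right-hand side is well posed.

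The one genuinely new point is the behaviour of the weight under $K$. For $k\in K$ set $\gamma:=\mathrm{Ad}(k^{-1})\beta$, which again lies in $\liep$ because $\mathrm{Ad}(K)$ preserves $\liep$; then $\exp(t\beta)\,k=k\,\exp(t\gamma)$ for all $t$, so, using the $K$-equivariance of $\mup$,
\[
\mup\big(\exp(t\beta)(kx)\big)=\mup\big(k\,\exp(t\gamma)x\big)=\mathrm{Ad}(k)\,\mup\big(\exp(t\gamma)x\big).
\]
Pairing with $\beta$ and using that the scalar product on $\liep$ is $\mathrm{Ad}(K)$-invariant (because $K\subset U$), I obtain $\la(kx,\beta,t)=\langle\mup(\exp(t\gamma)x),\gamma\rangle=\la(x,\gamma,t)$ for every $t$, and letting $t\to+\infty$,
\[
\la(kx,\beta)=\la\big(x,\mathrm{Ad}(k^{-1})\beta\big)\in\R\cup\{\infty\}.
\]

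It then only remains to chain the two facts: $\la(gx,\beta)=\la\big(h(kx),\beta\big)=\la(kx,\beta)$ by Lemma \ref{inv2} applied at the point $kx$ to the element $h\in G^{\beta-}$, and $\la(kx,\beta)=\la(x,\mathrm{Ad}(k^{-1})\beta)$ by the previous step, which is the asserted equality. I do not anticipate a real obstacle: everything rests on Lemma \ref{inv2} and the trivial relation $\exp(t\beta)k=k\exp(t\,\mathrm{Ad}(k^{-1})\beta)$. The only things to be careful about are keeping the $G^{\beta-}$-factor of $g$ on the outside (so that Lemma \ref{inv2} applies to the already-$K$-translated point $kx$, and not to $hx$ with a tilted direction, for which it would be false), and noting that the argument is insensitive to whether $\la=+\infty$, since Lemma \ref{inv2} and the $K$-step are both valid as stated in that case as well.
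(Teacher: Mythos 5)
Your proof is correct and follows the same strategy as the paper: split $g$ into a $G^{\beta-}$-part, absorbed by Lemma \ref{inv2}, and a $K$-part, handled by the $K$-equivariance of $\mup$ together with $\exp(t\beta)k=k\exp(t\,\mathrm{Ad}(k^{-1})\beta)$. The one substantive difference is the order of the factors, and here your version is the right one: the paper writes $g=kh$ with $h\in G^{\beta-}$, so that $gx=k(hx)$ and the $K$-step reduces $\lambda(gx,\beta)$ to $\lambda(hx,\mathrm{Ad}(k^{-1})\beta)$, to which Lemma \ref{inv2} does not apply (it would require $h\in G^{(\mathrm{Ad}(k^{-1})\beta)-}$). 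Writing $g=hk$ with the $G^{\beta-}$-factor outermost, as you do, is exactly what makes the two lemmas chain correctly, and your observation that the resulting direction $\mathrm{Ad}(k^{-1})\beta$ is independent of the choice of factorization (since the ambiguity lies in $K\cap G^{\beta-}=K^{\beta}$, which fixes $\beta$) is a worthwhile addition not present in the paper. Your treatment of the $\lambda=+\infty$ case, via the pointwise-in-$t$ identity $\lambda(kx,\beta,t)=\lambda(x,\mathrm{Ad}(k^{-1})\beta,t)$, is also sound.
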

\begin{proof}
By the above Lemma, it is enough to prove that  $\lambda(kx,\beta)=\lambda(x,\mathrm{Ad}(k^{-1})(\beta))$. Since $\mup(\exp(t\beta )kx)=k\mup(\exp(t\mathrm{Ad}(k^{-1})(\beta)))$, it follows
\[
\lambda(kx,\beta)=\lim_{t\mapsto +\infty} \langle \mup(\exp(t\mathrm{Ad}(k^{-1})(\beta)x), \mathrm{Ad}(k^{-1})(\beta)\rangle=\lambda (x,\mathrm{Ad}(k^{-1})(\beta)).
\]
\end{proof}
\begin{proposition}\label{Energy-complete}
If $(x_n,\beta_n)_n$ converges to $(x,\beta)$ then $\lambda(x,\beta) \leq \liminf_{n\to \infty}\lambda(x_n, \beta_n).$
\end{proposition}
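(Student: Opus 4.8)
The assertion is precisely the lower semicontinuity of the maximal weight function, and the plan is to realize $\lambda$ as the supremum of a family of jointly continuous functions, after which lower semicontinuity is automatic. Recall that for $t\ge 0$ one sets $\lambda(x,\beta,t)=\langle\mup(\exp(t\beta)x),\beta\rangle$. Since $\mup:X\to\liep$ is smooth, the action map $G\times X\to X$ is smooth, and $\beta\mapsto\exp(\beta)$ is smooth, the map $(x,\beta,t)\mapsto\lambda(x,\beta,t)$ is continuous on $X\times\liep\times[0,\infty)$; in particular, for each fixed $t\ge 0$ the function $(x,\beta)\mapsto\lambda(x,\beta,t)$ is continuous.

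First I would record the monotonicity already established above: $\tfrac{d}{dt}\lambda(x,\beta,t)=\|\beta_X(\exp(t\beta)x)\|^2\ge 0$, so $t\mapsto\lambda(x,\beta,t)$ is nondecreasing, whence
\[
\lambda(x,\beta)=\lim_{t\to\infty}\lambda(x,\beta,t)=\sup_{t\ge 0}\lambda(x,\beta,t)\in\R\cup\{+\infty\}.
\]
Now fix $t\ge 0$. For every $n$ the monotonicity gives $\lambda(x_n,\beta_n,t)\le\lambda(x_n,\beta_n)$. Taking $\liminf_{n\to\infty}$ and using the continuity of $\lambda(\cdot,\cdot,t)$ at $(x,\beta)$, we obtain
\[
\lambda(x,\beta,t)=\lim_{n\to\infty}\lambda(x_n,\beta_n,t)\le\liminf_{n\to\infty}\lambda(x_n,\beta_n).
\]
Since $t\ge 0$ was arbitrary, taking the supremum over $t$ on the left-hand side and using the displayed identity for $\lambda(x,\beta)$ yields $\lambda(x,\beta)\le\liminf_{n\to\infty}\lambda(x_n,\beta_n)$, which is the claim.

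I do not expect a genuine obstacle here: the proof is the standard fact that a supremum of continuous functions is lower semicontinuous. The only points requiring a little care are the bookkeeping with the value $+\infty$ (if $\liminf_n\lambda(x_n,\beta_n)=+\infty$ there is nothing to prove; if it is finite, the chain of inequalities above bounds $\lambda(x,\beta,t)$ uniformly in $t$, hence bounds $\lambda(x,\beta)$), and checking that $(x,\beta,t)\mapsto\lambda(x,\beta,t)$ is jointly continuous, which is immediate from the smoothness of $\mup$ and of the $G$-action together with the smoothness of the exponential map.
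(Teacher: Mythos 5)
Your proof is correct and uses exactly the same two ingredients as the paper's: the monotonicity of $t\mapsto\lambda(x,\beta,t)$ (so that $\lambda(x,\beta)=\sup_{t\ge 0}\lambda(x,\beta,t)$) and the continuity of $(x,\beta)\mapsto\lambda(x,\beta,t)$ for fixed $t$. The only difference is presentational: you argue directly via ``a supremum of continuous functions is lower semicontinuous,'' whereas the paper packages the same estimate as a proof by contradiction with a subsequence extraction.
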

\begin{proof}
We prove by contradiction. If the statement was false there would exist $\epsilon > 0$ and a subsequence $(x_{n_m}, \beta_{n_m})_m$ of $(x_n, \beta_n)_n$ such that the limit $\lim_{m\to\infty}\lambda(x_{n_m}, \beta_{n_m})$ exists, finite and $\lambda(x,\beta)\geq \lim_{m\to +\infty}\lambda(x_{n_m}, \beta_{n_m}) + \epsilon.$ We can choose sufficiently large $t$ such that $\lambda(x,\beta,t)\geq \lim_{m\to +\infty}\lambda(x_{n_m}, \beta_{n_m}) + \frac{\epsilon}{2}.$ However, since $\lambda(x_{n_m}, \beta_{n_m}) \geq \lambda(x_{n_m}, \beta_{n_m},t),$ (because $\lambda(x,\beta,t)$ is an increasing function) and $(x, \beta) \mapsto \lambda(x,\beta,t)$ is continuous on $X\times \liep,$ we have
 $$
 \lim_{m\to +\infty}\lambda(x_{n_m}, \beta_{n_m})\geq \lim_{m\to +\infty}\lambda(x_{n_m}, \beta_{n_m}, t) = \lambda(x,\beta,t) \geq \lim_{m\to +\infty}\lambda(x_{n_m}, \beta_{n_m}) + \frac{\epsilon}{2}.
 $$ which is not possible.
\end{proof}
The following result is the moment weight inequality which is an important ingredient to prove our results. We give a proof of this inequality applying the idea in \cite{Salamon,Chen} which was due to Xiuxiong Chen to our case.

\begin{theorem}\label{GMWI}(Moment-Weight Inequality). For every $x\in X,$ $\beta\in \mathfrak{p}\backslash\{0\}$ and $g\in G,$
$$\frac{-\la(x,\beta)}{\parallel \beta\parallel} \leq \parallel\mu_\mathfrak{p}(gx)\parallel.$$
\end{theorem}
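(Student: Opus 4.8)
The plan is to reduce the inequality to a computation along the geodesic $t \mapsto \exp(t\beta)$ in the symmetric space $M = G/K$, using the convexity of the Kempf-Ness function together with the gradient estimate coming from the identity $\mathrm{grad}\,\mu_\mathfrak p^\beta = \beta_X$. First I would fix $x \in X$, $\beta \in \liep \setminus \{0\}$ and $g \in G$, and observe that by the cocycle condition and the equivariance property of $\lambda$ (Proposition \ref{maximal-weight-equivariance}), it suffices to handle the case $g = e$; indeed replacing $x$ by $gx$ changes $\lambda(x,\beta)$ in a controlled way, and writing $g = kh$ with $h \in G^{\beta-}$, $k \in K$ reduces matters to understanding $\lambda(hx,\beta) = \lambda(x,\beta)$ (Lemma \ref{inv2}) and the $K$-equivariance $|\mu_\liep(kx)| = |\mu_\liep(x)|$. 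Actually the cleanest route is to prove directly: for all $g$,
\[
-\lambda(x,\beta) \le |\beta|\cdot|\mu_\liep(gx)|,
\]
by considering the function $\varphi(t) = \Phi(gx, \exp(t\,\mathrm{Ad}(g^{-1})\beta))$ or, following Chen's argument as in \cite{Salamon}, by estimating the derivative of $t \mapsto \mu_\liep^\beta(\exp(t\beta)x)$ against $|\mu_\liep(\exp(t\beta)x)|$.

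The key steps, in order, are as follows. (i) Recall $\lambda(x,\beta,t) = \langle \mu_\liep(\exp(t\beta)x),\beta\rangle$ is nondecreasing in $t$, so $-\lambda(x,\beta) = -\lim_{t\to\infty}\lambda(x,\beta,t) \le -\lambda(x,\beta,0) = -\langle\mu_\liep(x),\beta\rangle$; more importantly $-\lambda(x,\beta) \le -\lambda(x,\beta,t)$ for every $t$. (ii) For the general $g$, use the cocycle identity to compare $\Phi(gx,\cdot)$ with $\Phi(x,\cdot)$: one has $\frac{d}{dt}\Phi(gx,\exp(tv)) = \langle \mu_\liep(\exp(tv)gx), v\rangle$, and by Cauchy--Schwarz this is $\ge -|v|\cdot|\mu_\liep(\exp(tv)gx)|$. (iii) The heart of the matter is Chen's trick: along the path $g_t = \exp(t\beta)$ in $G$ acting on $gx$, estimate how $\langle \mu_\liep(g_t \cdot gx),\beta\rangle$ evolves and bound $|\mu_\liep(g_t\cdot gx)|$ from below by a quantity converging to $-\lambda(x,\beta)/|\beta|$; concretely, one writes a differential inequality for $h(t) := |\mu_\liep(\exp(t\beta)gx)|$ using $\frac{d}{dt}\langle\mu_\liep(\exp(t\beta)gx),\beta\rangle = \|\beta_X(\exp(t\beta)gx)\|^2 \ge 0$ and the Cauchy--Schwarz bound $\langle\mu_\liep,\beta\rangle \le |\beta|\,h$, then integrates. (iv) Passing to the limit $t \to \infty$ (and using that $\lambda(gx,\beta)$ and $\lambda(x,\beta)$ differ only through the $K$-part via Proposition \ref{maximal-weight-equivariance}, whose norm is $|\beta|$-preserving) yields the stated bound.

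I expect the main obstacle to be step (iii): making Chen's ODE argument work in the gradient-map (real group) setting rather than the momentum-map (complex group) setting of \cite{Salamon}. In the \keler case one exploits that $\beta$ generates a holomorphic flow and the momentum map behaves nicely under it; here the analogous statement is that $\mathrm{grad}\,\mu_\liep^\beta = \beta_X$ and that $t \mapsto \mu_\liep^\beta(\exp(t\beta)x)$ is convex (its derivative $\|\beta_X(\exp(t\beta)x)\|^2$ is what appears in $E(c_x^\beta)$), so the inequality $\frac{d}{dt}|\mu_\liep(\exp(t\beta)x)| \ge \frac{1}{|\beta|}\frac{d}{dt}\langle\mu_\liep(\exp(t\beta)x),\beta\rangle$ minus a curvature-type error term must be controlled. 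A clean way to avoid curvature estimates is to bound directly: for each $t$, $|\mu_\liep(\exp(t\beta)gx)| \ge \frac{1}{|\beta|}\langle \mu_\liep(\exp(t\beta)gx), \beta\rangle = \frac{1}{|\beta|}\lambda(gx,\beta,t) \ge \frac{1}{|\beta|}\lambda(gx,\beta,0) = \frac{1}{|\beta|}\langle\mu_\liep(gx),\beta\rangle$, which is too weak as stated; so the actual argument must instead choose the direction adaptively, replacing $\beta$ by the (normalized) path of negative gradient directions of the norm-square $f = \tfrac12|\mu_\liep|^2$ and using that $\lambda(\cdot,\cdot)$ is lower semicontinuous (Proposition \ref{Energy-complete}) and $G$-equivariant. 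The remaining steps (i), (ii), (iv) are routine given the Kempf-Ness formalism and the equivariance lemmas already established.
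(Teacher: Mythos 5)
There is a genuine gap at the heart of the argument. The difficulty in this theorem is to relate $\la(x,\beta)$, which is a limit along the single ray $t\mapsto \exp(t\beta)x$, to $|\mup(gx)|$ at an \emph{arbitrary} point of the orbit; your proposal circles this difficulty three times without crossing it. The reduction to $g=e$ via Proposition \ref{maximal-weight-equivariance} does not work: writing $g=kh$ with $h\in G^{\beta-}$, $k\in K$, that proposition gives $\la(gx,\beta)=\la(x,\Ad(k^{-1})\beta)$, where $k$ itself depends on $\beta$, so applying the case $g=e$ at the point $gx$ only yields $-\la(x,\Ad(k^{-1})\beta)/|\beta|\leq|\mup(gx)|$ --- the inequality for a different direction, with no way to solve for the original $\beta$. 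The ``differential inequality for $h(t)=|\mup(\exp(t\beta)gx)|$'' of step (iii) is never written down, and as you yourself concede in the final paragraph, the only estimate you actually produce (Cauchy--Schwarz against $\beta$ at each time $t$) is too weak. The fallback you then sketch --- following the negative gradient flow of $f=\tfrac12|\mup|^2$ with adaptively chosen directions --- is a different and substantially harder program (it is how one approaches the sharp moment-weight \emph{equality}), and it is not carried out either. So no complete chain of inequalities from $-\la(x,\beta)/|\beta|$ to $|\mup(gx)|$ is ever established.

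For comparison, the mechanism the paper uses (following Chen and Georgulas--Robbin--Salamon) is none of the three you propose. One Cartan-decomposes $\exp(t\beta)g^{-1}=\exp(\alpha(t))k(t)$ with $\alpha(t)\in\liep$, $k(t)\in K$; the key geometric input is the bounded-distance estimate $|t\beta-\alpha(t)|\leq c$, which forces $\alpha(t)/|\alpha(t)|\to\beta/|\beta|$. Then, for each fixed $t$, a \emph{single} application of the monotonicity of $s\mapsto\langle\mup(\exp(s\xi)gx),\xi\rangle$ (Lemma \ref{increasing}) in the direction $\xi=\Ad(k(t)^{-1})\alpha(t)$, compared between $s=0$ and $s=1$, together with $K$-equivariance and the identity $\exp(\Ad(k^{-1})\alpha)g=k^{-1}\exp(t\beta)$, transports the Cauchy--Schwarz lower bound $-|\mup(gx)|$ at the point $gx$ to the quantity $\langle\mup(\exp(t\beta)x),\alpha(t)/|\alpha(t)|\rangle$; letting $t\to\infty$ and using the asymptotic alignment of $\alpha(t)$ with $\beta$ gives the claim. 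If you want a genuinely different route, the convexity of $\Phi_x$ along geodesics of $M=G/K$ combined with the gradient formula $|\nabla\Phi_x(\pi(g^{-1}))|=|\mup(gx)|$ and Lemma \ref{function} also works, but that requires the full geodesic convexity on $M$ (stated in the paper only for $X$ compact), not merely the one-variable convexity of $t\mapsto\Phi(x,\exp(tv))$ that your step (ii) invokes.
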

\begin{proof}
Let $x\in X$ and $\beta\in \mathfrak{p}\backslash\{0\}$, and $g\in G.$ If $\lambda(x,\beta)=+\infty$, then the result follows. Assume that $\lambda(x,\beta)<+\infty$. By Proposition \ref{energy-complete}, $\lim_{t\mapsto +\infty} \exp(t\beta)x$ exists.

Define $\alpha : [0, \infty) \to \mathfrak{p}$ and $k : [0, \infty) \to K$ such that \begin{equation}\label{ete}
\exp(\alpha(t))k(t) = \exp(t\beta)g^{-1}.
\end{equation}
We claim that
\[
\lim_{t\mapsto +\infty} \frac{\alpha(t)}{\parallel \alpha (t) \parallel}=\frac{\beta}{\parallel \beta \parallel}.
\]
To prove this, observe that $\exp(-\alpha(t))\exp(t\beta)g^{-1}\in K.$ Then from Lemma C.2 in \cite{Salamon}, there is a positive constant $c$ such that $\parallel t\beta - \alpha(t)\parallel \leq c$ for all $t\in [0, \infty).$ Therefore, for any $t\in (0,+\infty)$, we have
\begin{align*}
    \left\vert\left\vert \frac{\beta}{\parallel \beta\parallel} - \frac{\alpha(t)}{\parallel \alpha(t)\parallel} \right\vert\right\vert &= \left\vert\left\vert\frac{t\beta}{t\parallel\beta\parallel} -\frac{\alpha(t)}{t\parallel\beta\parallel} + \frac{\alpha(t)}{t\parallel\beta\parallel} - \frac{\alpha(t)}{\parallel\alpha(t)\parallel} \right\vert\right\vert\\
    &\leq \frac{\parallel t\beta - \alpha(t)\parallel}{t\parallel\beta\parallel} + \parallel\alpha(t)\parallel\left| \frac{1}{t\parallel\beta\parallel} -  \frac{1}{\parallel\alpha(t)\parallel} \right|\\
    & = \frac{\parallel t\beta - \alpha(t)\parallel}{t\parallel \beta\parallel} +  \frac{|\parallel t\beta\parallel - \parallel \alpha(t)\parallel|}{t\parallel \beta\parallel}\\
    &\leq \frac{2c}{t\parallel\beta \parallel}.
\end{align*}
Therefore $$\lim_{t\to +\infty}\frac{\alpha(t)}{\parallel \alpha(t)\parallel} = \frac{\beta}{\parallel \beta\parallel}.$$ 
For any $t\in (0,+\infty)$, applying Lemma \ref{increasing}, the function $$s\mapsto g(s)= \langle \mu_\mathfrak{p}(\exp\left(sk^{-1} (t)\alpha(t) k(t)\right) gx), k^{-1}(t)\alpha(t) k(t)\rangle$$ is nondecreasing. In particular
\[
g(0)=\langle \mup(gx),k^{-1}(t)\alpha(t) k(t) \rangle \leq g(1)= \langle \mu_\mathfrak{p}\left( \exp (k^{-1} (t)\alpha(t) k(t)) gx \right), k^{-1}(t)\alpha(t) k(t)\rangle.
\]
Therefore, keeping in mind that $\exp(k^{-1}(t)\alpha(t) k(t))=k^{-1}(t)\exp(t\beta)$ and $\mup$ is $K$-equivariant, we have
\begin{align*}
    -\parallel \mu_\mathfrak{p}(gx)\parallel&\leq \parallel \alpha(t) \parallel^{-1}\langle \mu_\mathfrak{p}(gx), k^{-1}(t) \alpha(t) k(t) \rangle\\
    &\leq \parallel \alpha (t) \parallel^{-1}\langle \mu_\mathfrak{p}(\exp(k^{-1}(t)\alpha(t) k(t))gx), k^{-1}(t)\alpha (t) k(t)\rangle \\
    &= \parallel \alpha (t) \parallel^{-1}\langle \mu_\mathfrak{p}(k^{-1}(t)\exp(t\beta)x), k^{-1}(t)\alpha(t) k(t)\rangle\\
    & = \langle \mu_\mathfrak{p}(\exp(t\beta)x), \frac{\alpha(t)}{\parallel \alpha(t) \parallel} \rangle\\
    & = \left\langle \mu_\mathfrak{p}(\exp(t\beta)x), \frac{\alpha(t)}{\parallel \alpha (t)\parallel}-\frac{\beta}{\parallel \beta\parallel}+\frac{\beta}{\parallel \beta\parallel} \right\rangle\\
    & = \parallel \beta\parallel^{-1}\langle \mu_\mathfrak{p}(\exp(t\beta)x), \beta\rangle + \left\langle \mu_\mathfrak{p}(\exp(t\beta)x), \frac{\alpha (t)}{\parallel\alpha(t)\parallel}-\frac{\beta}{\parallel\beta\parallel} \right\rangle.
\end{align*}
Since $\lim_{t\mapsto+\infty} \exp(t\beta)x$ exists,
taking the limit $t\to +\infty,$ we have
$$
-\parallel \mu_\mathfrak{p}(gx)\parallel\leq\lim_{t\to +\infty}\frac{\langle \mu_\mathfrak{p}(\exp(t\beta)x), \beta\rangle}{\parallel \beta\parallel} = \frac{\la(x,\beta)}{\parallel\beta\parallel}.
$$ Hence, $$\frac{-\la(x,\beta)}{\parallel \beta\parallel} \leq \parallel \mu_\mathfrak{p}(gx)\parallel$$

%Define $x: \mathbb{R}\to X$ by $x(t) := \text{exp}(t\beta)x_0$ as in Lemma \ref{limit}. The function $\langle \mu_\mathfrak{p}(x(t)),\beta\rangle$ is a nondecreasing function of $t.$ Hence
%$$
%\langle \mu_\mathfrak{p}(x_0),\beta\rangle = \langle \mu_\mathfrak{p}(x(0)),\beta\rangle\leq \lim_{t\to\infty}\langle \mu_\mathfrak{p}(x(t)),\beta\rangle = \la(x_0,\beta),
%$$and by Cauchy-Schwarz inequality
%$$
%-\la(x_0,\beta) \leq -\langle \mu_\mathfrak{p}(x_0),\beta\rangle = \langle \mu_\mathfrak{p}(x_0),-\beta\rangle\leq |\mu_\mathfrak{p}(x_0)||\beta|.
%$$ Hence $$\frac{-\la(x_0,\beta)}{|\beta|} \leq |\mu_\mathfrak{p}(x_0)|\quad \text{for} \quad g = 1.$$ Now let $x\in X,$ $\beta\in \mathfrak{p}\backslash\{0\}$ and $g\in G.$ Then applying the first step of the proof to $(gx,\text{ad}_g(\beta))\in X\times \mathfrak{p}\backslash\{0\}$ and by Proposition \ref{Energy-complete} we have
%$$
%|\mu_\mathfrak{p}(gx)|\geq \frac{-\la(gx,\text{ad}_g(\beta))}{|\text{ad}_g(\beta)|} = \frac{-\la(gx,\text{ad}_g(\beta))}{|\text{ad}_g(\beta)|} = \frac{-\la(x,\beta)}{|\beta|}
%$$
\end{proof}
As an application of the general moment-weight inequality, we have the following:
\begin{theorem}\label{symplectic-Analytic}
If $x\in X$ is semistable, then  $\la(x,\beta)\geq 0$ for any $\beta\in\mathfrak{p}$
\end{theorem}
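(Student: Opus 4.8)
The plan is to derive this as an essentially immediate consequence of the Moment-Weight Inequality (Theorem \ref{GMWI}) combined with the definition of semistability, so the real work has already been done.

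First I would unwind the hypothesis. Saying that $x\in X$ is semistable means $\overline{G\cdot x}\cap \mup^{-1}(0)\neq\emptyset$, so there exist a point $y\in X$ with $\mup(y)=0$ and a sequence $g_n\in G$ with $g_n x\to y$. Since $\mup$ is continuous, this gives $\mup(g_n x)\to\mup(y)=0$, hence $|\mup(g_n x)|\to 0$ as $n\to\infty$.

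Next, fix $\beta\in\liep\setminus\{0\}$. Applying Theorem \ref{GMWI} with $g=g_n$ yields
\[
\frac{-\la(x,\beta)}{|\beta|}\le |\mup(g_n x)|
\]
for every $n$. Letting $n\to\infty$ and using the previous step gives $-\la(x,\beta)/|\beta|\le 0$, i.e. $\la(x,\beta)\ge 0$. The remaining case $\beta=0$ is trivial: from \eqref{energy} one has $\la(x,0)=\langle\mup(x),0\rangle+E(c_x^0)=0$.

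I do not expect a genuine obstacle here; the only point to keep in mind is that $\la(x,\beta)$ may a priori be $+\infty$, but then $-\la(x,\beta)/|\beta|=-\infty$ and the inequality $-\la(x,\beta)/|\beta|\le |\mup(g_n x)|$ holds vacuously, so the argument goes through unchanged. Thus the entire content of the statement is carried by the moment-weight inequality established above, and what remains is just the limiting argument sketched here.
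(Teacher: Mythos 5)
Your argument is correct and is essentially the paper's proof: the paper likewise observes that semistability forces $\inf_{g\in G}|\mup(gx)|=0$ and then invokes Theorem \ref{GMWI}, merely phrasing the final step as a contradiction rather than as a direct limit. Your explicit handling of the cases $\beta=0$ and $\la(x,\beta)=+\infty$ is a harmless refinement of the same argument.
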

\begin{proof}
If $x\in X$ is semistable, then $\text{inf}_G\parallel \mu_\mathfrak{p}(gx)\parallel = 0$. Suppose by contradiction that there exists a $\beta\in\mathfrak{p}\backslash\{0\}$ such that $\la(x,\beta) < 0.$ Then by Theorem \ref{GMWI}, for any $g\in G$, we have
$$0< \frac{-\la(x,\beta)}{\parallel \beta\parallel} \leq \parallel\mu_\mathfrak{p}(gx)\parallel$$ and so $\text{inf}_G \parallel \mu_\mathfrak{p}(gx)\parallel > 0.$ which is a contradiction. Therefore $\la(x,\beta)\geq 0$ for any $\beta\in\mathfrak{p}.$
\end{proof}
\section{Analytic semistability and Polystability}\label{final-section}
\begin{definition}
A point $x\in X$ is called:
\begin{enumerate}
\item analytically stable if $\lambda(x,\beta) >0$ for any $\beta \in \liep \backslash\{0\}$;
\item analytically semi-stable if $\lambda(x,\beta) \geq 0$ for any $\beta \in \liep$;
\item analytically polystable if $\lambda(x,\beta) \geq 0$ for any $\beta \in \liep$ and the condition $\lambda(x,\beta)=0$ holds if and only if $\lim_{t\mapsto +\infty} \exp(t\beta) \in G\cdot x$.
\end{enumerate}
\end{definition}
Theorem \ref{stable} proves that $x$ is stable if and only if  $x$ is analytically stable. From now on, we assume that the $G$-action on $X$ is energy complete. What follows are the Hilbert numerical criteria for polystability and semistability under the assumption that $X$ is energy complete. We begin with the following Lemmata. 
\begin{lemma}\label{linearization2}
Let $x\in X$ and $\beta, \alpha \in \liep$ be such that $[\beta, \alpha]= 0.$ Suppose that limits $y:= \lim_{t\to +\infty}\exp(t\beta)x$, $z:= \lim_{t\to +\infty}\exp(t\alpha)y$ exists. By the energy completeness property the lemma will be needed only when these limits exist.
Then there exists $\delta>0$ such that  for $0<\epsilon<\delta$, we have
$$
\lim_{t\to +\infty}\exp(t(\beta + \epsilon\alpha))x = z.
$$
\end{lemma}
\begin{proof}
Fix $x\in X.$ Then $y\in X^\beta$ and $z\in X^\alpha\cap X^\beta.$ Let $\mathfrak{a} = \text{span}(\alpha, \beta)$ and $A = \exp(\mathfrak{a}).$  Since the exponential map is a diffeomorphism restricted on $\liep$, it follows that $A$ is a closed and compatible subgroup of $G$. Then $z$ is fixed by $A$ and by the linearization theorem, Corollary \ref{slice-cor-2}, there exists $A$-invariant open subsets $\Omega \subset X$ and $S\subset T_z X$ and a $A$-equivariant diffeomorphism $\phi : S \to \Omega$ such that $0\in S,$ $z\in \Omega$, $\phi(0) = z,$ $d\phi_0 = id_{T_z X}.$ Since $z= \lim_{t\to +\infty}\exp(t\alpha)y$, there is $t_0$ such that $\exp(t_0\alpha)y\in \Omega.$ Since $\Omega$ is $A$-invariant, we get $y\in\Omega$ and also, $x\in\Omega.$ Thus we can study all the limits in the linearization $S$. Hence, keeping in mind Proposition \ref{linearization1}, we may assume that $\Omega=\R^n$, $\alpha,\beta$ are symmetric matrices of order $n$ satisfying $[\alpha,\beta] = 0.$ From now on, the proof is similar to one given in \cite[pag. 1036]{BT}. For sake of completeness we give the proof.

The matrices $\alpha$ and $\beta$ are simultaneously diagonalizable. Decompose $V=\bigoplus_{\lambda \in \mathrm{Spec}(\beta)} V_{\lambda}$. This means $\beta_{|_{V_{\lambda}}}=\lambda_k \mathrm{Id}_{V_{\lambda_k}}$. Since $\lim_{t\mapsto +\infty} \exp(t\beta)x=y$, it follows that $x=v_0+v_1$, where $v_0 \in V_0$ and $v_1$ is the sum of some eigenvetors corresponding to negative eigenvalues. Therefore $\lim_{t\mapsto +\infty} \exp(t\beta)x=v_0+  \lim_{t\mapsto +\infty} \exp(t\beta)v_1$ and $\lim_{t\mapsto +\infty} \exp(t\beta)v_1=0$. This implies $v_0=y$. 

Let
\[
\delta=\mathrm{min} \left\{ \frac{-\lambda}{2|\mu|}:\, \lambda \in \mathrm{Spec}(\beta) \cap (0,-\infty),\, \mu \in \mathrm{Spec}(\alpha)\backslash \{0\} \right\}.
\]
If $\lambda \in \mathrm{Spec}(\beta) \cap (0,-\infty)$, then
\[
V_{\lambda}=W_0 \cap V_\lambda \bigoplus_{\mu \in \mathrm{Spec}(\alpha)\backslash\{0\}} (V_\lambda \cap W_\mu),
\]
where $W_0=\mathrm{Ker}\, \alpha$ and $\alpha_{|_{W_{\mu}}}=\mu \mathrm{Id}_{W_{\mu}}$.
Let $\epsilon <\delta $ and let $v\in V_\lambda$. Then $v=w_0+\sum_{\mu \in \mathrm{Spec}(\alpha)\backslash \{0\}} w_\mu$ and so
\[
(\alpha+\epsilon \beta)v=\lambda w_0+\sum_{\mu \in \mathrm{Spec}(\alpha)\backslash\{0\}} (\lambda+\epsilon \mu)w_\mu
\]
with $\lambda+\epsilon \mu <0$ for every $\mu \in \mathrm{Spec}(\alpha)\backslash\{0\}$. Therefore $\lim_{t\mapsto+\infty} \exp(t(\beta+\epsilon\alpha))v=0$. This holds for any $\lambda \in \mathrm{Spec}(\beta) \cap (-\infty,0)$. Now, keeping in mind that $x=y+v_1$, where $v_1$ is the sum of eigenvectors of $\beta$ associated to negative eigenvalues, we have
\[
\begin{split}
\lim_{t\mapsto+\infty}
\exp(t(\beta+\epsilon \alpha))x
&=\lim_{t\mapsto+\infty} \exp(t(\beta+\epsilon \alpha))(y+v_1) \\
&=\lim_{t\mapsto+\infty} \exp(t\epsilon \alpha))y+\lim_{t\mapsto+\infty} \exp(t(\beta+\epsilon \alpha))v_1\\
&=\lim_{t\mapsto+\infty} \exp(t\alpha))y \\
&=z.
\end{split}
\]
\end{proof}
\begin{lemma}\label{criterio1}
Let $x\in X$ be an analytically semistable point. Let $\beta \in \liep$ be such that $\lambda(x,\beta)=0$. Let $y=\lim_{t\mapsto +\infty} \exp(t\beta )x$. Then $\lambda(y,\alpha)\geq 0$ for any $\alpha \in \liep^\beta$.
\end{lemma}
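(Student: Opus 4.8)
The plan is to argue by contradiction, perturbing the direction $\beta$ by a small multiple of $\alpha$ so as to violate the analytic semistability of $x$.

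First I would unpack the hypotheses by means of Proposition~\ref{energy-complete}. Since $x$ is analytically semistable, $\lambda(x,\beta)=0\le 0$, so $E(c_x^\beta)<\infty$; by energy completeness the limit $y=\lim_{t\to+\infty}\exp(t\beta)x$ exists, lies in $X^\beta$, and satisfies $\langle\mup(y),\beta\rangle=\lambda(x,\beta)=0$. Now fix $\alpha\in\liep^\beta$ and suppose, toward a contradiction, that $\lambda(y,\alpha)<0$. Applying the same proposition to the pair $(y,\alpha)$ — recall $[\alpha,\beta]=0$ — we obtain $E(c_y^\alpha)<\infty$, hence $z:=\lim_{t\to+\infty}\exp(t\alpha)y$ exists, lies in $X^\alpha$, and $\langle\mup(z),\alpha\rangle=\lambda(y,\alpha)<0$.

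The crux is then to show that $\langle\mup(z),\beta\rangle=0$ as well. Because $[\alpha,\beta]=0$ we have $\exp(\R\alpha)\subset G^\beta$, and since $y\in X^\beta$, Proposition~\ref{invariance-critical points} gives $\mup^\beta(\exp(t\alpha)y)=\mup^\beta(y)$ for every $t$; letting $t\to+\infty$ and using the continuity of $\mup$ yields $\langle\mup(z),\beta\rangle=\langle\mup(y),\beta\rangle=0$. Next I would feed the two existing limits $y$ and $z$ into Proposition~\ref{linearization2}: with $\delta>0$ the constant supplied by Lemma~\ref{linearization} for the commuting pair $(\beta,\alpha)$, for every $\epsilon\in(0,\delta)$ one has $\lim_{t\to+\infty}\exp(t(\beta+\epsilon\alpha))x=z$. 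Consequently, again by continuity of $\mup$,
$$
\lambda(x,\beta+\epsilon\alpha)=\lim_{t\to+\infty}\big\langle\mup(\exp(t(\beta+\epsilon\alpha))x),\,\beta+\epsilon\alpha\big\rangle=\langle\mup(z),\beta\rangle+\epsilon\langle\mup(z),\alpha\rangle=\epsilon\,\lambda(y,\alpha)<0 .
$$
Since $\beta+\epsilon\alpha\in\liep$, this contradicts the analytic semistability of $x$, so $\lambda(y,\alpha)\ge 0$, as desired.

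I expect the only genuine subtleties to be bookkeeping ones: first, making sure that $z$ really exists before invoking Proposition~\ref{linearization2} — this is exactly where the standing energy-completeness hypothesis is used, via Proposition~\ref{energy-complete} — and second, the invariance identity $\mup^\beta(\exp(t\alpha)y)=\mup^\beta(y)$, which relies on $\alpha\in\liep^\beta$ so that $\exp(\R\alpha)$ lies in $G^\beta$ and preserves $X^\beta$. Everything else is a direct chaining of the cited results, and I do not anticipate a serious obstacle beyond this.
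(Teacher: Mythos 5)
Your proposal is correct and follows essentially the same route as the paper's own proof: contradiction via $\lambda(y,\alpha)<0$, existence of $z$ from Proposition~\ref{energy-complete}, the limit identity $\lim_{t\to+\infty}\exp(t(\beta+\epsilon\alpha))x=z$ from Proposition~\ref{linearization2}, and the computation $\lambda(x,\beta+\epsilon\alpha)=\epsilon\lambda(y,\alpha)<0$ using $\mup^\beta(z)=\mup^\beta(y)=0$. Your justification of $\mup^\beta(z)=0$ via Proposition~\ref{invariance-critical points} is exactly the paper's "constant along the curve $\exp(t\alpha)y$" step, made slightly more explicit.
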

\begin{proof}
Suppose by contradiction, there exists $\alpha\in \liep^\beta$ with $\lambda(y,\alpha) < 0.$ By Proposition \ref{energy-complete},
$$
\lim_{t\to +\infty}\exp(t\alpha)y = z
$$ exists. Let $A = \exp(\text{span}(\beta, \alpha))$, $[\beta, \alpha] = 0$ by the choice of $\alpha$. Since $y\in X^\beta$ and the flow $\exp(t\alpha$ preserves $X^\beta$, it follows that $z\in X^A$, where $X^A = \{x\in X: A\cdot x= x\}.$

By Lemma \ref{linearization2}, for all sufficiently small $\epsilon > 0,$
$$
\lim_{t\to +\infty}\exp(t(\beta + \epsilon\alpha))x = z.
$$
Hence,
\begin{align*}
\lambda(x, \beta + \epsilon\alpha) &= \lim_{t\to +\infty}\langle \mu_\liep(\exp(t(\beta + \epsilon\alpha)x)), \beta + \epsilon\alpha\rangle \\
&= \langle \mu_\liep(z), \beta + \epsilon\alpha\rangle\\
&= \mu_\liep^\beta(z) + \epsilon\langle \mu_\liep(z),\alpha\rangle\\
&= \mu_\liep^\beta(z) + \epsilon\lim_{t\to +\infty}\langle \mu_\liep(\exp(t\alpha)y),\alpha\rangle = \mu_\liep^\beta(z) + \epsilon\lambda(y,\alpha).
\end{align*} But by the choice of $\beta,$
$$
0 = \lambda(x,\beta) = \lim_{t\to +\infty}\langle \mu_\liep(\exp(t\beta)x), \beta\rangle = \langle \mu_\liep(y), \beta\rangle = \mu_\liep^\beta(y)
$$ and $\mu_\liep^\beta$ is constant along the curve $\exp{(t\alpha)y}$. This implies $\mu_\liep^\beta(z) = 0$. Hence,
$$
\lambda(x, \beta + \epsilon\alpha) = \epsilon\lambda(y,\alpha) < 0,
$$ which contradicts the analytic semistability of $x$.
\end{proof}
\begin{lemma}\label{criterio2}
Let $x\in X^\beta$. If $x$ is $G^\beta$-semistable, polystable or stable then $x$ is $G$-semistable.
\end{lemma}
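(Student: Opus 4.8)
The plan is to notice that the three hypotheses are nested: $G^\beta$-stable $\Rightarrow$ $G^\beta$-polystable $\Rightarrow$ $G^\beta$-semistable, the last implication because $G^\beta\cdot x\subseteq\overline{G^\beta\cdot x}$. Thus it suffices to treat the case in which $x$ is $G^\beta$-semistable and to deduce that $x$ is $G$-semistable.

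First I would set up the objects attached to $X^\beta$. By Lemma \ref{lemcomp3} (applied with $E=\{\beta\}$) the group $G^\beta$ is a compatible subgroup of $U^\C$; by Proposition \ref{invariance-critical points} the submanifold $X^\beta$ is $G^\beta$-invariant; and by Proposition \ref{gradient-restriction} the $G^\beta$-gradient map on $X^\beta$ is exactly the restriction $(\mup)|_{X^\beta}$. Hence, computed inside $X^\beta$, the zero set of the $G^\beta$-gradient map is precisely $X^\beta\cap\mup^{-1}(0)$. I would also record that $X^\beta=\{z\in X:\beta_X(z)=0\}$ is the zero locus of a continuous vector field, hence closed in $X$; therefore the closure in $X^\beta$ of any subset of $X^\beta$ coincides with its closure taken in $X$.

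With this in hand the conclusion is immediate. If $x$ is $G^\beta$-semistable there is a point $p\in\overline{G^\beta\cdot x}$ with $\mup(p)=0$, where by the previous remark the closure may be taken in $X$. Since $G^\beta\subseteq G$ we have $G^\beta\cdot x\subseteq G\cdot x$, so $p\in\overline{G\cdot x}\cap\mup^{-1}(0)$, i.e. $x$ is $G$-semistable. In the polystable case one even gets $p=gx$ for some $g\in G^\beta\subseteq G$, so $x$ is in fact $G$-polystable, and a fortiori $G$-semistable.

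The argument presents no real obstacle; the one point that genuinely uses the geometry is that $G^\beta$-semistability a priori refers to orbit closures inside $X^\beta$, which could be strictly smaller than closures in $X$, and this is exactly what closedness of $X^\beta$ in $X$ rules out. Everything else is the combination of Proposition \ref{gradient-restriction} with the inclusion $G^\beta\subseteq G$.
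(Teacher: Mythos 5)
Your proof is correct and follows essentially the same route as the paper's: reduce to the $G^\beta$-semistable case by the nesting of the three conditions, use Proposition \ref{gradient-restriction} to identify $\mu_{\liep^\beta}$ with $\mup$ on $X^\beta$, and conclude via the inclusion $G^\beta\cdot x\subseteq G\cdot x$. The only difference is cosmetic: the paper phrases the argument with a sequence $g_n\in G^\beta$ such that $\mup(g_nx)\to 0$, while you work directly with a point of $\overline{G^\beta\cdot x}\cap\mu_{\liep^\beta}^{-1}(0)$ and explicitly note that $X^\beta$ is closed in $X$ so the two notions of closure agree --- a detail the paper leaves implicit.
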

\begin{proof}
Let $x\in X^\beta$. Assume  $\overline{G^{\beta}\cdot x}\cap \mu_{\liep^\beta}^{-1}(0) \neq \emptyset$.  Then there exists a sequence $g_n \in G^\beta$ such that $\mup(g_nx) \mapsto 0$. By Proposition \ref{gradient-restriction} it follows $\mup(g_n x)=\mu_{\mathfrak p^\beta} (g_n x) \mapsto 0$ and so the result follows.
\end{proof}
\begin{theorem}\label{semistable}
 Let $x\in X$. The following conditions are equivalent:
\begin{itemize}
\item[(1)]  $x$ is semistable.
\item[(2)]  $\mathrm{Inf}_G \parallel \mu(gx) \parallel=0$.
\item[(3)]  $x$ is analytically semistable.
\end{itemize}
\end{theorem}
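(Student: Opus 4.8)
The plan is to prove the cycle of implications $(1)\Rightarrow(2)\Rightarrow(3)\Rightarrow(1)$, since $(1)\Leftrightarrow(2)$ is essentially the definition unwound (semistability means $\overline{G\cdot x}\cap\mup^{-1}(0)\neq\emptyset$, which by $K$-equivariance and properness of the norm is exactly $\mathrm{Inf}_G|\mup(gx)|=0$; note that $|\mu(gx)|$ and $|\mup(gx)|$ differ only by the projection onto $\liep$, and the equivalence with $\mathrm{Inf}_G|\mu(gx)|=0$ uses that the $\liu^\C$-component orthogonal to $\liep$ is controlled along $G$-orbits, which follows from the fact that $\mu$ is $U$-equivariant and $G=K\exp(\liep)$). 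The implication $(2)\Rightarrow(3)$ is immediate from the Moment-Weight Inequality, Theorem \ref{GMWI}: if $\mathrm{Inf}_G|\mup(gx)|=0$ then for every $\beta\in\liep\setminus\{0\}$ we get $-\la(x,\beta)/|\beta|\leq\inf_g|\mup(gx)|=0$, hence $\la(x,\beta)\geq0$; this is exactly Theorem \ref{symplectic-Analytic} run in reverse, and for $\beta=0$ the inequality $\la(x,0)\geq0$ is trivial since $\la(x,0,t)\equiv0$.

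The substantive implication is $(3)\Rightarrow(1)$: I would show that analytic semistability forces the gradient flow of $f=\demi|\mup|^2$ to reach, in the limit, a point whose $G$-orbit meets $\mup^{-1}(0)$. The key device is Lemma \ref{criterio1} together with an induction on $\dim\liep$ (or on $\dim G$). If $\la(x,\beta)>0$ for \emph{all} $\beta\in\liep\setminus\{0\}$, then $x$ is stable by Theorem \ref{stable}, hence polystable, hence semistable, and we are done. Otherwise there is some $\beta\neq0$ with $\la(x,\beta)=0$; by analytic semistability and Proposition \ref{energy-complete} (using that $\la(x,\beta)\leq0$ gives $E(c_x^\beta)<\infty$, and here we invoke energy completeness), the limit $y=\lim_{t\to+\infty}\exp(t\beta)x$ exists, lies in $X^\beta$, and satisfies $\langle\mup(y),\beta\rangle=\la(x,\beta)=0$. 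By Lemma \ref{criterio1}, $\la(y,\alpha)\geq0$ for all $\alpha\in\liep^\beta$, i.e. $y$ is \emph{analytically $G^\beta$-semistable} with respect to the $G^\beta$-gradient map $\mu_{\liep^\beta}=(\mup)|_{X^\beta}$ (Proposition \ref{gradient-restriction}). Since $G^\beta$ is a compatible subgroup with $\liep^\beta\subsetneq\liep$ strictly smaller (as $\beta$ is central in $\lieg^\beta$ but not in $\lieg$—more precisely $\beta\notin\liep^{\liep}$ unless $\liep$ is already abelian and $\beta$ central, a degenerate case handled separately), the inductive hypothesis applied to the $G^\beta$-action on $X^\beta$ gives that $y$ is $G^\beta$-semistable. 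Then Lemma \ref{criterio2} yields that $y$, hence $x$ (since $y\in\overline{G\cdot x}$), is $G$-semistable.

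The main obstacle I anticipate is twofold. First, making the induction bottom out cleanly: I need to check that passing from $\liep$ to $\liep^\beta$ strictly decreases the relevant dimension, and to isolate the base case where $\liep$ is abelian (so $\liep^\beta=\liep$ for central $\beta$); in that situation one argues directly, e.g. using that the gradient flow of $f$ converges and the convexity/linear-properness machinery of Lemma \ref{kempfNess properness} forces the limit into $\mup^{-1}(0)$ after translating by a suitable element of $G^\beta=G$. Second, the careful bookkeeping in the reduction $y\in\overline{G\cdot x}$: since $y=\lim\exp(t\beta)x$ and $\exp(t\beta)\in G$, we indeed have $y\in\overline{G\cdot x}$, so $\overline{G\cdot y}\subseteq\overline{G\cdot x}$, and $\overline{G^\beta\cdot y}\cap\mu_{\liep^\beta}^{-1}(0)\neq\emptyset$ gives via Proposition \ref{gradient-restriction} that $\overline{G\cdot x}\cap\mup^{-1}(0)\neq\emptyset$. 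One should also double-check that the equivalence $(1)\Leftrightarrow(2)$ with the \emph{full} momentum map norm $|\mu(gx)|$ (as opposed to $|\mup(gx)|$) is legitimate; this is where the orthogonal splitting $\liu^\C=\lieg\oplus\lieg^\perp$ from the proof of Proposition \ref{gradient-restriction} and $U$-equivariance of $\mu$ do the work.
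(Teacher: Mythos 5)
Your overall architecture coincides with the paper's: the cycle $(1)\Rightarrow(2)\Rightarrow(3)\Rightarrow(1)$, with $(2)\Rightarrow(3)$ coming from the Moment--Weight Inequality (Theorem \ref{GMWI} / Theorem \ref{symplectic-Analytic}), and $(3)\Rightarrow(1)$ driven by taking the limit $y=\lim_{t\to+\infty}\exp(t\beta)x$ along a direction with $\la(x,\beta)=0$ (via energy completeness and Proposition \ref{energy-complete}) and then reducing to $G^\beta$ acting on $X^\beta$ through Lemmas \ref{criterio1} and \ref{criterio2}. (Your worry about $|\mu|$ versus $|\mup|$ in condition $(2)$ is a reasonable reading of a typo --- the paper clearly intends $\mup$, cf.\ Theorem \ref{symplectic-Analytic} --- but your proposed justification via ``control of the $\lieg^\perp$-component along $G$-orbits'' is not an argument and is not needed.)

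The genuine gap is in how you organize the reduction in $(3)\Rightarrow(1)$. You induct on $\dim\liep$ via $\liep\mapsto\liep^\beta$, and you relegate the case $\liep^\beta=\liep$ to a ``degenerate case handled separately'' with only a sketch. But this case is not degenerate: it is forced at the very next stage of your own recursion. Indeed $\beta\in\liep^\beta$, $\beta$ is central in $\lieg^\beta$, and $\la(y,\beta)=\langle\mup(y),\beta\rangle=\la(x,\beta)=0$; so when you apply the inductive hypothesis to $(G^\beta,X^\beta,y)$, the point $y$ is never $G^\beta$-stable, the ``otherwise'' branch is always taken, $\beta$ itself is an admissible choice of direction with vanishing maximal weight, and $(\liep^\beta)^\beta=\liep^\beta$ --- the induction stalls. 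The paper's proof avoids exactly this by splitting $\liep^\beta=\spam(\beta)\oplus\liep'$ with $\liep'$ a $K^\beta$-invariant complement, observing that $\Phi(y,\exp(t\beta))\equiv 0$ because $\exp(t\beta)y=y$, so that both the linear-properness argument (case (a), which produces $g\in(G^\beta)^o$ with $\mu_{\liep^\beta}(gy)=0$, not merely semistability) and the dichotomy can be run on $\liep'$ alone; any new direction $\beta_1$ is then taken in $\liep'$, hence linearly independent of and commuting with $\beta$, so the accumulated span $\spam(\beta,\beta_1,\dots)$ is an abelian subalgebra of $\liep$ that grows strictly at each step, and the process terminates after at most $\dim\liea$ iterations for $\liea\subset\liep$ maximal abelian. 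Without this splitting-off of the already-used directions, your induction is not well-founded; with it, your argument becomes the paper's.
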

\begin{proof}
$(1) \Rightarrow (2)$ is obvious. \\
$(2) \Rightarrow (3)$ follows by Theorem \ref{symplectic-Analytic}. \\
$(3) \Rightarrow (1)$. If $\la(x,\beta) > 0$ for any $\beta \in \liep\backslash\{0\}$, then by Theorem \ref{stable}, $x$  is stable and hence semistable. Assume there exists
$\beta\in\mathfrak{p}\backslash\{0\}$  such that $\la(x,\beta) = 0.$ By Proposition \ref{energy-complete}
$$\lim_{t\to +\infty}\exp(t\beta)x = y,$$ $y\in \overline{G\cdot x}$ and $\beta_X(y) = 0$.

Let $X^\beta = \{z\in X : \beta_X(z) = 0\}.$ $X^\beta$ is disjoint, union of closed submanifold of $X.$ By Proposition \ref{invariance-critical points}, $G^\beta$  preserves $X^\beta$. By Lemma \ref{criterio2} if $y$ is $(G^{\beta})^o$ stable, then $y$ is semistable and so $x$ is semistable as well.

Let $Y$ be the connected component of $X^\beta$ containing $y$. Now, $\mathfrak{g}^\beta = \mathfrak{k}^\beta \oplus \mathfrak{p}^\beta$ and $K^\beta$ preserves $\liep^\beta$. Since $K^\beta \cdot \beta = \beta$, it follows that we can write $\mathfrak{p}^\beta = \text{span}(\beta) \oplus \mathfrak{p}^{'}$, where $\mathfrak{p}^{'}$ is a $K^\beta$-invariant subspace of $\liep^\beta$. Now, keeping in mind that $
(G^\beta)^o =Z((G^\beta)^o)^o (G^\beta)^o_{ss}$, $Z((G^\beta)^o)^o$ is compatible and $\exp(t\beta) \in Z((G^{\beta})^o)^o$, it follows that
\[
(G^{\beta})^o=\exp(\R\beta) H,
\]
where $H$ is a closed, connected and compatible Lie group of $(G^\beta)^o$ with Lie algebra $\mathfrak{h} = \mathfrak{k}^\beta \oplus \mathfrak{p}^{'}.$ In particular,
\[
H=(K^\beta)^o \exp(\liep')
\]
and $\mathfrak{g}^\beta = \text{span}(\beta) \oplus \mathfrak{h}.$ Consider the $H$-action on $Y.$ By Lemma \ref{criterio1}, $\lambda(y,\beta'')\geq 0$ for every $\beta''\in \liep^\beta$. We separate the two cases:
\begin{enumerate}
    \item $\la(y,\beta') > 0, \forall \beta' \in \liep'\backslash\{0\}.$
    \item There exists $\beta_1 \in \liep'$ such that $\la(y,\beta_1) = 0.$
\end{enumerate}
Assume (a) holds.  We claim that $y$ is stable with respect to $(G^\beta)^\circ$.

Let $\Phi_y : \mathfrak{p}^\beta \to \mathbb{R};$ $\xi \mapsto \Phi(y, \exp(\xi))$ be the associated Kempf-Ness function. By Lemma \ref{kempfNess properness}, $\Phi_y$ is linearly proper on $\mathfrak{p}^{'}$. This implies that $\Phi(y, \exp(.))$ is bounded from below on $\mathfrak{p}^{'}$. Let
$$
m = \text{inf}_{\xi \in \mathfrak{p}^{'}}\Phi(y, \exp(\xi)).
$$
We claim that
\begin{equation}\label{claim1}
m = \text{inf}_{\xi \in \mathfrak{p}^{\beta}}\Phi(y, \exp(\xi)).
\end{equation} Indeed, $\xi\in \liep^\beta$ can be written as $\xi = \xi_1 + \xi_2;$ $\xi_1 \in \spam(\beta),$ $\xi_2 \in \mathfrak{p}^{'},$ $[\xi_1, \xi_2] = 0.$ By the cocycle condition of the Kempf-Ness function, keeping in mind that $\exp(\xi_1)y=y$, we have
$$
\Phi(y, \exp(\xi)) = \Phi(y, \exp(\xi_2 + \xi_1)) = \Phi(y, \exp(\xi_2)\exp(\xi_1)) = \Phi(y, \exp(\xi_1)) + \Phi(y, \exp(\xi_2)).
$$
We claim that $\Phi(y, \exp(\xi_1))=0$. Indeed, let $s(t)=\Phi(y,\exp(t\beta))$. Applying again the cocycle condition, keeping in mind that $\exp(t\beta)y=y$, one can check that $s(t)$ is a linear function. Therefore $s(t)=at$, for some $a\in \R$.  On the other hand
\[
0=\lambda(x,\beta)=\langle \mup(y),\beta\rangle=\lambda(y,\beta)=\lim_{t \mapsto +\infty} \desudt \Phi(y,\exp(t\beta))=a.
\]
Therefore,  $$
\Phi(y, \exp(\xi)) = \Phi(y,\exp(\xi_2)).
$$ This proves (\ref{claim1}). This means $\Phi_y$ has a critical point. By Proposition \ref{polystable-I}, there exists $g\in (G^\beta)^\circ $ such that $\mu_{\mathfrak{p}^\beta}(gy)=\mup(gy) = 0.$ Hence,
$$
\lim_{t\to +\infty}\exp(t\beta)gx =g\lim_{t\to +\infty}\exp(t\beta)x=gy   \in \overline{G\cdot x}\cap \mu_\mathfrak{p}^{-1}(0).
$$

Suppose (b) holds. Let $\beta_1 \in \liep'\backslash\{0\}$ be such that $\la(y, \beta_1) = 0$. Since $\liep^{\beta}=\mathrm{span}(\beta)\oplus \liep'$, it follows that $[\beta,\beta_1]=0$ and $\lia_1:=\mathrm{span}(\beta,\beta_1)$ has dimension $2$. By energy completeness,
$$
\lim_{y\to +\infty}\exp(t\beta_1) y = y_1\in Y
$$ exists, $(\beta_1)_X(y_1) = 0$ and $y_1\in \overline{G\cdot y}.$ Let $Y_1$ be the connected component of $Y^{\beta_1}$ containing $y_1$. We may split $\liep'=\mathrm{span}(\beta_1)\oplus \liep''$ as
$(K^{\lia_1})$-modules. As before, $H_{\beta_1} = (K^{\lia_1})^o\exp(\liep'')$ is a compatible Lie subgroup of $G^{\beta}$ with Lie algebra $\mathfrak{h}_{\beta_1} = \liek^{\lia_1} \oplus \liep''$. The $H_{\beta_1}$-action on $X$ preserves $Y_1$. Hence, one can  repeat the above procedure for the $H_{\beta_1}$-action on $Y_1$. On the other hand, if $\liea\subset \liep$ is a maximal Abelian subalgebra, then dim$(\liea)$ is an invariant of the $K$-action on $\liep$ \cite{dadok,knapp-beyond}. This means that the above procedure will iterate at most dim$(\liea)$. This shows that $\overline{G\cdot x}\cap \mu_\liep^{-1}(0) \neq \emptyset.$
\end{proof}
\begin{corollary}\label{hmss}
Let  $x\in X$. Then $x$ is semistable if and only if there exists $\xi \in \liep$ and $g\in (G^{\xi})^o$ such that $\lim_{t\mapsto +\infty} \exp(t\xi) gx \in \mup^{-1}(0)$.
\end{corollary}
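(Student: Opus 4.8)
The plan is to read the statement off from Theorem~\ref{semistable} and, for the nontrivial direction, from the construction carried out inside its proof.

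\emph{Sufficiency.} Suppose there are $\xi\in\liep$ and $g\in(G^\xi)^\circ$ such that $y:=\lim_{t\to+\infty}\exp(t\xi)gx$ exists and lies in $\mup^{-1}(0)$. Every point $\exp(t\xi)gx$ lies on $G\cdot x$, so $y\in\overline{G\cdot x}\cap\mup^{-1}(0)$, and hence $x$ is semistable. (This uses nothing about $g$ beyond $g\in G$.)

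\emph{Necessity.} Assume $x$ is semistable, hence analytically semistable by Theorem~\ref{semistable}, and follow the proof of that theorem. If $\lambda(x,\beta)>0$ for every $\beta\in\liep\setminus\{0\}$, then $x$ is stable (Theorem~\ref{stable}), hence polystable, so there is $g_0\in G$ with $\mup(g_0x)=0$; writing $g_0=k_0\exp(\eta_0)$ with $k_0\in K$, $\eta_0\in\liep$ and using $K$-equivariance of $\mup$ gives $\mup(\exp(\eta_0)x)=0$, and since $\exp(\liep)\subset G^\circ$ we may take $\xi=0$ (so $G^\xi=G$) and $g=\exp(\eta_0)$. If instead $\lambda(x,\beta)=0$ for some $\beta\neq 0$, the iterative argument in the proof of Theorem~\ref{semistable} produces pairwise commuting elements $\beta=\beta_0,\beta_1,\dots,\beta_m\in\liep$, each $\beta_{j+1}$ lying in a subalgebra centralizing $\beta_0,\dots,\beta_j$; points $y_0=\lim_{t\to+\infty}\exp(t\beta_0)x$ and $y_{j+1}=\lim_{t\to+\infty}\exp(t\beta_{j+1})y_j$, all limits existing by energy completeness and Proposition~\ref{energy-complete}; and, since after finitely many steps one lands in case (a), an element $g$ contained in a connected compatible subgroup of $G$ centralizing $\beta_0,\dots,\beta_m$ with $\mu_{\liep^{\{\beta_0,\dots,\beta_m\}}}(g\,y_m)=0$, which by repeated use of Proposition~\ref{gradient-restriction} means $\mup(g\,y_m)=0$. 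Iterating Proposition~\ref{linearization2} (whose commutativity hypotheses hold at each stage), for small enough $\epsilon_1,\dots,\epsilon_m>0$ the element $\xi:=\beta_0+\epsilon_1\beta_1+\cdots+\epsilon_m\beta_m\in\liep$ satisfies $\lim_{t\to+\infty}\exp(t\xi)x=y_m$. Since $\liea_0:=\spam(\beta_0,\dots,\beta_m)$ is Abelian and $\xi\in\liea_0$, one has $G^{\liea_0}\subset G^\xi$, so $g\in(G^\xi)^\circ$ and $g$ commutes with $\exp(t\xi)$; therefore $\lim_{t\to+\infty}\exp(t\xi)gx=g\lim_{t\to+\infty}\exp(t\xi)x=g\,y_m\in\mup^{-1}(0)$, as required.

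The only real work is the bookkeeping in the last case: one must check that the directions produced by the proof of Theorem~\ref{semistable} pairwise commute (so that $\liea_0$ is Abelian and Proposition~\ref{linearization2} applies at each step), that the subgroup in which $g$ lives centralizes every $\beta_j$ (so that $g\in(G^\xi)^\circ$ and commutes with $\exp(t\xi)$), and that the parameters $\epsilon_j$ can be shrunk simultaneously to satisfy all the successive hypotheses of Proposition~\ref{linearization2}. This just amounts to tracking the nested centralizers $\liep^{\beta_0}\supset\liep^{\{\beta_0,\beta_1\}}\supset\cdots$ that already appear in that proof, so no new idea is needed.
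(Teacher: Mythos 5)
Your proof is correct and follows exactly the route the paper intends: the corollary is meant to be read off from the proof of Theorem~\ref{semistable}, with Proposition~\ref{linearization2} (iterated over the nested centralizers) collapsing the finitely many commuting directions $\beta_0,\dots,\beta_m$ into a single $\xi$, and the final $g$ landing in $(G^{\xi})^o$ so that it commutes with $\exp(t\xi)$. The bookkeeping you describe, including the use of Proposition~\ref{gradient-restriction} to upgrade the vanishing of the restricted gradient map to $\mup(gy_m)=0$, is precisely what is needed and is carried out correctly.
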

We now consider the polystable condition.

Let $\nu \in \lieg$. We may split $\nu=\nu_\liek + \nu_\liep \in \liek \oplus \liep$. The following Lemma is proved in \cite{Salamon} for $G=U^\C$.
\begin{lemma}\label{tecnico}
Let $x\in X$ and let $\beta \in \liep$. If $\beta_X(x)=0$, then 
\[
\langle \mup(x),\beta\rangle =\langle \mup(gx), (\mathrm{Ad}(g)(\beta))_\liep \rangle,
\]
for any $g\in G$.
\end{lemma}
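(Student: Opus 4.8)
The plan is to reduce the identity to the one-parameter case and exploit the cocycle structure of the Kempf--Ness function, very much in the spirit of Lemma~\ref{differential} and the computation in Lemma~\ref{function}. Write $g=k\exp(\eta)$ with $k\in K$ and $\eta\in\liep$; since $\mup$ is $K$-equivariant and $\mathrm{Ad}(K)$ preserves the inner product on $\liep$ and commutes with the projection onto $\liep$, it suffices to treat the case $g=\exp(\eta)$ with $\eta\in\liep$, and then replace $\eta$ by $t\eta$ and run over $t\in\R$. So the real content is the function
\[
h(t):=\langle \mup(\exp(t\eta)x),(\mathrm{Ad}(\exp(t\eta))(\beta))_\liep\rangle,
\]
and I want to show $h(t)\equiv h(0)=\langle\mup(x),\beta\rangle$ for all $t$, using the hypothesis $\beta_X(x)=0$.

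First I would record the infinitesimal consequence of $\beta_X(x)=0$: by Corollary~\ref{lem}, $\exp(s\beta)x=x$ for all $s$, so $\beta\in\liep_x$ and $\beta\in\lieg_x$. Next, differentiate $h$ at $t=0$. The derivative has two terms: one coming from $\tfrac{d}{dt}\big|_0\mup(\exp(t\eta)x)$ paired with $\beta_\liep=\beta$, and one coming from $\tfrac{d}{dt}\big|_0(\mathrm{Ad}(\exp(t\eta))(\beta))_\liep=([\eta,\beta])_\liep$ paired with $\mup(x)$. For the first term, $\tfrac{d}{dt}\big|_0\mup(\exp(t\eta)x)=d(\mup)_x(\eta_X(x))$, and since $\mathrm{grad}\,\mup^\beta=\beta_X$ we get $\langle d(\mup)_x(\eta_X(x)),\beta\rangle=(\eta_X(x),\beta_X(x))=0$ because $\beta_X(x)=0$. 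The point now is that this argument works not just at $t=0$ but along the whole curve once one knows $\beta$ stays in the stabilizer: indeed $\exp(t\eta)x$ has the property that $\beta':=\mathrm{Ad}(\exp(t\eta))(\beta)$ kills it, i.e.\ $\beta'_X(\exp(t\eta)x)=(\mathrm d\exp(t\eta))_x(\beta_X(x))=0$. So I would instead argue directly: for every $t$, setting $y_t:=\exp(t\eta)x$ and $\beta_t:=\mathrm{Ad}(\exp(t\eta))(\beta)$, we have $(\beta_t)_X(y_t)=0$, hence by Lemma~\ref{tecnico}'s target identity applied infinitesimally — or more cleanly, by differentiating $s\mapsto\langle\mup(\exp(s\eta)y_t),(\mathrm{Ad}(\exp(s\eta))(\beta_t))_\liep\rangle$ at $s=0$ and getting zero by exactly the computation above (the Ad-derivative term gives $\langle\mup(y_t),([\eta,\beta_t])_\liep\rangle$, which I must show vanishes) — one concludes $h'(t)=0$.

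The main obstacle is the $\mathrm{Ad}$-derivative term: showing $\langle\mup(y_t),([\eta,\beta_t])_\liep\rangle=0$ given $(\beta_t)_X(y_t)=0$. I expect to handle this by the same $B$-bilinear-form bookkeeping used in the proof of Proposition~\ref{gradient-restriction}: from $(\beta_t)_X(y_t)=0$ one gets, via Corollary~\ref{lem} and $U$-equivariance of $\mu$ as in that proof, that $\mup(y_t)\in\liep^{\beta_t}$, i.e.\ $[\mup(y_t),\beta_t]=0$; then $\langle\mup(y_t),[\eta,\beta_t]\rangle=\langle[\beta_t,\mup(y_t)],\eta\rangle=0$ by $\mathrm{Ad}$-invariance of the inner product on $\liep$ (note $([\eta,\beta_t])_\liep$ may be replaced by $[\eta,\beta_t]$ inside the pairing with $\mup(y_t)\in\liep$ only after projecting, so I would pair against $i\mu$ on $\liu^\C$ and use the $\mathrm{Ad}(U^\C)$-invariance of $B$ exactly as in Proposition~\ref{gradient-restriction} to move the bracket). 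Once $h'\equiv0$ on $\R$, we get $h(t)=h(0)$; specializing the reparametrization $\eta\rightsquigarrow t\eta$ at $t=1$ and restoring the $K$-factor $k$ gives the claim for arbitrary $g\in G$. I would also double-check the degenerate possibilities ($\beta=0$, or $\eta$ and $\beta$ generating a non-abelian subalgebra) cause no trouble, since nothing in the argument used commutativity of $\eta$ and $\beta$ — only $\beta_X(x)=0$.
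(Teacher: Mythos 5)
Your overall skeleton --- reduce to $g=\exp(\eta)$ with $\eta\in\liep$ and show that $h(t)=\langle \mup(\exp(t\eta)x),(\mathrm{Ad}(\exp(t\eta))(\beta))_\liep\rangle$ has vanishing derivative --- is exactly the paper's, but the way you kill $h'(t)$ has a genuine gap: you argue that the two terms of $h'(t)$ vanish \emph{separately}. That is true at $t=0$, but only because $\beta_0=\beta$ lies in $\liep$, so that $(\beta_0)_\liep=\beta$ has vanishing vector field at $x$ and $([\eta,\beta])_\liep=0$ (a bracket of two elements of $\liep$ lies in $\liek$). For $t\neq 0$ the element $\beta_t=\mathrm{Ad}(\exp(t\eta))(\beta)$ acquires a nontrivial $\liek$-component, and then: (i) the hypothesis $(\beta_t)_X(y_t)=0$ does \emph{not} give $((\beta_t)_\liep)_X(y_t)=0$ --- only the sum $((\beta_t)_\liek)_X(y_t)+((\beta_t)_\liep)_X(y_t)$ vanishes --- so your first term $\bigl(\eta_X(y_t),((\beta_t)_\liep)_X(y_t)\bigr)$ need not be zero; and (ii) your claimed relation $[\mup(y_t),\beta_t]=0$ is false in general: the equivariance argument of Proposition \ref{gradient-restriction} uses that $i\beta\in\liu$ when $\beta\in\liep$, and it breaks down because $i\beta_t\notin\liu$ once $\beta_t$ has a $\liek$-part (and a zero of $(\beta_t)_Z$ need not be a zero of $((\beta_t)_\liek)_Z$ and $((\beta_t)_\liep)_Z$ individually). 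A concrete check: $G=U^\C=\mathrm{SL}(2,\C)$ acting on $\PP^1$, $x=[1:0]$, $\beta=\diag(1,-1)$, $\eta$ the symmetric off-diagonal matrix; a direct computation gives $[\mup(y_t),\beta_t]\neq0$ for $t\neq 0$, and neither term of $h'(t)$ vanishes on its own.

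What actually happens is that the two terms cancel, and seeing this requires the step your proposal is missing: rewrite the first term as $\om\bigl(((\beta_t)_\liep)_X(y_t),J\eta_X(y_t)\bigr)$ using $\mathrm{grad}\,\mup^\gamma=\gamma_X$, and rewrite the second term $\langle\mup(y_t),[\eta,(\beta_t)_\liek]\rangle$ as $\om\bigl(((\beta_t)_\liek)_X(y_t),J\eta_X(y_t)\bigr)$ by differentiating the $U$-equivariance of $\mu$ in a direction of $\liek\oplus i\liep\subset\liu$. Their sum is $\om\bigl((\beta_t)_X(y_t),J\eta_X(y_t)\bigr)$, which vanishes precisely because the \emph{full} element $\beta_t$ annihilates $y_t$. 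This symplectic bookkeeping is the content of the paper's proof and cannot be replaced by the commutator identity you propose.
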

\begin{proof}
If $g=k$, then the result follows from the $K$-equivariance property of the gradient map.  Hence we may assume
$g=\exp(\nu)$. Let $g(t)=\exp(t\nu)$ and let $x(t)=g(t)x$. Let $\scalo$ denote the real part of the fixed $\mathrm{Ad}(U^\C)$-invariant inner product of Euclidian type on $\liu^\C$. We claim that
\[
f(t)=\langle \mup(g(t)x), (\mathrm{Ad}(g(t))(\beta))_\liep \rangle,
\]
is constant. Let $\xi(t)=\mathrm{Ad}(g(t))(\beta)$. Then
\[
\dot{\xi}(t)=[\nu,\xi(t)],
\]
and so $\dot{\xi_\liep}(t)=[\nu,\xi_\liek (t)]$. Therefore
\[
\dot{ f} (t)=\langle (\mathrm d \mup)_{x(t)} (\nu_X (x(t)), \xi(t)_\liep \rangle + \langle \mup (x(t)), [\nu,\xi_\liek (t)]\rangle.
\]
The first term is given by
\[
\begin{split}
\mathrm d \mup^{\xi(t)_\liep} (\nu_X (x(t) )&=\mathrm d \mu^{-i\xi(t)_{\liep}} (\nu_X (x(t)) ) \\ &=\omega \big(-J \left ((\xi_{\liep} )_X (x(t)) \right), \nu_X (x(t)) \big)\\  &=\omega \big( ( \xi_{\liep} )_X (x(t)) ), J (\nu_X (x(t))) \big).
\end{split}
\]
The second term is given by
\[
\begin{split}
\langle \mup (x(t)), [\nu,\xi_\liek (t)]\rangle&=\langle i\mu(x(t)), [\nu,\xi_\liek (t)]\rangle \\
&=-\langle \mu(x(t)), [-i\nu,\xi_\liek (t)]\rangle.
\end{split}
\]
Now, $\xi_{\liek}-i\nu \in \liek \oplus i\liep \subset \liu$. Using the $U$-equivariant property of the momentum map, keeping in mind that we think the momentum map as $\liu$-valued map by means of the $\mathrm{Ad}(U)$-invariant scalar product $-\scalo$ on $\liu$,  we have
\[
\begin{split}
-\langle \mu(x(t)), [-i\nu,\xi_\liek (t)]\rangle
&=-{\dfrac {\mathrm {d} }{\mathrm {ds}}}\bigg \vert _{s=0} \langle \mu(x(t)),\mathrm{Ad}(\exp(-si\nu))(\xi_\liek(t)) \rangle \\
&=-{\dfrac {\mathrm {d} }{\mathrm {ds}}}\bigg \vert _{s=0} \langle \mu(\mathrm{Ad}(\exp(s i\nu))x(t)),\xi_\liek (t)) \rangle\\
&=\omega \big( (\xi_\liek )_X (x(t)), J\nu_X (x(t)) \big).
\end{split}
\]
Therefore, keeping in mind that $\xi_X (x(t))=(\mathrm d g(t) )_{x} (\beta_X (x))=0$, we get
\[
\begin{split}
\dot{f}(t)&=\omega \big( (\xi_{\liep} )_X (x(t)), J (\nu_X (x(t))) \big)
+ \omega \big( (\xi_\liek)_X (x(t)), J (\nu_X (x(t) ) \big) \\
&= \omega( \left ((\xi_X (x(t)) \right), J (\nu_X (x(t))) \big) \\
&=0.
\end{split}
\]
This implies $f(1)=f(0)$ and the result follows.
\end{proof}
\begin{corollary}\label{polystable1}
Let $x\in X$ be polystable. Then $\lambda(x,\beta)\geq 0$ for any $\beta \in \liep$. Moreover, $\lambda(x,\beta)=0$ if and only if $\lim_{t\mapsto+\infty} \exp(t\xi) \in G\cdot x$.
\end{corollary}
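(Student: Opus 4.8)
The plan is to establish the two assertions separately: first the inequality $\lambda(x,\beta)\ge 0$, then the equality case, in both places exploiting polystability in the concrete form that there is $g\in G$ with $\mup(gx)=0$.

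For the inequality I would put $y=gx$, so $\mup(y)=0$ and $x=g^{-1}y$, fix $\beta\in\liep$, and use Proposition \ref{parabolic-decomposition} to write $g^{-1}=kh$ with $k\in K$ and $h\in G^{\beta-}$. Proposition \ref{maximal-weight-equivariance} then gives $\lambda(x,\beta)=\lambda(g^{-1}y,\beta)=\lambda(y,\Ad(k^{-1})(\beta))$, and since $\mup(y)=0$ the right-hand side is $\ge 0$ by Lemma \ref{stable-polystable}. This settles the first claim.

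For the equality case the easy implication is that if $z:=\lim_{t\to+\infty}\exp(t\beta)x$ exists and lies in $G\cdot x$, then $\lambda(x,\beta)=0$. Here continuity of $\mup$ gives $\lambda(x,\beta)=\lim_{t\to+\infty}\langle\mup(\exp(t\beta)x),\beta\rangle=\langle\mup(z),\beta\rangle$, which is finite; applying $\exp(s\beta)$ to the limit defining $z$ shows $\beta_X(z)=0$, i.e. $z\in X^\beta$; and since $z$ lies in $G\cdot x$ it is again polystable, so there is $g_1\in G$ with $\mup(g_1z)=0$. Lemma \ref{tecnico}, applied at the point $z$ with this $\beta$, then yields $\langle\mup(z),\beta\rangle=\langle\mup(g_1z),(\Ad(g_1)(\beta))_\liep\rangle=0$, hence $\lambda(x,\beta)=0$.

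The remaining implication is the substantial one: $\lambda(x,\beta)=0$ must force $\lim_{t\to+\infty}\exp(t\beta)x\in G\cdot x$. Combining the inequality just proved with Proposition \ref{energy-complete} gives $E(c_x^\beta)<\infty$, and since the $G$-action is assumed energy complete the limit $z:=\lim_{t\to+\infty}\exp(t\beta)x$ exists; Proposition \ref{energy-complete} then also gives $z\in X^\beta$ and $\langle\mup(z),\beta\rangle=\lambda(x,\beta)=0$. Plainly $z\in\overline{G\cdot x}$, so what remains is to know that $\overline{G\cdot x}=G\cdot x$. This is where I expect the real content to sit: the closedness of the $G$-orbit of a polystable point, which follows from the Slice Theorem \ref{line} (see \cite{heinzner-schwarz-stoetzel}). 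Granting this, $z\in G\cdot x$ and the proof is complete. Everything else is bookkeeping with the equivariance of the maximal weight (Proposition \ref{maximal-weight-equivariance}), its behaviour at zeros of $\mup$ (Lemma \ref{stable-polystable}), the transfer identity Lemma \ref{tecnico}, and Proposition \ref{energy-complete}.
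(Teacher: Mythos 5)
Your handling of the inequality $\lambda(x,\beta)\ge 0$ and of the implication ``$\lim_{t\to+\infty}\exp(t\beta)x\in G\cdot x$ implies $\lambda(x,\beta)=0$'' is correct and essentially the paper's: reduce to a point of $\mup^{-1}(0)$ via Propositions \ref{parabolic-decomposition} and \ref{maximal-weight-equivariance}, use Lemma \ref{stable-polystable} for the sign, and kill $\langle\mup(z),\beta\rangle$ with Lemma \ref{tecnico} (you apply it at the limit point, the paper at the base point; the computation is the same).

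The gap is in the remaining implication, where you reduce everything to the claim that $\overline{G\cdot x}=G\cdot x$ for a polystable point, citing the Slice Theorem. That claim is false in this generality, and Theorem \ref{line} cannot deliver it: the slice theorem produces a $G$-invariant neighbourhood $\Omega$ of $x$ in which the orbit is closed, but says nothing about limit points of the orbit away from $x$. Concretely, let $Z=X=\PP^1$ with $U^\C=\C^*$ acting by $t\cdot[z_0:z_1]=[tz_0:z_1]$ and the Fubini--Study momentum map normalised to vanish on $|z_0|=|z_1|$. The point $x=[1:1]$ lies in $\mup^{-1}(0)$, hence is polystable, yet $\overline{G\cdot x}=\PP^1$ contains the two fixed points, so the orbit is not closed. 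Polystable orbits are closed only inside the open set of semistable points, and your limit point $z$ is not known a priori to be semistable, so even that weaker statement would not close the argument. (In the example the hypothesis $\lambda(x,\beta)=0$ fails for the offending $\beta$ --- which is exactly the information your argument throws away by passing to the orbit closure.) The paper avoids this entirely: transport the problem to $y\in G\cdot x\cap\mup^{-1}(0)$, where $\lambda(y,\alpha)=0$ together with $\mup(y)=0$ forces $\alpha_X(y)=0$ by Lemma \ref{stable-polystable}, so the flow is stationary and its limit is $y$ itself, trivially in the orbit; one then returns to $x$ via the identity $\exp(t\beta)gx=\bigl(\exp(t\beta)h\exp(-t\beta)\bigr)\,k\,\exp\bigl(t\Ad(k^{-1})\beta\bigr)x$ with $h\in G^{\beta-}$, whose first factor converges as $t\to+\infty$. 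You should replace the orbit-closedness step by this reduction, which you have in fact already set up when proving the inequality.
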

\begin{proof}
Assume that $\mup(x)=0$. By Lemma \ref{stable-polystable} $\lambda(x,\beta)\geq 0$ and $\lambda(x,\beta)=0$ if and only if $\beta_X(x)=0$ and so $\lim_{t\mapsto+\infty} \exp(t\beta)x=x$.

Assume $\lim_{t\mapsto+\infty} \exp(t\beta)x=gx$, for some $g\in G$. Then $\beta_X (gx)=0$ and
\[
\lambda(x,\beta)=\langle\mup(gx),\beta \rangle.
\]
By Lemma \ref{tecnico}, we have
\[
\lambda(x,\beta)=\langle \mup(gx),\beta \rangle=\langle \mup(x), \mathrm{Ad}(g^{-1})(\beta)_\liep \rangle=0.
\]
Let $y=gx$. Let $\beta \in \liep$. Then $g=hk$, where $h\in G^{\beta-}$ and $k\in K$. By Proposition \ref{maximal-weight-equivariance}, we have
\[
\lambda(gx,\beta)=\lambda(x,\mathrm{Ad} (k^{-1}) (\beta)) \geq 0.
\]
By the above step, $\lambda(gx,\beta)=0$ if and only if $\lim_{t\mapsto +\infty} \exp(t\mathrm{Ad}(k^{-1})(\beta)) x \in G\cdot x$.  Since
\[
\exp(t\beta)gx=\bigl(\exp(t\beta)\, h\exp(-t\beta) \bigr) k \exp(t\mathrm{Ad}(k^{-1})(\beta))x,
\]
keeping in mind that $h\in G^{\beta-}$, it follows that $\lambda(gx,\beta)=0$ if and only if $\lim_{t\mapsto +\infty} \exp(t\beta)x \in G \cdot x.$
\end{proof}
\begin{corollary}\label{polystable2}
If $x\in X$ is analytically polystable then for any $g\in G$, $gx$ is analytically polystable as well.
\end{corollary}
\begin{proof}
Let $y=gx$. Let $\beta \in \liep$. Then $g=hk$, where $h\in G^{\beta-}$ and $k\in K$. By Proposition \ref{maximal-weight-equivariance}, we have
\[
\lambda(gx,\beta)=\lambda(x,\mathrm{Ad} (k^{-1}) (\beta)) \geq 0.
\]
Therefore $\lambda(gx,\beta)=0$ if and only if $\lim_{t\mapsto +\infty} \exp(t\mathrm{Ad}(k^{-1})) x \in G\cdot x$.  Since
\[
\exp(t\beta)gx=\bigl(\exp(t\beta)\, h\exp(-t\beta) \bigr) k \exp(t\mathrm{Ad}(k^{-1})(\beta))x,
\]
we get that $\lambda(gx,\beta)=0$ if and only if $\lim_{t\mapsto +\infty} \exp(t\beta) gx \in G \cdot x.$
\end{proof}
\begin{theorem}\label{polystable}
Let $x\in X$. Then $x$ is analytically polystable if and only if $x$ is polystable.
\end{theorem}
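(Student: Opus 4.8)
The plan is to establish the two implications separately. The forward direction --- that polystability implies analytic polystability --- is exactly the content of Corollary \ref{polystable1}, so nothing new is needed there. For the converse I intend to reduce to the semistable situation already settled: an analytically polystable point is, in particular, analytically semistable, hence semistable by Theorem \ref{semistable}, and semistability is controlled by the Hilbert-type criterion of Corollary \ref{hmss}. The orbit-invariance of analytic polystability (Lemma \ref{polystable2}) will then let me transfer the conclusion along the $G$-orbit.

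Concretely, starting from an analytically polystable $x$, I would first note that $\lambda(x,\beta)\ge 0$ for all $\beta\in\liep$, so $x$ is analytically semistable and therefore semistable by Theorem \ref{semistable}. Corollary \ref{hmss} then provides $\xi\in\liep$ and $g\in(G^{\xi})^o$ with
\[
z:=\lim_{t\to+\infty}\exp(t\xi)\,gx\in\mup^{-1}(0).
\]
Setting $x':=gx$, we have $z=\lim_{t\to+\infty}\exp(t\xi)x'$, and by Lemma \ref{polystable2} the point $x'$ is again analytically polystable. Since $\mup$ is continuous and $\mup(z)=0$, the very definition of the maximal weight gives $\lambda(x',\xi)=\lim_{t\to+\infty}\langle\mup(\exp(t\xi)x'),\xi\rangle=\langle\mup(z),\xi\rangle=0$ (equivalently, the existence of the limit forces $E(c_{x'}^\xi)<\infty$, and Proposition \ref{energy-complete} yields the same formula). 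Now I would invoke the second clause in the definition of analytic polystability of $x'$: from $\lambda(x',\xi)=0$ it follows that $z=\lim_{t\to+\infty}\exp(t\xi)x'\in G\cdot x'$. As $z\in\mup^{-1}(0)$ and $G\cdot x'=G\cdot x$, this gives $G\cdot x\cap\mup^{-1}(0)\neq\emptyset$, i.e.\ $x$ is polystable.

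The substantive work has already been carried out elsewhere: the dimension induction inside Theorem \ref{semistable} (repackaged as Corollary \ref{hmss}), the orbit-invariance of analytic polystability (Lemma \ref{polystable2}), and Corollary \ref{polystable1}. Given these, the only step that needs a moment's care is verifying that the destabilizing limit supplied by semistability has maximal weight exactly $0$ when based at the translate $x'=gx$ --- which is immediate from continuity of $\mup$ together with $\mup(z)=0$ --- after which the orbit-closedness built into analytic polystability finishes the argument. A more self-contained alternative would be to rerun the proof of Theorem \ref{semistable} and check at every stage where $\lambda=0$ that the relevant limit point lies in $G\cdot x$ rather than merely in $\overline{G\cdot x}$; but appealing to Corollary \ref{hmss} avoids repeating that induction, so I would present the short version above.
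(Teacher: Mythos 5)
Your proof is correct, and the converse direction takes a genuinely different (and shorter) route than the paper's. The paper proves ``analytically polystable $\Rightarrow$ polystable'' by rerunning the whole induction from the proof of Theorem \ref{semistable}: at each stage where $\lambda=0$ it uses the orbit clause of analytic polystability to ensure the successive limit points $y, y_1,\dots$ lie in $G\cdot x$ rather than merely in $\overline{G\cdot x}$, terminating after at most $\dim\liea$ steps inside $G\cdot x\cap\mup^{-1}(0)$. You instead treat the semistable theory as a black box: analytic polystability gives analytic semistability, Theorem \ref{semistable} and Corollary \ref{hmss} produce a single $\xi$ and $g$ with $z=\lim_t\exp(t\xi)gx\in\mup^{-1}(0)$, continuity of $\mup$ forces $\lambda(gx,\xi)=0$, and then one application of the defining clause of analytic polystability at $gx$ (legitimate by Lemma \ref{polystable2}) upgrades the closure to the orbit itself. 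This buys brevity and isolates exactly where the orbit-closedness hypothesis is used, at the cost of leaning on Corollary \ref{hmss}, which the paper states without a detailed proof --- its ``only if'' direction requires packaging the iterated limits from Theorem \ref{semistable} into a single one-parameter subgroup (via Proposition \ref{linearization2}), which is essentially the same induction the paper repeats explicitly. Both arguments are valid and non-circular, since Theorem \ref{semistable}, Corollary \ref{hmss}, Lemma \ref{polystable2} and Corollary \ref{polystable1} all precede this theorem logically.
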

\begin{proof}
By Corollary \ref{polystable1}, if $x\in X$ is polystable then $x$ is analytically polystable.

Assume that $x\in X$ is analytically polystable. If $\lambda(x,\beta)>0$ for any $\beta \in \liep\backslash\{0\}$, then $x$ is stable and so the result is proved. Assume that $\lambda(x,\beta)=0$ for some $\beta \in \liep\backslash\{0\}$. Then $\lim_{t\mapsto+\infty} \exp(t\beta)x=y \in G\cdot x$ and
$\beta_X(y)=0$. By Corollary \ref{polystable2}, $y$ is analytically polystable.  By Lemma \ref{criterio2}, if $y$ is $G^\beta$-stable then $x$ is $G$-polystable. As in the proof of Theorem \ref{semistable}, we may decompose $\liep^\beta=\mathrm{span}(\beta)\oplus \liep'$ as $K^\beta$-modules and we consider the compatible subgroup $H=(K^\beta)^o \exp(\liep')$ of $G^\beta$. $H$ preserves the connected component $Y$ of $X^\beta$ containing $y$. By Lemma \ref{criterio1} $\lambda(y,\beta')\geq 0$ for any $\beta'\in \liep^\beta$.
We separate the two cases:
\begin{enumerate}
    \item $\la(y,\beta') > 0, \forall \beta \in \liep' \backslash\{0\}$
    \item There exists $\beta_1 \in \liep'$ such that $\la(y,\beta_1) = 0.$
\end{enumerate}
If $(a)$ holds then as in the previous proof, $y$ is $(G^{\beta})^o$ stable and so there exists $g\in (G^{\beta})^o$ such that $\mu_{\liep^\beta}(gx)=\mup(gx)=0$. In particular,
\[
\lim_{t\mapsto +\infty} \exp(t\beta)gx\in G\cdot x \cap \mup^{-1}(0).
\]
Otherwise, there exists $\beta_1 \in \liep' \backslash\{0\}$  such that $\lambda (y, \beta_1) = 0$. Then
$$
\lim_{y\to \infty}\exp(t\beta_1)y = y_1 \in G\cdot x,
$$
$(\beta_1)_X(y_1) = 0$ and $y_1\in G\cdot x$. By Corollary \ref{polystable2}, $y_1$ is analytically polystable. Let $Y_1$ be the connected component of $Y^{\beta_1}$ containing $y_1$. Let $\liep'=\mathrm{span}(\beta_1)\oplus \liep''$ be a splitting of $K^{\lia_1}$-modules. As in the proof of Theorem \ref{semistable}, we repeat the procedure for the action of the compatible subgroup $H_{\beta_1}=(K^{\lia_1})^o\exp(\liep'')$ on $Y_1$, where $\lia_1=\mathrm{span}(\beta,\beta_1)$. Since the dimension of a maximal Abelian subalgebra contained in $\liep$ is an invariant of the $K$-action on $\liep$ \cite{dadok,knapp-beyond}, the above procedure will iterate at most dim$(\liea)$. This shows that $G\cdot x\cap \mu_\liep^{-1}(0) \neq \emptyset.$
\end{proof}
\begin{corollary}\label{hmps}
Let $x\in X$. Then $x$ is polystable if and only if there exists $\xi \in \liep$ and $g\in (G^{\xi})^o$ such that $\lim_{t\mapsto +\infty} \exp(t\xi) gx  \in G\cdot x \cap \mup^{-1}(0) $.
\end{corollary}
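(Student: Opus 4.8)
The plan is to prove the two implications separately. The ``if'' direction is essentially definitional: if $\xi\in\liep$ and $g\in(G^\xi)^o$ are such that $z:=\lim_{t\to+\infty}\exp(t\xi)gx$ exists and lies in $G\cdot x\cap\mup^{-1}(0)$, then this set is nonempty, which is exactly the definition of polystability of $x$; so there is nothing to do here. All the content is in the converse, and there I would reduce, via the Cartan decomposition $G=K\exp(\liep)$ and the $K$-equivariance of $\mup$, to the already-established fact (Theorem \ref{polystable}, or directly the definition) that a polystable orbit meets $\mup^{-1}(0)$.

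For the ``only if'' direction I would start from polystability, which supplies $h\in G$ with $\mup(hx)=0$. Writing $h=k\exp(\eta)$ with $k\in K$, $\eta\in\liep$, the $K$-equivariance of the gradient map gives $0=\mup(hx)=\mathrm{Ad}(k)\mup(\exp(\eta)x)$, and since $\mathrm{Ad}(k)$ is an isometry of $\liep$ this forces $\mup(\exp(\eta)x)=0$. I would then take $\xi=0\in\liep$ and $g=\exp(\eta)$: here $G^\xi=G$, so $(G^\xi)^o=G^o$, and $\exp(\eta)\in\exp(\liep)\subseteq G^o$ because $\liep$ is a connected set with $\exp(0)=e$; moreover $\exp(t\xi)=e$ for every $t$, so $\lim_{t\to+\infty}\exp(t\xi)gx=gx=k^{-1}hx\in G\cdot x$ with $\mup(gx)=0$. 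Thus $\xi$ and $g$ are the required witnesses.

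I do not expect a serious obstacle for the statement as phrased; the only bookkeeping point is the connected-component condition $g\in(G^\xi)^o$, which is settled by $\exp(\liep)\subseteq G^o$. If instead one wants a genuinely nontrivial one-parameter subgroup $\xi$ --- the classical Hilbert--Mumford shape --- then the hard part would be to assemble the commuting degeneration directions $\beta,\beta_1,\beta_2,\dots\in\liep$ produced in the proof of Theorem \ref{polystable} into a single $\xi=\beta+\epsilon_1\beta_1+\epsilon_2\beta_2+\cdots$ with the $\epsilon_j>0$ chosen small, using Proposition \ref{linearization2} iteratively, and to absorb the corresponding Levi-type group elements into one $g$ in the identity component of the relevant centralizer; one would then check, as in the proof of Corollary \ref{polystable1}, that $\lim_{t\to+\infty}\exp(t\xi)gx$ is precisely the point of $G\cdot x\cap\mup^{-1}(0)$ reached by that iteration. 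In all cases the substantive input is Theorem \ref{polystable} itself, on which this corollary rests.
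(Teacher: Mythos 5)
Your argument is logically valid for the statement exactly as printed, and the ``if'' direction is indeed immediate from the definition of polystability. But be aware that your ``only if'' witness $\xi=0$ reduces the corollary to a tautology: with $\xi=0$ one has $G^{\xi}=G$, the limit is just $gx$, and the assertion collapses to ``$x$ is polystable iff $G\cdot x\cap\mup^{-1}(0)\neq\emptyset$,'' which is the definition. The corollary is stated without proof in the paper precisely because it is meant to record what the proof of Theorem \ref{polystable} actually produces, in parallel with Corollary \ref{hmss} (where the $\xi=0$ shortcut is unavailable, since a semistable orbit need not itself meet $\mup^{-1}(0)$): a single one-parameter degeneration $\exp(t\xi)$, applied to a translate $gx$ with $g$ in the identity component of the centralizer $G^{\xi}$, carries the orbit into the zero level set. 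So nothing you wrote is false, but the headline proof exploits a loophole in the phrasing rather than establishing the intended Hilbert--Mumford content.

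Your closing paragraph correctly identifies where that content lives and how to extract it: the iteration in the proof of Theorem \ref{polystable} produces commuting directions $\beta,\beta_1,\dots\in\liep$ (each new direction lying in the centralizer of the previous ones) together with limit points $y,y_1,\dots$ in $G\cdot x$ and, at the last stage, an element of the identity component of the joint centralizer moving the final point into $\mup^{-1}(0)$; Proposition \ref{linearization2}, applied iteratively, lets one replace the composite of these degenerations by a single $\xi=\beta+\epsilon_1\beta_1+\cdots$ with the $\epsilon_j>0$ small, the group elements arising at each stage are absorbed into one $g\in(G^{\xi})^o$ because they centralize all the $\beta_j$ and hence commute with $\exp(t\xi)$, and Proposition \ref{gradient-restriction} identifies the zero of the iterated restricted gradient maps with a zero of $\mup$. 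That is the proof the corollary rests on; as submitted you have only sketched it. Either carry the sketch out, or state explicitly that you are proving the literal (weaker) statement.
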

\section{Linear examples}\label{linear}
Let $Z=\mathrm{Hom}(\C^n,\C^{m+n})$. We consider the natural action of $\mathrm{GL}(n,\C)$ on $Z$: $(g,L):=L\circ g^{-1}$.
We fix the $\mathrm{Ad}(\mathrm{GL}(n,\C))$-invariant inner product of Euclidian type on $\mathfrak{gl}(n,\C)$ given by  $B(X,Y)=\mathrm{Tr}(XY)$.
The maximal compact subgroup $\mathrm{U}(n)$ of $\mathrm{GL}(n,\C)$ acts in a Hamiltonian fashion on $Z$ with momentum map
\[
\mu(L)=\frac{\textbf{i}}{2}\biggl(L^*\circ L -h\mathrm{Id}_{n}\biggr),\, h\in \R,
\]
see for instance \cite[p. 1043]{BT}. Therefore
\[
\mu^{-1}(0)/\mathrm{U}(n) =\left\{\begin{array}{ll} \mathrm{Gr}_n(\C^{m+n}) & h>0\\ \{0\} & h=0 \\ \emptyset & h<0 \end{array}\right. ,
\]
where $\mathrm{Gr}_n(\C^{m+n})$ denotes the Grassmannian of the $n$ dimensional subspaces of $\C^{n+m}$. Assume $h>0$. If $L$ is injective, then $L$ is polystable. Indeed, $L^* \circ L=P^2$, where $P$ is a positive Hermitian endomorphism of $\C^n$ and $S=L\circ P^{-1}$ satisfies $S^* \circ S=\mathrm{Id}_{\C^n}$. Therefore $g=\sqrt{h} P \in \mathrm{GL}(n,\C)$, $g\cdot L=\sqrt{h} S$ and so $\mu(\sqrt{h}S)=0$.  Since the stabilizer of $L$ is trivial, it follows that $L$ is stable. If $\mathrm{Ker}\, L \neq \{0\}$, then it is easy to check that $L$ is not semistable. Indeed, as in \cite[p. 1043]{BT}, let $\beta \in \textbf{i}\mathfrak{u}(n)$ be such that $V_s \subset \mathrm{Ker}\, L$, where
\[
V_s=\bigoplus_{\begin{tiny}\begin{array}{l}\lambda \in \mathrm{Spec}(\beta) \\ \ \ \ \ \lambda <0\end{array}\end{tiny}} V_\lambda,
\]
and $h\mathrm{Tr}(\beta)<0$. Then $\lambda(L,\beta)<0$.

$\mathrm{GL}(n,\R)\subset \mathrm{GL}(n,\C)$ is compatible. Indeed, $\mathrm{GL}(n,\R)=\mathrm{O}(n)\exp(\liep)$, where
$\mathrm{O}(n)=\mathrm{GL}(n,\R)\cap \mathrm{U}(n)$ and $\liep=\lieg \cap i\mathrm{u}(n)=\mathrm{Sym}(n)$, i.e., the set of the symmetric matrices of order $n$. $\mathrm{GL}(n,\R)$ leaves  $\mathrm{Hom}(\R^n,\R^{m+n})\subset \mathrm{Hom}(\C^n,\C^{m+n})$ invariant. The associated  $\mathrm{GL}(n,\R)$-gradient map is given by
\[
\mup(L)=\frac{1}{2}\biggl(-L^T\circ L +h\mathrm{Id}_{n}\biggr),\, h\in \R.
\]
Therefore,
\[
\mup^{-1}(0)/\mathrm{O}(n) =\left\{\begin{array}{ll} \mathrm{Gr}_n (\R^{m+n}) & h>0\\ \{0\} & h=0 \\ \emptyset & h<0 \end{array}\right. ,
\]
where $\mathrm{Gr}_n(\R^{m+n})$ denotes the Grassmannian of the $n$ dimensional subspaces of $\R^{m+n}$. If $h>0$, then it is easy to check that $L$ is stable if and only if $L$ is injective. As in the previous example, one can check that if $\mathrm{Ker}\, L \neq \{0\}$, then $L$ is not semistable.

The $\mathrm{GL}(n,\R)$-gradient map associated to the $\mathrm{GL}(n,\R)$ action on $\mathrm{Hom}(\C^n, \C^{m+n})$ is given by
\[
\mup(L)=\frac{1}{2}\biggl(-A +h\mathrm{Id}_{n}\biggr),\, h\in \R,
\]
where $A=\mathrm{Re}(L^*\circ L)$. Indeed, since $L^*\circ L$ is Hermitian, we have $L^*\circ L=A+\textbf{i}C$, where $A$ is a symmetric matrix and $C$ is anti-symmetric matrix. Since $\langle A,\textbf{i}C\rangle=0$, it follows that the orthogonal projection of $L^*\circ L$ onto $\liep$ is given by $A$. Assume that $h>0$. Let $g\in \mathrm{GL}(n,\R)$. Then
$(L\circ g^{-1})^* \circ (L\circ g^{-1})=(g^{-1})^T \circ (L^*\circ L) \circ g^{-1}=(g^{-1})^T A g^{-1}+\textbf{i}\left((g^{-1})^T C g^{-1}\right)$. This implies that $L$ is polystable if and only if  $\mathrm{Re} (L^*\circ L )$  is a positive-define symmetric matrix. If $\mathrm{Re} (L^*\circ L )$ is not injective, then one can check that $L$ is not semistable.
Indeed, let $\beta \in \liep$ be such that $V_s \subset \mathrm{Ker}\, \mathrm{Re}(L^* \circ L)$, where
\[
V_s=\bigoplus_{\begin{tiny}\begin{array}{l}\lambda \in \mathrm{Spec}(\beta) \\ \ \ \ \ \lambda <0\end{array}\end{tiny}} V_\lambda,
\]
and $h\mathrm{Tr}(\beta)<0$. Then $\lambda(L,\beta)<0$.
\section{Final Remark}\label{final}
Let $(Z,\omega)$ be a \Keler manifold and $U^\C$ acts holomorphically on $Z$ with a $U$-equivariant momentum map $\mu : Z \to \mathfrak{u}.$  The  stabilities conditions depend on the choice of a maximal compact subgroup $U$ of $U^\C$, a $U$-invariant K\"ahler metric and the momentum map $\mu$.  It is well-known that two maximal compact subgroups are conjugate. 
Let $G\subset U^\C$ be compatible. Let $g\in G$ and let $U'=gUg^{-1}$. Then $\omega_g=(g^{-1})^{\star} \omega$ is a K\"ahler form and $U'$ preserves $\omega_g$. Since $B$ is $\mathrm{Ad}(U^\C)$-invariant, $B$ restricted to $\liu'$, respectively $\textbf{i}\liu'$,  is negative-define, respectively positive-define.
\begin{lemma}
The $U'$-action on $(Z,\omega_g)$ is Hamiltonian with momentum map $\mu':Z \lra \liu'$, given by
$\mu'=\mathrm{Ad}(g)\circ \mu \circ g^{-1}.$
\end{lemma}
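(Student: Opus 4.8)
The plan is to check directly that the map $\mu'=\Ad(g)\circ\mu\circ g\meno$ (where $g\meno$ denotes the action of $g\meno$ on $Z$) is a momentum map for the $U'$-action on $(Z,\om_g)$; this amounts to two things, the differential identity $\mathrm d(\mu')^{\xi'}=i_{\xi'_Z}\om_g$ for all $\xi'\in\liu'$, and $U'$-equivariance. Since $g\in U^\C$ acts biholomorphically on $Z$, the form $\om_g=(g\meno)^{\star}\om$ is again \Keler, and the discussion preceding the statement already records that $U'$ preserves $\om_g$ and that $\Ad(g)\liu=\liu'$, so $\mu'$ genuinely takes values in $\liu'$ and only these two identities remain to be verified.

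First I would relate fundamental vector fields under conjugation: for $\xi\in\liu$ and $\xi'=\Ad(g)\xi\in\liu'$, differentiating $\exp(t\xi')z=g\exp(t\xi)g\meno z$ at $t=0$ gives $\xi'_Z=g_{\ast}\xi_Z$, the pushforward of $\xi_Z$ by the diffeomorphism $g$ of $Z$. Next, using that $-B$ is $\Ad(U^\C)$-invariant, for $\xi'=\Ad(g)\xi$ one has $(\mu')^{\xi'}=\mu^\xi\circ g\meno=(g\meno)^{\star}\mu^\xi$; taking differentials, invoking $\mathrm d\mu^\xi=i_{\xi_Z}\om$, and using the naturality relation $\phi^{\star}(i_V\alpha)=i_{(\phi\meno)_{\ast}V}(\phi^{\star}\alpha)$ with $\phi=g\meno$, $V=\xi_Z$, $\alpha=\om$, I obtain
\[
\mathrm d(\mu')^{\xi'}=(g\meno)^{\star}(i_{\xi_Z}\om)=i_{g_{\ast}\xi_Z}\big((g\meno)^{\star}\om\big)=i_{\xi'_Z}\om_g .
\]
Since $\xi'\mapsto\xi'_Z$ is $\R$-linear and every element of $\liu'$ has the form $\Ad(g)\xi$, this is the required identity. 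For equivariance, writing $u'=gug\meno$ with $u\in U$ and using the $U$-equivariance of $\mu$,
\[
\mu'(u'z)=\Ad(g)\mu(ug\meno z)=\Ad(g)\Ad(u)\mu(g\meno z)=\Ad(u')\Ad(g)\mu(g\meno z)=\Ad(u')\mu'(z).
\]

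There is no genuine obstacle here; the only thing requiring care is the bookkeeping with the pullback/pushforward conventions and with the identification $\liu\cong\liu^{\ast}$ via $-B$, so that $\Ad(g)$ correctly transports both the values and the pairing (this is exactly why the $\Ad(U^\C)$-invariance of $B$, noted just before the statement, is used). I would close by remarking that, up to the usual locally constant ambiguity already present for $\mu$, this $\mu'$ is \emph{the} momentum map for $(U',\om_g)$, and that consequently the gradient map, the Kempf--Ness function, and all the stability notions of the paper may be developed equally well starting from the triple $(U',\om_g,\mu')$ — which is precisely the invariance statement announced at the end of the introduction.
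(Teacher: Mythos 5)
Your proposal is correct and follows essentially the same route as the paper: both verify $U'$-equivariance by the same conjugation computation, and both use the $\Ad(U^\C)$-invariance of $B$ to get $(\mu')^{\Ad(g)\xi}=\mu^\xi\circ g\meno$ and then pull back $\mathrm d\mu^\xi=i_{\xi_Z}\om$ through $g\meno$ together with the relation $\xi'_Z=g_{\ast}\xi_Z$. The only difference is presentational — you invoke the naturality formula $\phi^{\star}(i_V\alpha)=i_{(\phi\meno)_{\ast}V}(\phi^{\star}\alpha)$ where the paper writes the same identity out pointwise.
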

\begin{proof}
We prove that $\mu'$ is $U'$-equivariant. Let $h\in U$. Then
\[
\begin{split}
\mu'(ghg^{-1}x)&=\mathrm{Ad}(g)(\mu(hg^{-1}x))  \\
&=\mathrm{Ad}(g)\big(\mathrm{Ad}(h) (\mu(g^{-1}x)\big)\\
&=\mathrm{Ad}(ghg^{-1}) \big(\mathrm{Ad}(g)(\mu(g^{-1}x))\big)\\
&=\mathrm{Ad}(ghg^{-1}) (\mu'(x))
\end{split}
\]
Let $\xi \in \liu$. Then
\[
\begin{split}
(\mu')^{\mathrm{Ad}(g)(\xi)} (z)&=-\langle\mu'(z), \mathrm{Ad}(g)(\xi) \rangle\\
&=-\langle \mu(g^{-1} z) , \xi \rangle,
\end{split}
\]
and so
$\mathrm d (\mu')^{\mathrm{Ad}(g)(\xi))}=\mathrm d \mu^\xi \circ \mathrm d g^{-1}$. Therefore
\[
\begin{split}
\mathrm d (\mu')_z^{\mathrm{Ad}(g)(\xi))}&=\omega(\xi_Z (g^{-1}z), \mathrm d g^{-1} \cdot) \\
&=\omega( \mathrm d g^{-1} \big( (\mathrm{Ad}(g)(\xi)_Z (z) \big), \mathrm d g^{-1} \cdot) \\
&=\omega_g ( \mathrm{Ad}(g)(\xi)_Z (z), \cdot).
\end{split}
\]
\end{proof}
Let $X$ be a $G$-invariant submanifold. Let $\mathtt g$ denote the Riemannian metric induced by the K\"ahler form $\omega$.
For any $g\in G$, we have a triple $(U',(g^{-1})^{\star} \mathtt g,\mu')$, where $U'=gUg^{-1}$. Note that $G$ is also compatible with respect to the Cartan decomposition  $U^\C=U'\exp(\liep')$.  Indeed, $G=gGg^{-1}=K' \exp(\liep')$, where $K'=gKg^{-1}=G\cap U'$ and $\liep'=\mathrm{Ad}(g)(\liep)=\lieg \cap \textbf{i}\liu'$. The associated gradient map $\mu':X \lra \liep'$ is the orthogonal projection of $i\mu'$ onto $\liep'$ with respect to $B$. One can check
\[
\mu_{\liep'}=\mathrm{Ad}(g)\circ \mup \circ g^{-1}.
\]
Following Teleman \cite{Teleman}, we say that such triples define a \emph{symplectization} of the $U^\C$-action on $Z$ with respect to $G$.  A priori the concept of energy completeness condition depends on the choice of a triple. The following result shows that the notion of energy completeness does not depend on the triple  chosen.
\begin{lemma}
If $(U,\mathtt g,\mu)$ is energy complete then $(U',(g^{-1})^{\star} \mathtt g,\mu')$ is energy complete as well.
\end{lemma}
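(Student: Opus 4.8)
The plan is to reduce energy completeness of the new triple $(K',(g^{-1})^{\star}\mathtt g,\mu_{\liep'})$ to that of $(K,\mathtt g,\mu)$ by the substitution $\beta' = \mathrm{Ad}(g)(\beta)$, $x\mapsto g^{-1}x$. Since $\liep' = \mathrm{Ad}(g)(\liep)$, every $\beta'\in\liep'$ has this form with a unique $\beta\in\liep$, and using $\exp(\mathrm{Ad}(g)(t\beta)) = g\exp(t\beta)g^{-1}$ together with the $G$-invariance of $X$ (so $g^{-1}x\in X$) we get
\[
\exp(t\beta')x = g\exp(t\beta)g^{-1}x = g\cdot c_{g^{-1}x}^{\beta}(t).
\]
Thus the trajectory $c_x^{\beta'}$ of the new one-parameter group is simply the image under the diffeomorphism $g\colon X\to X$ of the old trajectory $c_{g^{-1}x}^{\beta}$, with the same parametrization on $[0,+\infty)$. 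In particular $\lim_{t\to+\infty}\exp(t\beta')x$ exists if and only if $\lim_{t\to+\infty}\exp(t\beta)(g^{-1}x)$ exists, and in that case the two limits correspond under $g$.

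Next I would compare the energy functionals. Differentiating the displayed relation in $t$ at a fixed parameter value gives
\[
\beta'_X\big(c_x^{\beta'}(t)\big) = (\mathrm d g)_{c_{g^{-1}x}^{\beta}(t)}\big(\beta_X(c_{g^{-1}x}^{\beta}(t))\big).
\]
By definition of the pullback metric, for $v\in T_yX$ with $y = c_x^{\beta'}(t)$ one has $\|v\|^2_{(g^{-1})^{\star}\mathtt g} = \|(\mathrm d g^{-1})_y v\|^2_{\mathtt g}$; applying this to $v = \beta'_X(y)$ the maps $(\mathrm d g^{-1})_y$ and $(\mathrm d g)_{g^{-1}y}$ cancel, and since $g^{-1}y = c_{g^{-1}x}^{\beta}(t)$ we obtain
\[
\big\|\beta'_X\big(c_x^{\beta'}(t)\big)\big\|^2_{(g^{-1})^{\star}\mathtt g} = \big\|\beta_X\big(c_{g^{-1}x}^{\beta}(t)\big)\big\|^2_{\mathtt g}
\]
for every $t\geq 0$. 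Integrating over $[0,+\infty)$ shows that the energy of $c_x^{\beta'}$ computed with the metric $(g^{-1})^{\star}\mathtt g$ equals $E(c_{g^{-1}x}^{\beta})$ computed with $\mathtt g$.

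Finally, if the new energy of $c_x^{\beta'}$ is finite, then $E(c_{g^{-1}x}^{\beta})<\infty$, so energy completeness of $(K,\mathtt g,\mu)$ yields that $\lim_{t\to+\infty}\exp(t\beta)(g^{-1}x)$ exists; by the first paragraph $\lim_{t\to+\infty}\exp(t\beta')x$ then exists as well. Since $x\in X$ and $\beta'\in\liep'$ were arbitrary, this is precisely energy completeness of $(K',(g^{-1})^{\star}\mathtt g,\mu_{\liep'})$. I do not expect any serious obstacle: the argument is a change of variables, and the only place the two data genuinely interact is the energy identity, where one must keep careful track of the pullback metric and of $\mathrm d g$; once that bookkeeping is done everything else is formal. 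One should also remark that the maximal weight and the curves $c_x^{\beta'}$ themselves depend only on the intrinsic $G$-action, so it is exactly the metric-dependent quantity $E$ that needs this comparison.
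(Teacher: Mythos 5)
Your argument is correct and is essentially the paper's own proof: both rest on the identity $c_x^{\mathrm{Ad}(g)(\beta)} = g\circ c_{g^{-1}x}^{\beta}$, the equality of energies under the pullback metric, and the correspondence of limits under the diffeomorphism $g$. You merely spell out the bookkeeping (the differential of $g$ cancelling against the pullback) that the paper leaves implicit.
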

\begin{proof}
Let $\xi \in \lieg$ and let $x\in X$. We recall that $c_x^\xi$ denotes the curve $c_x^\xi(t)=\exp(t\xi)x$. Let $\xi \in \liep$. Since
\[
c_x^{\mathrm{Ad}(g)(\xi)}=g\circ c_{g^{-1}x}^{\xi},
\]
it follows that the energy of $c_x^{\mathrm{Ad}(g)(\xi)}$ with respect to $(g^{-1})^{\star} \mathtt g$ coincides with the energy of $c_{g^{-1}x}^\xi$ with respect to $\mathtt g$. Moreover, the limit
$\lim_{t\mapsto +\infty} c_x^{\mathrm{Ad}(g)(\xi)}(t)$ exists if and only if the limit $\lim_{t\mapsto +\infty} c_{g^{-1}x}^{\xi}(t)$ does.
\end{proof}
We claim that the stable, polystable and semistable conditions do not depend on the triple chosen. It is a consequence of the following easy Lemma.
\begin{lemma}
Let $x\in X$. Then $x\in \mu_{\liep'}^{-1}(0)$ if and only if $g^{-1} x\in \mup^{-1}(0)$.
\end{lemma}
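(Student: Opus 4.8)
The plan is to deduce the statement directly from the formula for $\mu_{\liep'}$ established just above, namely $\mu_{\liep'} = \mathrm{Ad}(g)\circ \mup \circ g^{-1}$ as maps $X \to \liep'$. First I would evaluate this identity at the point $x$, obtaining
\[
\mu_{\liep'}(x) = \mathrm{Ad}(g)\bigl(\mup(g^{-1}x)\bigr).
\]
The key observation is then that $\mathrm{Ad}(g)$ restricts to a linear isomorphism of $\liep$ onto $\liep' = \mathrm{Ad}(g)(\liep)$: this is precisely the way $\liep'$ was defined when checking that $G$ is compatible with the Cartan decomposition $U^\C = U'\exp(i\liu')$ (so that $G = K'\exp(\liep')$), and $\mathrm{Ad}(g)$ is invertible with inverse $\mathrm{Ad}(g^{-1})$. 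Hence, for $v \in \liep$, one has $\mathrm{Ad}(g)(v) = 0$ if and only if $v = 0$. Applying this with $v = \mup(g^{-1}x)$ yields $\mu_{\liep'}(x) = 0 \iff \mup(g^{-1}x) = 0$, which is exactly the claim.

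The argument carries no genuine obstacle; the only point deserving a line of justification is that $\mathrm{Ad}(g)$ indeed maps $\liep$ bijectively onto $\liep'$, and this is immediate from the compatibility of $G$ with both Cartan decompositions recorded earlier in this section. I would then remark that, combined with the observation that the $G$-orbit of $x$ is the same regardless of the triple and that $g$ maps $G\cdot(g^{-1}x)$ onto $G\cdot x$, this Lemma gives at once that $x$ satisfies the semistability (resp. polystability, stability) condition for the triple $(K',(g^{-1})^{\star}\mathtt g,\mu_{\liep'})$ if and only if $g^{-1}x$ does for $(K,\mathtt g,\mup)$, so that the stability notions are independent of the symplectification chosen.
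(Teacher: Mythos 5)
Your argument is correct and rests on the same facts as the paper's: the paper verifies the equivalence by pairing $\mu_{\liep'}(x)$ against $\mathrm{Ad}(g)(\xi)$ with the $\mathrm{Ad}$-invariant nondegenerate form $B$, which is just another way of saying that $\mu_{\liep'}=\mathrm{Ad}(g)\circ\mup\circ g^{-1}$ and that $\mathrm{Ad}(g)$ is a linear isomorphism of $\liep$ onto $\liep'$. Your version, invoking the displayed formula directly and the injectivity of $\mathrm{Ad}(g)$, is essentially the same proof in slightly more streamlined form.
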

\begin{proof}
$\mu_{\liep'}(x)=0$ if and only if for any $\xi \in \liep$ we have $\langle (\mu_{\liep'} (x),\mathrm{Ad}(g)(\xi)\rangle =0$ hence if and only if $\langle \mup(g^{-1}x),\xi\rangle =0$ and so the result follows.
\end{proof}
\begin{corollary}
Let $x\in X$. Then
\begin{enumerate}
\item $G\cdot x \cap \mup^{-1}(0)\neq \emptyset$ if and only if $G\cdot x \cap (\mu_{\liep'})^{-1}(0)\neq \emptyset$;
\item  $\overline{G\cdot x} \cap \mup^{-1}(0)\neq \emptyset$ if and only if $\overline{G\cdot x} \cap (\mu_{\liep'})^{-1}(0)\neq \emptyset$.
\end{enumerate}
\end{corollary}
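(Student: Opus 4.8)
The plan is to reduce both equivalences to the immediately preceding lemma, which identifies $\mu_{\liep'}^{-1}(0)$ with the $g$-translate of $\mup^{-1}(0)$. First I would restate that lemma in the form
\[
\mu_{\liep'}^{-1}(0)=g\cd\mup^{-1}(0),
\]
which is immediate since $x\in\mu_{\liep'}^{-1}(0)$ is equivalent to $g^{-1}x\in\mup^{-1}(0)$, i.e. to $x\in g\cd\mup^{-1}(0)$. The two further ingredients I would isolate are: $g\in G$, so that $g\cd(G\cd x)=G\cd x$; and left translation by $g$ is a homeomorphism of $X$, so that $g\cd\overline{G\cd x}=\overline{G\cd x}$.

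For part (a) I would argue as follows. If $h\in G$ satisfies $hx\in\mup^{-1}(0)$, then $(gh)x\in g\cd\mup^{-1}(0)=\mu_{\liep'}^{-1}(0)$ with $gh\in G$; conversely, if $h\in G$ satisfies $hx\in\mu_{\liep'}^{-1}(0)=g\cd\mup^{-1}(0)$, then $(g^{-1}h)x\in\mup^{-1}(0)$ with $g^{-1}h\in G$. This yields $G\cd x\cap\mup^{-1}(0)\neq\emptyset$ if and only if $G\cd x\cap\mu_{\liep'}^{-1}(0)\neq\emptyset$.

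For part (b) I would use the homeomorphism $w\mapsto gw$ of $X$: it carries $\overline{G\cd x}\cap\mup^{-1}(0)$ bijectively onto $g\cd\overline{G\cd x}\cap g\cd\mup^{-1}(0)=\overline{G\cd x}\cap\mu_{\liep'}^{-1}(0)$, using $g\cd\overline{G\cd x}=\overline{G\cd x}$; hence one intersection is nonempty exactly when the other is.

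I do not expect a genuine obstacle: the corollary is essentially bookkeeping once the lemma is in hand. The only point that deserves care — and which the discussion preceding the lemma already supplies — is that the data are symmetric in the two triples, namely that $G=K'\exp(\liep')$ is compatible with $U^\C=U'\exp(i\liu')$ and that $\mu_{\liep'}=\mathrm{Ad}(g)\circ\mup\circ g^{-1}$ is the gradient map attached to $(K',\mu')$, so that the polystable, semistable (and, by Theorem \ref{stable}, stable) conditions relative to the primed triple are meaningful notions to compare with the unprimed ones.
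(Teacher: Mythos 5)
Your proposal is correct and follows exactly the route the paper intends: the paper states this corollary without proof as an immediate consequence of the preceding lemma, and your argument (rewriting the lemma as $\mu_{\liep'}^{-1}(0)=g\cdot\mup^{-1}(0)$ and using that $g\in G$ acts on $X$ as a homeomorphism preserving $G\cdot x$ and $\overline{G\cdot x}$) is precisely the bookkeeping being left to the reader. No gaps.
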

The norm square gradient map depends on the choice of a triple $(U,\mathtt g,\mu)$. Indeed,
\[
f(x) =\frac{1}{2}\langle \mup(x),\mup(x)\rangle.
\]
Let $g\in G$ and let $(U',(g^{-1})^{\star} \mathtt g,\mu')$ another triple. We denote by $f'$ the norm square of $\mu_{\liep'}$. Since $\scalo$ is $\mathrm{Ad}(G)$-invariant, it follows that
$f'(x)=f(g^{-1}x)$. Moreover,  $x$ is a critical point of $f'$ if and only if $g^{-1} x$ is a critical point of $f$. This implies that $K\cdot \beta$ is a critical orbit of $f$ if and only if $K'\cdot \mathrm{Ad}(g)(\beta)$ is a critical orbit of $f'$.  From now on, we assume that $X$ is compact and connected. Then the negative gradient flow line of the norm square is defined in all the real line. Moreover, if $x:\R \lra X$ denotes the negative gradient flow line, then $\lim_{t\mapsto +\infty} x(t)$ exists  \cite{Properties of Gradient map}. It is a straightforward computation that the gradient $\mathrm{grad}\, f' (x)$ with respect to $(g^{-1})^{\star} \mathtt g$ is given by $(\mathrm d g)_{g^{-1}x} \big(\mathrm{grad}\, f (g^{-1} x)\big)$.  Therefore, if $x(t)$ is the negative gradient flow line of $f$, then $g(x(t))$ is the negative gradient flow line of $f'$.

Let $x_\infty=\lim_{t\mapsto +\infty} x(t)$ and let $\beta =\mup(x_\infty)$. Then $\mu_{\liep'}(gx_\infty)=\mathrm{Ad}(g)(\beta)$ and $f(x_\infty)=f'(gx_{\infty})$. By  \cite[Theorem 4.9]{Properties of Gradient map}, the stratum of associated to $K\cdot \beta$ with respect to $f$ coincides with the stratum associated to $K' \cdot \mathrm{Ad}(g)(\beta)$ with respect to $f'$. Summing up we have proved the following result.
\begin{theorem}
If $X$ is connected and compact,
then the decomposition of $X$ in strata associated to critical orbits of the norm square gradient map does not depend on the triple chosen.
\end{theorem}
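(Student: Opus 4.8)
The plan is to assemble the facts established in the discussion preceding the statement and organize them around the flow-theoretic description of the stratification. Recall from \cite[Theorem 4.9]{Properties of Gradient map} that, for a fixed triple, the decomposition $X=\bigsqcup S_{K\cdot\beta}$ is indexed by the critical orbits $K\cdot\beta$ of $\mup$, and the stratum $S_{K\cdot\beta}$ is described through the negative gradient flow of the norm square $f$: a point $y$ lies in $S_{K\cdot\beta}$ exactly when the negative gradient flow line $x(t)$ through $y$ has limit $x_\infty=\lim_{t\to+\infty}x(t)$ with $\mup(x_\infty)\in K\cdot\beta$. Since $X$ is assumed compact, this flow is complete and the limits $x_\infty$ exist by \cite{Properties of Gradient map}, so the description is simultaneously available for every triple $(K',(g^{-1})^{\star}\mathtt g,\mu_{\liep'})$, $g\in G$.

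First I would record the elementary identities already noted above. Because $B$ is $\mathrm{Ad}(G)$-invariant and $\mu_{\liep'}=\mathrm{Ad}(g)\circ\mup\circ g^{-1}$, one has $f'(x)=f(g^{-1}x)$; hence $x$ is a critical point of $f'$ iff $g^{-1}x$ is a critical point of $f$, and $K\cdot\beta$ is a critical orbit of $f$ iff $K'\cdot\mathrm{Ad}(g)(\beta)$ is a critical orbit of $f'$, where $K'=gKg^{-1}$. Next I would invoke the computation, also given above, that $\mathrm{grad}\,f'$ taken with respect to $(g^{-1})^{\star}\mathtt g$ is the pushforward under $g$ of $\mathrm{grad}\,f$. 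Consequently the diffeomorphism $g:X\to X$ sends negative gradient flow lines of $f$ to negative gradient flow lines of $f'$: if $x(t)$ is the flow line of $f$ through $y$ with limit $x_\infty$, then $g\circ x(t)$ is the flow line of $f'$ through $gy$ with limit $gx_\infty$, and $\mu_{\liep'}(gx_\infty)=\mathrm{Ad}(g)(\mup(x_\infty))$.

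Combining these observations, $g$ carries the stratum of $f$ associated with $K\cdot\beta$ bijectively onto the stratum of $f'$ associated with $K'\cdot\mathrm{Ad}(g)(\beta)$; together with the identification of strata by their indexing critical orbits from \cite[Theorem 4.9]{Properties of Gradient map}, this shows the two stratifications agree under the natural relabelling $K\cdot\beta\mapsto K'\cdot\mathrm{Ad}(g)(\beta)$ of critical orbits, and transitivity of this comparison over all $g\in G$ yields the asserted independence. Since each ingredient is already in place, there is no genuine obstacle; the step I rely on most heavily is the invocation of \cite[Theorem 4.9]{Properties of Gradient map}, which is what guarantees that a stratum is \emph{determined} by the flow-limit data, and the only point to handle with care is making precise the sense in which the decomposition is "the same" — one must match not merely the partitions of $X$ but also the labelling critical orbits, so that the relabelling $K\cdot\beta\mapsto K'\cdot\mathrm{Ad}(g)(\beta)$ is understood as part of the statement.
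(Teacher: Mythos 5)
Your proposal is correct and follows essentially the same route as the paper: the identities $f'(x)=f(g^{-1}x)$ and $\mathrm{grad}\,f'=(\mathrm d g)\circ\mathrm{grad}\,f\circ g^{-1}$, the resulting correspondence of negative gradient flow lines and their limits, and the final appeal to \cite[Theorem 4.9]{Properties of Gradient map} to match the stratum of $K\cdot\beta$ for $f$ with that of $K'\cdot\mathrm{Ad}(g)(\beta)$ for $f'$. The only point you leave implicit (as does the paper) is that upgrading ``$g$ maps the $f$-stratum onto the $f'$-stratum'' to ``the two partitions of $X$ coincide'' uses the $G$-invariance of the strata contained in that cited theorem.
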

%\begin{lemma}\label{limit of semistable point}
%Suppose a symplectization $\sigma$ of a $G$-action on $X$ is energy complete and let $x\in X.$ Let $\beta\in\mathfrak{p}\backslash\{0\}$ be such that $\la(x,\beta) \geq 0$ and $\lim_{t\to \infty}\text{exp}(t\beta)x = y.$ Then $\la(y,\beta)\geq 0.$
%\end{lemma}
%\begin{proof}
%Let $\beta\in\mathfrak{p}\backslash\{0\}$ be such that $\la(x,\beta) \geq 0.$
%By energy completeness; $$\lim_{t\to \infty}\text{exp}(t\beta)x = y$$ exists and $y\in \overline{G\cdot x}$. By Lemma \ref{} of the thesis, $\beta_X(y) = 0$ and by continuity of $\mu_\liep,$ 
%\begin{align*}
 %   \la(y,\beta) &= \lim_{t\to \infty}\langle \mu_\mathfrak{p}(\text{exp}(t\beta)y), \beta\rangle\\
 %   &= \lim_{t\to \infty}\langle \mu_\mathfrak{p}(\text{exp}(t\beta)\lim_{s\to \infty}\text{exp}(s\beta)x), \beta\rangle\\
 %   &= \lim_{s\to\infty}\lim_{t\to \infty}\langle \mu_\mathfrak{p}(\text{exp}((s+t)\beta)x, \beta\rangle\\
%    &\geq 0.
%\end{align*}
%\end{proof}
%Lemma \ref{limit of semistable point} above can be generalized as follow:
%\begin{lemma}
%Suppose a symplectization $\sigma$ of a $G$-action on $X$ is energy complete and let $x\in X.$ Let $\beta\in\mathfrak{p}\backslash\{0\}$ be such that $\la(x,\beta) \geq 0.$ Then for any $y\in \overline{G\cdot x},$ there exists $\xi\in\mathfrak{p}\backslash\{0\}$ with $\la(x,\xi) \geq 0.$ 
%\end{lemma}

%\begin{proof}
%Proof to be included.
%\end{proof}

\end{document}